\newtheorem{thm}[equation]{Theorem}
\newtheorem{prop}[equation]{Proposition}
\newtheorem{cor}[equation]{Corollary}
\newtheorem{lemma}[equation]{Lemma}
\numberwithin{equation}{section}
\newcommand{\Q}{\mathbb Q}
\newcommand{\Z}{\mathbb Z}
\newcommand{\R}{\mathbb R}
\newcommand{\C}{\mathbb C}
\newcommand{\G}{\mathfrak G}
\newcommand{\A}{\mathbb A}
\def\Hom{{\rm Hom}}
\def\G{{\rm G}}
\def\SL{{\rm SL}}
\def\GSp{{\rm GSp}}
\def\Sp{{\rm Sp}}
\def\U{{\rm U}}
\def\GL{{\rm GL}}
\def\SO{{\rm SO}}
\def\Sp{{\rm Sp}}
\def\O{{\rm O}}
\def\A{{\mathbb A}}
\def\R{{\mathbb R}}
\def\Z{{\mathbb Z}}
\def\C{{\bf C}}
\def\C{{\mathbb C}}
\def\G{{\mathbb G}}
\title[Globalization over Function Fields]{Globalization of supercuspidal representations over function fields and applications}
\author{Wee Teck Gan}
\address{Department of Mathematics, National University of Singapore, Block S17, 10 Lower Kent Ridge Road, Singapore 119076}
\email{matgwt@nus.edu.sg}
\author{Luis Lomel\'i}
\address{Max-Planck-Institut fur Mathematik,  Vivatsgasse 7, 53111 Bonn, Germany}
\email{lomeli@mpim-bonn.mpg.de}
\subjclass[2010]{Primary:11F70; Secondary: 22E55}
\keywords{Globalization, supercuspidal representations, function fields, local Langlands correspondence}
\begin{document}
\maketitle

\begin{abstract}
Let $H$ be a connected reductive group defined over a non-archimedean local field $F$ of characteristic $p>0$. Using Poincar\'e series, 
we globalize supercuspidal representations of $H_F$ in such a way that we have control over ramification at all other places, and  
such that the notion of distinction with respect to a unipotent subgroup (indeed more general subgroups) is preserved.
In combination with the work of Vincent Lafforgue on the global Langlands correspondence, 
we present some applications, such as the stability of Langlands-Shahidi $\gamma$-factors and the local Langlands correspondence for classical groups.
\end{abstract}

\section{\bf Statement of Results}
In this  paper, we present a useful globalization result for supercuspidal representations over a non-archimedean local field of characteristic $p>0$.
  
\vskip 5pt
\begin{thm}  \label{T:main}
Suppose we are given the following data:
\vskip 5pt

\begin{itemize}
\item $k = \mathbb{F}_q(Y)$ is the global function field of an absolutely irreducible smooth projective curve $Y$ over a finite field $\mathbb{F}_q$, with associated ring of adeles $\A$;
\vskip 5pt

\item  $S_0$ is a nonempty finite set of places of $k$;
\vskip 5pt

\item $H$ is a smooth connected reductive group over $k$ with $Z$ the identity component of its center;
\vskip 5pt

\item $N \subset H$ is a (possibly trivial) smooth connected $k$-split unipotent subgroup over $k$;
\vskip 5pt

\item $\chi = \prod_v \chi_v :  N(\A)  \longrightarrow \C^{\times}$ is a (possibly trivial) unitary character trivial on $N(k)$; 
\vskip 5pt

 \item  $\omega = \prod_v  \omega_v$ is a character of $Z(k) \backslash Z(\A)$;  
\vskip 5pt

\item for each $v_0 \in S_0$, $\pi_{v_0}$ is a supercuspidal  representation of $H(k_{v_0})$  which is $(Z(k_{v_0}),\omega_{v_0})$- and  $(N(k_{v_0}), \chi_{v_0})$-distinguished, 
i.e. satisfying 
\[  \Hom_{Z(k_{v_0}) \cdot N(k_{v_0})}(\pi_{v_0},  \omega_{v_0} \otimes \chi_{v_0}) \ne 0. \]
\vskip 5pt

\end{itemize}
\vskip 5pt

\noindent  
 Then there exists a cuspidal representation $\Pi$ of $H(\A)$ satisfying:
\vskip 5pt

\begin{itemize}
\item[(i)]  for all $v_0 \in S_0$, $\Pi_{v_0}  \cong \pi_{v_0}$;
\vskip 5pt

\item[(ii)]  for all $v \notin S_0$,  $\Pi_v$ is a constituent of a principal series representation induced from a minimal parabolic subgroup of $H_v$ and whose restriction to the derived group $H_v^{der}$ has depth $0$; 
\vskip 5pt

 \item[(iii)]  $\Pi$ has central character $\omega$ and nonzero automorphic $(N, \chi)$-period.
\end{itemize}
\end{thm}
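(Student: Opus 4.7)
The plan is to construct $\Pi$ as an irreducible constituent of the automorphic representation generated by an appropriate Poincar\'e series. At each $v_0 \in S_0$, the distinction hypothesis yields a nonzero form
$\ell_{v_0}\in \Hom_{Z(k_{v_0})N(k_{v_0})}(\pi_{v_0},\omega_{v_0}\otimes\chi_{v_0})$; pick $\xi_{v_0}\in\pi_{v_0}$ with $\ell_{v_0}(\xi_{v_0})\ne 0$ and set
\[
W_{v_0}(g) \;=\; \ell_{v_0}\bigl(\pi_{v_0}(g)\,\xi_{v_0}\bigr).
\]
By supercuspidality of $\pi_{v_0}$, this is compactly supported modulo $Z(k_{v_0})N(k_{v_0})$ and transforms by $\omega_{v_0}\otimes\chi_{v_0}$ on the left. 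At each $v\notin S_0$, choose a minimal $k_v$-parabolic $B_v$ containing $N$, extend $\omega_v$ to a depth-zero character of a maximal torus, and let $W_v$ be a smooth $(ZN,\omega_v\otimes\chi_v)$-equivariant function on $H(k_v)$ that is right-invariant under an Iwahori-type subgroup $K_v\subset H(k_v)$ and supported in a small neighborhood of the identity inside the open Bruhat cell.

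With $W=\prod_v W_v$, form the Poincar\'e series
\[
P(g)\;=\;\sum_{\gamma\in Z(k)N(k)\backslash H(k)} W(\gamma g).
\]
Since $W$ has compact support modulo $Z(\A)N(\A)$ and is right-invariant under a compact open $K\subset H(\A)$, the sum is locally finite, so $P$ is a well-defined smooth function on $H(k)\backslash H(\A)$ of central character $\omega$. Unfolding the $(N,\chi)$-period yields a sum of local integrals indexed by a Bruhat-type double coset space; by tightening the supports of the $W_v$ for $v\notin S_0$ we kill all but the trivial double coset for $g$ in a neighborhood of the identity, and the surviving product factorizes into local factors. The $v_0$-factors are nonzero because $\ell_{v_0}(\xi_{v_0})\ne 0$, and the auxiliary factors are nonzero by construction, so $P\not\equiv 0$ and has nonzero $(N,\chi)$-period.

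Let $\Pi$ be an irreducible subquotient of the $H(\A)$-subrepresentation of $\mathcal{A}(H)$ generated by $P$ whose local component at each $v_0\in S_0$ is isomorphic to $\pi_{v_0}$; such a subquotient exists because the Hecke translates of $W_{v_0}$ realize $\pi_{v_0}$ via its matrix coefficient, and $\pi_{v_0}$ is projective and injective in the relevant category of smooth representations with fixed central character. Cuspidality of $\Pi$ is forced by supercuspidality at any single $v_0$: the constant term along any proper parabolic generates an $M(\A)$-representation whose $v_0$-local component is a quotient of the Jacquet module of $\pi_{v_0}$, which is zero. Condition (i) then holds by construction, (iii) follows from the nonvanishing step together with cuspidality, and (ii) holds at each $v\notin S_0$ because the right-$K_v$-invariance and Bruhat-cell support of $W_v$ pin $\Pi_v$ to be a depth-zero constituent of a minimal-parabolic principal series.

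The main technical obstacle is the nonvanishing step: local distinction at $v_0$ does not by itself prevent destructive interference in the global summation over $Z(k)N(k)\backslash H(k)$, and one must choose the auxiliary $W_v$ with supports tight enough to isolate the identity double coset in the unfolded period. This relies on the unrestricted freedom to shrink compact open subgroups at non-archimedean places, a flexibility that is unconditional in the function-field setting and underlies the whole construction.
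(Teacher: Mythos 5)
Your overall strategy (Poincar\'e series, supercuspidality killing constant terms, pro-$p$-Iwahori-fixed vectors pinning down $\Pi_v$ for $v\notin S_0$) is the same as the paper's, but two steps in your version have genuine gaps.

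First, the local input at $v_0\in S_0$. You sum translates of $W_{v_0}(g)=\ell_{v_0}(\pi_{v_0}(g)\xi_{v_0})$ and assert that supercuspidality makes this compactly supported modulo $Z(k_{v_0})N(k_{v_0})$. That is not true in the stated generality: for arbitrary $(N,\chi)$ the functional $\ell_{v_0}$ need not even be smooth (e.g.\ when $N$ is trivial, $\Hom_N(\pi,\chi)$ is the full algebraic dual), and compact support mod $ZN$ of generalized period functions is a special feature of the Whittaker setting, not of supercuspidality alone. The paper avoids this entirely by taking $f_{v_0}$ to be an honest matrix coefficient $\langle w_{v_0}^{\vee},h\cdot w_{v_0}\rangle$ (compactly supported mod center for supercuspidals) and proving, via the projection of $C_c^{\infty}(H(k_{v_0}))$ onto its $\pi_{v_0}\otimes\pi_{v_0}^{\vee}$-isotypic summand, that the vectors can be chosen so that $\int_{N(k_{v_0})}\chi(u)^{-1}\langle w_{v_0}^{\vee},u\cdot w_{v_0}\rangle\,du\ne 0$. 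You should restructure the $S_0$-local data this way; the distinction hypothesis enters only through this integral.

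Second, and more seriously, the nonvanishing step. You correctly identify that one must kill all double cosets except the trivial one in the unfolded $(N,\chi)$-period, but the mechanism you propose --- shrinking the supports of the $W_v$ at places outside $S_0$ --- cannot do this by itself. By weak approximation, $H(k)$ meets any nonempty open subset of $\prod_{v\in S}H(k_v)$ for a finite set $S$, so no amount of local shrinking at finitely many places excludes nontrivial rational points; and at almost all places you are forced to take $W_v$ spherical, so there is nothing to shrink there. The paper's Lemma 2.1 is the genuinely global input you are missing: after embedding $H\hookrightarrow\GL_n$, one imposes integrality of the matrix entries $x_{ij}(\gamma)$ outside a finite set, bounded pole orders at $S_0\cup S$, congruences forcing $\gamma$ to be unipotent upper-triangular modulo $\varpi_{v_1}$ at an auxiliary place $v_1$ (via the pro-$p$ Iwahori) and integrality at a second auxiliary place $v_2$ (via the opposite Iwahori), and then invokes the fact that a principal divisor on the projective curve $Y$ has degree zero to conclude $x_{ij}(\gamma)=0$ for $i>j$, $x_{ii}(\gamma)=1$, and finally $\gamma\in N(k)$. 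Without this product-formula argument (or an equivalent), the claim that only the identity coset survives is unsupported, and the proof of nonvanishing of the period does not go through. The remaining points (cuspidality, extraction of $\Pi$ from the spectral decomposition, depth zero at $v\notin S_0$ via Morris and Moy--Prasad, and the careful matching of $\omega_v$ with the chosen compact subgroups in the reductive case) are essentially as in the paper once these two gaps are closed.
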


  \vskip 5pt
 We make a few remarks:
 \vskip 5pt
 \begin{itemize}
 \item[-]   If $N$ is trivial, then the local condition and global conclusion about $(N,\chi)$ are vacuous and thus $(N,\chi)$ can be suppressed.  
 \vskip 5pt
 
 \item[-] If $H$ is quasi-split over $k$, $N$ is a maximal unipotent subgroup of $H$ and $\chi$ is a generic character, then (iii) says that $\Pi$ is globally $\chi$-generic. Moreover, (ii) implies that for $v \notin S_0$, $\Pi_v$ is induced from  the Borel subgroup.
 \vskip 5pt
 
 \item[-] One has to be careful in working with unipotent subgroups over non-perfect fields, even if one is working with smooth connected groups, as these may not be $k$-split (i.e. successive extensions of the additive group $\mathbb{G}_a$). For these subtleties, the reader can consult \cite[Chap. 5]{O} or \cite[Appendix B]{CGP}. In this paper, we shall only consider smooth connected $k$-split unipotent groups and these are isomorphic to affine spaces as algebraic varieties.
 We  shall abbreviate the terminology by simply referring to these as unipotent groups and this abbreviation will be used without further comment.
   \end{itemize}

\vskip 5pt
  We should mention that the cuspidal representation in Theorem \ref{T:main}  is constructed by means of Poincar\'e series. Such globalization results were first proved by Henniart [H] and extended by Vigneras \cite{V} (over arbitrary global fields) and Shahidi \cite{Sh2} (over number fields). A recent preprint of Moy-Mui\'c \cite{MM} further refines this series of results over number fields, allowing one to globalize nonsupercuspidal representations (under certain hypotheses). There is also an analogous globalization result due to S.W. Shin over totally real fields \cite{Shin} proved using the Arthur trace formula. However, in all these versions, one loses control of the local component of the cuspidal representation at one place of $k$, typically an archimedean place. Our Theorem, on the other hand, gives rather good control at all places.  The proof of our theorem is inspired by \cite[Theorem 3.3]{HL3}, which is a special case of Theorem~\ref{T:main} in the context of $\GL_n$. The slight improvement over the treatment in \cite[Theorem 3.3]{HL3}  is that we make no use of the fact/hypothesis that supercuspidal representations can be constructed by compact induction. For the case of generic representations of quasi-split reductive groups mentioned in the remark above, a proof can also be found in  \cite[\S4.1] {L2}.  \vskip 5pt

The following corollary  of Theorem \ref{T:main} is useful  in practice.
\vskip 5pt
 
\begin{cor}  \label{T:main2}
Let $F$ be a local field of characteristic $p >0$ and let $H_F$ be a connected reductive group over $F$ with $Z_F$ the identity component of its center and $N_F$ the unipotent radical of a parabolic $F$-subgroup $P_F = M_F \cdot N_F$ (so $N_F$ is possibly trivial here). Assume that  $\chi_F$ is a unitary character of $N_F$ which lies in an open $M_F$-orbit.
\vskip 5pt

Suppose that $\pi_1, \ldots ,\pi_a$ is a collection of supercuspidal representations of $H_F$ which have the same central character under $Z_F$ and  which are  $(N_F, \chi_F)$-distinguished (and hence distinguished with respect to any character in the same $M_F$-orbit as $\chi_F$). Then there exist 
\vskip 5pt
\begin{itemize}
\item a global function field $k$, with a finite set $\left\{v_1,\ldots,v_a\right\}$ of places and isomorphisms $k_{v_i} \cong F$; 
\vskip 5pt

\item  a connected reductive $k$-group $H$ with isomorphisms $H_{v_i} \cong H_F$,
 containing a parabolic $k$-subgroup $P = M \cdot N$ such that $P_{v_i} \cong P_F$;
 \vskip 5pt
 
 \item a unitary character $\chi$ of $N(\A)$ trivial on $N(k)$ such that $\chi_{v_i}$ and  $\chi_F$   lie in the same $M_F$-orbit for each $i$;
\vskip 5pt

\item a cuspidal automorphic representation $\Pi$ of $H(\A)$ which is globally $(N, \chi)$-distinguished, with  $\Pi_{v_i}  \cong \pi_i$ for $i=1,\ldots,a$, and with $\Pi_v$ contained in a  principal series representation induced from a minimal parabolic subgroup for all other $v$, such that $\Pi_v$ is of depth $0$ when restricted to $H_v^{der}$. 
\end{itemize}
\end{cor}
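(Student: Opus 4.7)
The plan is to reduce the corollary to Theorem~\ref{T:main} by globalizing the purely local data $(F, H_F, P_F, \chi_F)$ to global data $(k, H, P, \chi)$ that match the local data at $a$ chosen places, and then invoking the theorem with $S_0 = \{v_1, \ldots, v_a\}$.

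For the field, since $F$ has positive characteristic, $F \cong \mathbb{F}_q((t))$ for some finite field, so set $k = \mathbb{F}_q(T)$ and choose $a$ distinct $\mathbb{F}_q$-rational points $v_1, \ldots, v_a$ of $\mathbb{P}^1_{\mathbb{F}_q}$; each completion $k_{v_i}$ is canonically isomorphic to $F$. To globalize $(H_F, P_F)$, one first realizes the quasi-split form $H^*$ of $H_F$, together with its standard parabolic of the same type as $P_F$, over $k$: this amounts to choosing a based root datum with an action of $\Gal(\ell/k)$, where $\ell/k$ is a finite separable extension whose completion at $v_1$ realizes the splitting field of $H_F$ over $F$, and such an $\ell$ exists by the standard construction of Galois covers of $\mathbb{P}^1$ with prescribed ramification (unramified outside the $v_i$ and one auxiliary place). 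Then $H_F$ is an inner twist of $H^*_F$ classified by a class $\xi_F \in H^1(F, H^*_{\mathrm{ad}})$, and using the theorems of Harder on Galois cohomology of reductive groups over global function fields, one produces a global class $\xi \in H^1(k, H^*_{\mathrm{ad}})$ whose image in $H^1(k_{v_i}, H^*_{\mathrm{ad}})$ is $\xi_F$ for every $i$. Twisting $H^*$ by $\xi$ yields the required $H$ with $P = MN$.

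For the character, build $\chi = \psi \circ \lambda$, where $\psi$ is a nontrivial character of $\A/k$ and $\lambda : N(\A) \to \A$ is a $k$-linear functional arising from a suitable element of the character lattice of the abelianization of $N$. Because $\chi_F$ lies in an open $M_F$-orbit, the open orbit is a Zariski-dense subset of the character variety of $N_F$, so $\lambda$ can be chosen so that each $\chi_{v_i}$ lies in the same $M_F$-orbit as $\chi_F$. Since $(N_F, \chi_F)$-distinction is preserved under $M_F$-conjugation of the character, each $\pi_i$ is $(N(k_{v_i}), \chi_{v_i})$-distinguished. Take $\omega$ to be any Hecke character of $Z(k) \backslash Z(\A)$ whose local component at each $v_i$ equals the common central character of the $\pi_i$'s; such an $\omega$ exists by Pontryagin duality for the locally compact abelian group $Z(k) \backslash Z(\A)$. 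All hypotheses of Theorem~\ref{T:main} are now in place, and the theorem produces the desired cuspidal $\Pi$.

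The main obstacle is the group-theoretic globalization: realizing the prescribed local inner form at the $a$ places $v_i$ as the localization of a global form, while retaining the ramification control at the non-distinguished places that Theorem~\ref{T:main} requires. This rests on the Hasse principle for $H^1$ of adjoint groups over function fields; for classical and quasi-split groups it is straightforward, while for general inner forms one must invoke the stronger results of Harder and Borel--Prasad.
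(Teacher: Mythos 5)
Your overall strategy is the same as the paper's: globalize the field, the group, the character of $N$ and the central character, then invoke Theorem \ref{T:main} with $S_0 = \{v_1,\ldots,v_a\}$ (the paper actually globalizes at a single place and then base-changes to a field in which that place splits into $a$ places, but that difference is cosmetic). However, there is a genuine gap in your treatment of the central character. You assert that a Hecke character $\omega$ of $Z(k)\backslash Z(\A)$ with prescribed local components at the $v_i$ "exists by Pontryagin duality." This is false in general: Pontryagin duality lets you extend a character from a \emph{closed} subgroup, but $Z(k_{v_0})$ need not embed as a closed subgroup of $Z(k)\backslash Z(\A)$. The paper's remark after Lemma \ref{L:char} gives the counterexample: if $Z$ is anisotropic over $k$ but split at $v_0$, then $Z(k)\backslash Z(\A)$ is compact, so its character group is discrete, while $Z(k_{v_0}) \cong \mathcal{O}_{v_0}^{\times}\times\Z$ has a continuous family of characters; hence not every local character globalizes. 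To repair this one must arrange, during the globalization of the group, that the split $k$-rank of $Z$ equals the split $F$-rank of $Z_F$ (this is why the paper's group-globalization lemma tracks the rank of the center), and then run the argument of Lemma \ref{L:char}: extend the character from a compact open subgroup of $Z(k)\backslash Z(\A)$ and correct the discrepancy on the non-compact directions by twisting with characters of the form $|-|_{\A}^{s_1}\times\cdots\times|-|_{\A}^{s_m}$ via a surjection $Z \to \G_m^m$ with $v_0$-anisotropic kernel. Your construction of the splitting field $\ell$ would in fact force the rank condition, but you neither notice that it is needed nor supply the extension argument.

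A secondary issue: when you twist the quasi-split form $H^*$ by a class $\xi \in H^1(k, H^*_{\mathrm{ad}})$ with prescribed localizations, there is no guarantee that the resulting inner form contains a parabolic $k$-subgroup of the type of $P_F$ (inner twisting can destroy parabolics). The paper addresses this by working with the cohomology of the pair, i.e.\ by proving surjectivity of $H^1(k, P'_{\mathrm{ad}}) \to H^1(F, P'_{F,\mathrm{ad}})$, reducing via the Levi to the semisimple part and citing Th\v{a}\'ng--T\^an for the surjectivity in positive characteristic. Your proposal should twist by a class coming from $H^1$ of (the adjoint image of) the parabolic, not just of $H^*_{\mathrm{ad}}$. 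Finally, for the character of $N$, Zariski density of $W_k$ in $W$ only places $\chi_{v_i}$ in the open orbit \emph{variety}; to land in the same $M_F(F)$-orbit as $\chi_F$ you should use that the orbit is open in the $v_i$-adic topology together with weak approximation for the affine space $W$.
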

%\item  
\vskip 5pt

  The main point of the corollary is that only local data is given, and so one needs to globalize several objects (such as the field, the various groups and the various characters) before one is in a position to apply Theorem \ref{T:main}. 
  Moreover, if we set $W_F = \Hom_F(N_F, \G_a)$ and fix a nontrivial character $\psi_F$ of $F$, then composition with $\psi_F$ identifies the $F$-vector space $W_F$ with the set of unitary characters of $N_F$.  Thus $\chi_F$ 
  is an element of $W_F$ and we are requiring in the corollary that its $M_F$-orbit is Zariski open in $W_F$.
  \vskip 5pt

Using the globalization of supercuspidal representations such as given by Theorem \ref{T:main} and Corollary \ref{T:main2}, the second author has completed the Langlands-Shahidi theory in positive characteristic \cite{L1,L2},  following Shahidi's work \cite{Sh2} in characteristic zero. In particular, one has a characterization of the Langlands-Shahidi gamma factors for generic representations of quasi-split groups over function fields by the usual properties: 
multiplicativity, compatibility with class field theory in the case of tori and global functional equations. Special cases of this characterization over function fields were shown in 
\cite{HL1,HL2,HL3,GL}. We refer the reader to \cite{L2} for the general results.
\vskip 5pt

When one combines Theorem \ref{T:main}  with the Langlands-Shahidi theory and  the recent work \cite{La} of V. Lafforgue on the global Langlands correspondence over function fields, one can obtain further applications.
Let us highlight some of these here:
 \vskip 5pt
 
\begin{itemize}
\item  In Theorem \ref{T:stability}, we show the stability of general Langlands-Shahidi gamma factors in positive characteristic.  
\vskip 5pt

\item  In Theorem \ref{T:plan},  we express the Plancherel measure (associated with parabolic induction) in terms of Galois theoretic gamma factors.   

\vskip 5pt

\item Building upon these results, and appealing to the work of V. Lafforgue, L. Lafforgue, Deligne and others, we can attach local L-parameters to supercuspidal representations of quasi-split classical groups under a working hypothesis (see \S \ref{SS:hypo} and Theorem \ref{T:L}).  
\vskip 5pt

\item Our results on Plancherel measure (together with a result of Silberger) also allow us to verify the basic assumption (BA) in the work of Moeglin-Tadi\'c on the classification of discrete series representations of quasi-split classical groups in terms of supercuspidal ones. As a consequence, we can extend the local Langlands correspondence for supercuspidal representations obtained above to all discrete series representations, and then to all irreducible smooth representations by Langlands classification; see Theorem \ref{T:L2}. 
\end{itemize}

\vskip 5pt

We will discuss these various applications in \S \ref{S:applications}, \S \ref{S:plan} and  \S \ref{S:LLC}  respectively.  
In  \S \ref{S:LLC2}, we discuss another approach to extending the local Langlands correspondence of classical groups from supercuspidal representations to discrete series representations, using a (conjectural) simple form of the Arthur-Selberg trace formula.  
The main point is to globalize a discrete series representation (in the style of our main theorem), but the Poincar\'e series argument does not apply, which necessitates the use of the trace formula. We hope that this application will provide some impetus for the systematic development of the local theory of invariant harmonic analysis and the global theory of the trace formula in positive characteristic.

\vskip 10pt

The applications above are all obtained via  a global-to-local argument, using a globalization result of the type in the theorem. As we mentioned above, in such globalization, one often loses control at some place of $k$.  In characteristic 0, one sacrifices the archimedean places, and so one needs to have already established the desired theorem at archimedean places by purely local means. The local proof of the archimedean theorem could be highly nontrivial but is thankfully 
 more attainable than the nonarchimedean one. In the context of the Langlands-Shahidi theory in characteristic 0, this archimedean input was provided by Shahidi \cite{Sh1}. The main stumbling block preventing the development of the Langlands-Shahidi theory  in positive characteristic was the constraint  that  one cannot sacrifice any local place, since the desired result is not known at any place. Hence, it would appear that this situation is one of the few instances where having some archimedean places is a blessing instead of a curse, which is quite contrary to the general principle that function  fields are easier to handle than number fields because of a rich underlying geometry. Another such instance is the state of the Arthur-Selberg trace formula. With Theorem \ref{T:main}, however,   we remove the previous constraint  and there is no longer a need to sacrifice any place over a function field. So the globalization of supercuspidal representations  over function fields turns out to be easier to handle than number fields.

\vskip 5pt

Finally, we show a variant of Theorem \ref{T:main}, which is a refinement in positive characteristic of a theorem of D. Prasad and R. Schulze-Pillot \cite[Theorem 4.1]{PSP} on globalizing supercuspidal representations  that are distinguished with respect to a given closed algebraic subgroup (which is not necessarily unipotent):
\vskip 5pt

\begin{thm} \label{T:main3}
Suppose we are given the following data:
\vskip 5pt

\begin{itemize}
\item $k = \mathbb{F}_q(Y)$ is the global function field of an absolutely irreducible smooth projective curve $Y$ over a finite field $\mathbb{F}_q$, with associated ring of adeles $\A$;
\vskip 5pt

\item  $S_0$ is a nonempty finite set of places of $k$;
\vskip 5pt

\item $H$ is a connected reductive group over $k$, with $Z$ the identity component of its center;
\vskip 5pt

\item $R \subset H$ is a closed algebraic $k$-subgroup containing $Z$ and such that $R/Z$ has no nontrivial $k$-rational characters;
\vskip 5pt

\item $\chi = \prod_v \chi_v :  R(\A)  \longrightarrow \C^{\times}$ is a (possibly trivial) unitary character trivial on $R(k)$; 
\vskip 5pt

\item for each $v_0 \in S_0$, $\pi_{v_0}$ is a supercuspidal  representation of $H(k_{v_0})$  which is $(R(k_{v_0}),  \chi_{v_0})$-distinguished, i.e.
satisfying $\Hom_{R(k_{v_0})}(\pi_{v_0}, \chi_{v_0}) \ne 0$.
\vskip 5pt

\end{itemize}
\vskip 5pt

We make the following two technical assumptions:
\vskip 5pt

\begin{itemize}
\item[(a)]  there is a semisimple algebraic representation $\iota:  H \longrightarrow \GL(V)$ defined over $k$ such that $R$ is the stabilizer of a vector $x_0 \in V(k)$ and ${\rm Lie}(R)$ is the infinitesimal stabilizer of $x_0$;

\item[(b)]  for all places $v \notin S_0$, there exists an Iwahori subgroup $I^{der}_v$ of $H^{der}(k_v)$ with pro-$p$ radical $J^{der}_v$ such that $\chi_v$ is trivial on $R(k_v) \cap J^{der}_v$. 
\end{itemize}
\noindent    Then there exists a cuspidal representation $\Pi$ of $H(\A)$ satisfying:
\vskip 5pt

\begin{itemize}
\item[(i)]  for all $v_0 \in S_0$, $\Pi_{v_0}  \cong \pi_{v_0}$;
\vskip 5pt

\item[(ii)]  for all $v \notin S_0$,  $\Pi_v$ belongs to a  principal series representations induced from a minimal parabolic subgroup and  has depth $0$ when restricted to $H_v^{der}$;
\vskip 5pt

\item[(iii)]  $\Pi$ has nonzero automorphic $(R, \chi)$-period.
\end{itemize}

\end{thm}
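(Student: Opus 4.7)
The plan is to adapt the Poincar\'e series construction underlying Theorem~\ref{T:main} to the nonunipotent subgroup $R$, importing the geometric input of Prasad--Schulze-Pillot. Hypothesis (a) gives an $H$-equivariant embedding $R\backslash H \hookrightarrow V$ via $\gamma R \mapsto \gamma^{-1}\cdot x_0$, from which the $H(k)$-orbit of $x_0$ is discrete in $V(\A)$ and has finite intersection with any compact subset; the infinitesimal stabilizer condition in (a) further ensures that the set-theoretic stabilizer of $x_0$ in $H(k)$ is exactly $R(k)$ (a subtlety in characteristic $p$ that need not hold automatically). The assumption that $R/Z$ has no nontrivial $k$-rational characters, together with $Z\subset R$, makes $R(k)Z(\A)\backslash R(\A)$ of finite volume, so the $(R,\chi)$-period integral against a cusp form converges absolutely. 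These two facts replace the built-in features of unipotent groups exploited in Theorem~\ref{T:main}.

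At each $v_0 \in S_0$, I would choose $f_{v_0}$ to be a matrix coefficient of $\pi_{v_0}$ (hence compactly supported modulo $Z(k_{v_0})$ by supercuspidality) such that
\[
\int_{Z(k_{v_0})\backslash R(k_{v_0})} f_{v_0}(r)\chi_{v_0}(r)^{-1}\,dr \;\neq\; 0,
\]
which is possible since $\pi_{v_0}$ is $(R(k_{v_0}),\chi_{v_0})$-distinguished, so such an integral realizes the nonzero pairing between a matrix coefficient and the distinction functional. At each $v\notin S_0$, I would use hypothesis (b) to fix $I_v^{der}\subset H^{der}(k_v)$ and its pro-$p$ radical $J_v^{der}$, and take $f_v$ to be supported in a single $Z(k_v)J_v^{der}$-double coset, transforming by an appropriate depth-zero character compatible with $\chi_v$ on $R(k_v)\cap J_v^{der}$. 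This Iwahori-type choice forces any irreducible representation in the eventual spectrum with nonzero $f_v$-component to be an Iwahori-spherical constituent of a principal series from a minimal parabolic subgroup and to have depth zero on $H_v^{der}$, as required by (ii).

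Form $f = \prod_v f_v$ (with central character $\chi|_{Z(\A)}$) and the Poincar\'e series
\[
P_f(g) \;:=\; \sum_{\gamma \in Z(k)\backslash H(k)} f(\gamma g).
\]
As in the argument for Theorem~\ref{T:main}, the matrix coefficient choice at any $v_0 \in S_0$ makes $P_f$ a cusp form. Its $(R,\chi)$-period $\mathcal{P}(g) := \int_{R(k)Z(\A)\backslash R(\A)} P_f(rg)\chi(r)^{-1}\,dr$ unfolds formally to
\[
\mathcal{P}(e) \;=\; \sum_{\gamma \in R(k)\backslash H(k)}\ \prod_v \int_{Z(k_v)\backslash R(k_v)} f_v(\gamma r)\chi_v(r)^{-1}\,dr,
\]
and the identity coset contributes the nonzero product of the local integrals just arranged.

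The main obstacle, and the place where hypothesis (a) is decisive, is to kill all cosets $\gamma\neq 1$ so that $\mathcal{P}(e)\neq 0$. Via $\gamma R \mapsto \gamma^{-1}x_0$, the compact support of $f$ translates into a compact constraint on $\gamma^{-1}x_0 \in V(\A)$, and discreteness of $V(k)$ leaves only finitely many cosets contributing; the infinitesimal stabilizer condition in (a) ensures that the only coset with $\gamma^{-1}x_0 = x_0$ is $\gamma = 1$. One then shrinks the support of $f_{v_0}$ to a sufficiently small $R(k_{v_0})$-invariant neighborhood of the identity, exploiting continuity of the distinction functional on $\pi_{v_0}$ to keep its $\chi_{v_0}$-pairing nonzero, so as to eliminate these finitely many remaining cosets at the archimedean-analog place $v_0$. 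Once $\mathcal{P}\neq 0$, any irreducible cuspidal constituent $\Pi$ of the representation generated by $P_f$ is isomorphic to $\pi_{v_0}$ at each $v_0 \in S_0$ (by the matrix coefficient/support argument standard in Theorem~\ref{T:main}), has the prescribed depth-zero principal-series shape at every $v\notin S_0$ (by the Iwahori type of $f_v$), and carries a nonzero $(R,\chi)$-period, yielding (i)--(iii).
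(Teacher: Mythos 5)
Your overall architecture (Poincar\'e series, unfolding the $(R,\chi)$-period over $H(k)/R(k)$ via the orbit map $\gamma\mapsto \gamma^{-1}x_0$, matrix coefficients at $S_0$, pro-$p$-Iwahori-type functions elsewhere to force (ii)) matches the paper's. But the step where you eliminate the nonidentity cosets is a genuine gap. You propose to ``shrink the support of $f_{v_0}$ to a sufficiently small $R(k_{v_0})$-invariant neighborhood of the identity.'' This is not available: $f_{v_0}$ must remain a matrix coefficient of the fixed supercuspidal $\pi_{v_0}$ (that is what makes $P(f)$ cuspidal and forces $\Pi_{v_0}\cong\pi_{v_0}$), and the support of such a matrix coefficient is dictated by $\pi_{v_0}$ --- it cannot be made an arbitrarily small neighborhood of $1$ modulo $Z(k_{v_0})$ while the local period $\int_{Z\backslash R(k_{v_0})}f_{v_0}(r)\chi_{v_0}(r)^{-1}dr$ stays nonzero. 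Truncating the matrix coefficient by a cutoff function destroys exactly the properties you need from it. Relatedly, your proposal never actually uses the \emph{semisimplicity} of $\iota$ in hypothesis (a), which the paper flags as essential; it enters only at this step.

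What the paper does instead: semisimplicity of $\iota$ yields an $E$-basis $\{e_1^*,\dots,e_n^*\}$ of $V_E^*$ each of whose members is fixed by some maximal unipotent subgroup $U_{e}$ of $H_E$. For each $e$ one introduces a large auxiliary set $S_e$ of places split in $E$ and takes $f_v$ ($v\in S_e$) supported on $Z(k_v)\cdot J_v$, where $J_v$ is the pro-$p$ radical of the Iwahori obtained as the preimage of $U_e(\kappa_{w_1})\times\prod_{w\neq w_1}H(\kappa_w)$. Then for any $x=\gamma^{-1}x_0$ in the support of $\phi_f$, the coordinate $\alpha_e=\langle e^*,x-x_0\rangle\in E$ has poles of bounded order at places over $S\cup S_0$, no poles elsewhere, and \emph{vanishes} at a place over each $v\in S_e$ (since $x\equiv u\cdot x_0 \bmod \varpi_{w_1}$ with $u\in U_e$ fixing $e^*$). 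The product formula then forces $\alpha_e=0$ for all $e$ once the $S_e$ are large enough, i.e.\ $x=x_0$. Your finiteness observation (discreteness of $V(k)$ in $V(\A)$) is correct but does not by itself dispose of the finitely many bad cosets; you need a mechanism, at places you actually control, to impose enough vanishing conditions, and that is precisely what the semisimplicity hypothesis and the auxiliary sets $S_e$ provide.
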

 \vskip 5pt
 
\vskip 5pt

We make a couple of remarks about the technical conditions (a) and (b). By a well-known theorem of Chevalley \cite[Theorem 5.1 and \S 5.5]{B}, given any closed algebraic subgroup $R$ of $H$ as in Theorem \ref{T:main3}, there is an algebraic representation $\iota:  H \longrightarrow \GL(V)$ such that $R$ is the stabilizer of  a vector $x_0 \in V(k)$ and ${\rm Lie}(R)$ is the infinitesimal stabilizer of $x_0$ (here we are using the hypothesis that $R/Z$ has no nontrivial rational characters).  As R. Beuzart-Plessis explained to us, this implies that $R \backslash H$ is quasi-affine and hence $Z(\A) R(k) \backslash R(\A)$ is a closed subset of $Z(\A) H(k) \backslash H(\A)$. Since cusp forms on $H(k)\backslash H(\A)$ are compactly supported modulo $Z(\A)$, the automorphic $(R,\chi)$-period is absolutely convergent and it makes sense to consider it. 
However, in the above, there is no guarantee that $\iota$ is a semisimple representation, and this semisimplicity is of course an issue in characteristic $p >0$. For our proof of Theorem \ref{T:main3}, we need $\iota$ to be semisimple and this explains the technical condition (a). 
For  technical assumption (b) in Theorem \ref{T:main3}, note that the requirement  is satisfied automatically for almost all places $v$, and is satisfied for all $v \notin S_0$ if $\chi$ is the trivial character.
  
\vskip 10pt
\noindent{\bf Acknowledgments}: 
We thank C W. Chin, B. Conrad, G. Harder, G. Henniart, D. Prasad, G. Savin, F. Shahidi and L. Zhang for useful discussions during the course of this work. We are grateful to V. Lafforgue for his comments on the first draft of this paper.   We would also like to acknowledge useful discussions with B. H. Gross, E. Lapid and R. Beuzart-Plessis which greatly clarify for us  the material of \S \ref{S:LLC2}.
 Finally, we thank the referee of our paper for his many pertinent comments and suggestions which improved the accuracy and exposition of the paper.
  \vskip 5pt
  
The first author is partially supported by a Singapore government MOE Tier 2 grant R-146-000-175-112.
 This paper is based upon work supported by the National Science Foundation under Grant No.~0932078 000 while the authors were in residence at the Mathematical Sciences Research Institute in Berkeley, California, during the Fall 2014 semester. We thank MSRI for providing excellent working conditions. The second author would like to thank the MaxPlanck Institute for Mathematics for its hospitality during the year 2015.

   \vskip 10pt

\section{\bf Proof of Theorem \ref{T:main}}

In this section, we give the  proof of Theorem \ref{T:main}. 
We first assume that $H$ is semisimple. Even though a uniform argument can be given, we shall first deal with the case when $H$ is split, as it is notationally cleaner and conceptually 
simpler.  Throughout, let  $C_N = \prod_v C_{N,v} \subset N(\A)$ be a compact subgroup which projects surjectively  onto $N(k) \backslash N(\A)$. Note that $C_N$ exists because $N(k) \backslash N(\A)$ is compact and the totally disconnected group $N(\A)$ contains arbitrarily large open compact subgroups.

\vskip 5pt

\subsection{\bf Split semisimple case.}  \label{SS:split-ss}
With $H$ a split semisimple group, choose an inclusion
\[  \iota:  H  \longrightarrow \SL_n  \subset \GL_n   \]
over $k$, which allows us to identify $H$ as a closed subgroup of $\GL_n$. Then we have the pullback of the $n^2$ coordinate function $x_{ij}$ on $H$.  
Without loss of generality,  we may assume that the intersection of  $H$ with the upper (respectively lower) triangular Borel subgroup of $\GL_n$ is a Borel subgroup $B = T \cdot U$ 
(respectively $\overline{B}  = T \cdot \overline{U}$) of $H$, and that
 $N \subset U$.  Indeed, since $N$ is unipotent, we may choose a Borel subgroup $B = T \cdot U$ of $H$ such that $N \subset U$. Then $\iota(B) \subset \SL_n$ is a connected solvable subgroup and one may  conjugate the pair $\iota(T)$ and  $\iota(B)$ to lie inside the diagonal torus and the standard Borel subgroup of upper triangular matrices in $\SL_n$ respectively, from which it follows that $\iota(\overline{U})$ is conjugated into the subgroup of lower triangular unipotent matrices.
 As an affine space, we may write $U  = N \times N'$ with $N$ and $N'$ affine subspaces.
\vskip 5pt

Let $\mathcal{O}_{S_0}$ denote the ring of $S_0$-integers (i.e. the subring of elements of $k$ which have no poles outside $S_0$). Then the ``natural" $\mathcal{O}_{S_0}$ integral structure on 
$\GL_n$ induces one on $H$ and $N$. 
 Now let $S$ be a finite set of places of $k$ disjoint from $S_0$  such that for all $v \notin S \cup S_0$,
\vskip 5pt

\begin{itemize}
\item  the groups $H$, $B$, $T$ and $U$  are smooth over $\mathcal{O}_v$ and  $H(\mathcal{O}_v)  = H(k_v) \cap \GL_n(\mathcal{O}_v)$ is a 
hyperspecial maximal compact subgroup; 
\vskip 5pt
\item the intersection of the upper triangular and lower triangular Iwahori subgroups of $\GL_n(k_v)$ give 
Iwahori subgroups  of $H(k_v)$; we denote these by $I_v^+$ and $I_v^-$ respectively.

\vskip 5pt

\item   the decomposition $U = N \times N'$ is defined over $\mathcal{O}_v$, with $N$ and $N'$ smooth over $\mathcal{O}_v$;
\vskip 5pt

\item $C_{N,v} = N(\mathcal{O}_v)$ and $\chi_v$ is trivial when restricted to $N(\mathcal{O}_v)$.
 \end{itemize}
\noindent Note that the above conditions can be achieved when $S$ is large enough. For the first condition, see \cite[\S3.9]{T}. For the second condition, suppose that  $v$ is a place with associated residue field $\kappa_v$  such that  the first condition holds. Then one has a commutative diagram induced by the projection map $\mathcal{O}_v \rightarrow \kappa_v$:
\[  \begin{CD}
B(\mathcal{O}_v) @>>> H(\mathcal{O}_v)  @>>>   \GL_n(\mathcal{O}_v)  \\
@VVV @VVV   @VVV  \\
B(\kappa_v) @>>> H(\kappa_v)  @>>>  \GL_n(\kappa_v),  \end{CD} \]
where $B (\kappa_v)$ is a Borel subgroup of $H(\kappa_v)$ and is the intersection of $H(\kappa_v)$ with  the standard (upper triangular) Borel subgroup in $\GL_n( \kappa_v)$. Now recall from \cite[\S 3.7]{T} that the preimage  in $\GL_n(\mathcal{O}_v)$ of the standard Borel subgroup of   $\GL_n(\kappa_v)$ is the standard Iwahori subgroup of $\GL_n(k_v)$ and the   preimage in $H(\mathcal{O}_v)$ of $B(\kappa_v)$ is an Iwahori subgroup of $H(k_v)$. It follows from this that the second condition holds. 

 \vskip 5pt

Now fix an open compact subset $C_{S_0}$ of $H(k_{S_0}) = \prod_{v \in S_0} H(k_v)$ and some nonempty finite set of places $S_1 \cup S_2$ of $k$ disjoint from $S \cup S_0$. 
We are going to define an open compact subset $C = \prod_v C_v$ as follows:
\vskip 5pt

\begin{itemize}
\item we take
\[  \prod_{v \in S_0} C_v  = C_{S_0}. \]
\vskip 5pt
 \item for $v \in S$,  let $C_v$ be an Iwahori subgroup $I_v$ of $H(k_v)$ such that
$\chi_v$ restricted to $N(k_v) \cap I_v$ is trivial.   

\vskip 5pt
\item for $v \in S_1$, let $C_v$ be the pro-$p$ radical $J_v^+$ of $I_v^+$;
\vskip 5pt
\item for $v \in S_2$, let $C_v$ be the Iwahori subgroup $I_v^-$; 
\vskip 5pt
\item for all other places $v$,   let $C_v = K_v = H(\mathcal{O}_v)$.
\end{itemize}
\vskip 5pt

\noindent  Note that for the second condition above, the desired Iwahori subgroup always exists. Indeed, suppose one starts with  any Iwahori subgroup $I'_v$ stabilizing a chamber in the apartment associated to $T$ in the Bruhat-Tits building of $H$.  For $t \in T(k_v)$, the compact open subgroup $t I'_v t^{-1} \cap N(k_v)$ can be made arbitrarily small by taking $t$ sufficiently deep into the positive Weyl chamber, i.e. by ensuring that $|\alpha(t)|_v$ is sufficiently small for all positive roots of $H$ with respect to $(T, B)$. Since $\chi_v$ is smooth, it will be trivial on  $t I'_v t^{-1} \cap N(k_v)$ when the latter is sufficiently small, and one can take $I_v = t I'_v t^{-1}$.
\vskip 5pt

The following is a key lemma:
\vskip 5pt

\begin{lemma}  \label{L:key}
If $S_1$ and $S_2$ are sufficiently large, one has
\[  H(k)  \cap C \cdot C_N  \subset  N(k),  \]
with the intersection occurring in $H(\A)$. Indeed, one may take $S_1$ and $S_2$ to be singleton sets, each containing a place whose residue field is sufficiently large.
\end{lemma}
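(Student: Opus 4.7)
My plan is to run a Riemann--Roch argument on $Y$, applied to matrix entries of $h$ under the embedding $\iota: H \hookrightarrow \SL_n$. Given $h \in H(k) \cap C \cdot C_N$, at each place write $h = c_v \cdot n_v$ with $c_v \in C_v$ and $n_v \in C_{N,v}$. Since at $v_0 \in S_0$ the element $h$ lies in the fixed compact set $C_{v_0} \cdot C_{N,v_0}$, every matrix entry $h_{ij}$ has uniformly bounded polar order at $v_0$, giving an effective divisor $D$ supported on $S_0$ (depending only on the fixed data, not on $h$) such that every $h_{ij}$ lies in the Riemann--Roch space $L(D)$.

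The first step is to force $h$ into $U(k)$. Because $n_v \in U(k_v)$ is upper-triangular unipotent and lies in $N(\mathcal{O}_v)$ for $v \notin S_0 \cup S$, the strictly lower-triangular entries of $h = c_v n_v$ are determined, up to integral corrections, by the lower-triangular part of $c_v$. At $v \in S \cup S_1$ the (pro-$p$) Iwahori conditions force this part into $\varpi_v \mathcal{O}_v$, while at other places outside $S_0$ it lies in $\mathcal{O}_v$. Hence each $h_{ij}$ with $i > j$ belongs to $L\bigl(D - \sum_{v \in S \cup S_1} v\bigr)$; choosing $S_1$ to contain a single place whose residue degree exceeds $\deg D$ forces $h_{ij} = 0$, so $h$ is upper-triangular. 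The identity $\det h = 1$ (from $H \subset \SL_n$) then makes each $h_{ii}$ a global unit, and the pro-$p$ Iwahori condition at $v \in S_1$ gives $h_{ii} \in 1 + \varpi_v \mathcal{O}_v$; the same Riemann--Roch estimate applied to $h_{ii} - 1$ yields $h_{ii} = 1$. Thus $h \in U(k)$.

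The second step exploits $S_2$. With $h \in U(k)$, rewriting $c_v = h n_v^{-1} \in U(k_v) \cap C_v$ shows that at $v \in S_2$ the element $c_v$ sits in the principal congruence subgroup $U(\varpi_v \mathcal{O}_v)$, while at other places outside $S_0$ it lies in $U(\mathcal{O}_v)$. Using the affine-space decomposition $U = N \times N'$ provided by the setup, the $N'$-coordinates of $h$ (equivalently, the coordinates of $hN \in (U/N)(k)$) are rational functions on $Y$ which are integral outside $S_0$, bounded in $S_0$, and vanish at every place of $S_2$. A second Riemann--Roch vanishing, triggered by a single place of $S_2$ whose residue degree is sufficiently large, kills these coordinates, so $h \in N(k)$ as required.

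The main technical obstacle is keeping the polar bound $D$ effective and uniform in $h$: once $D$ is controlled purely by the fixed data $C_{S_0}, C_{N,S_0}$ and $\iota$, the two Riemann--Roch vanishing arguments become routine. Everything else reduces to concrete matrix manipulations using the parahoric conditions in the definition of $C_v$ and the identification $U(k_v) \cap I_v^- = U(\varpi_v \mathcal{O}_v)$. The quantitative statement about singleton $S_1, S_2$ is then just the observation that a single place of degree $> \deg D$ suffices to make the relevant $L$-space zero.
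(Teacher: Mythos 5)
Your proof is correct and follows essentially the same route as the paper's: bound the polar divisor of the matrix entries of $\gamma$ by the fixed data, use the congruence conditions at $S_1$ (resp.\ $S_2$) to force vanishing of the strictly lower-triangular and diagonal-minus-one entries (resp.\ of the coordinates cutting out $N$ inside $U = N \times N'$), and conclude from the fact that a principal divisor has degree $0$ --- which is exactly your Riemann--Roch vanishing $L(D - v) = 0$ for $\deg(D-v) < 0$. One small inaccuracy: for $v \in S$ the group $C_v$ is a conjugate $tI'_vt^{-1}$ of an Iwahori subgroup (chosen only so that $\chi_v$ is trivial on $N(k_v) \cap I_v$) and need not lie in $\GL_n(\mathcal{O}_v)$, so the entries may have poles of bounded order at $S$ as well as at $S_0$; your effective divisor $D$ should be supported on $S_0 \cup S$ rather than $S_0$ alone, which changes nothing since vanishing at $S$ is never needed.
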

\vskip 5pt

\begin{proof}
We regard $\gamma \in H(k)$ as an element in $\GL_n(k)$, so that $\gamma$ is determined by its coordinates $x_{ij}(\gamma) \in k$.
 Consider $\gamma \in H(k) \cap C  \cdot C_N$.
  Away from the set $T= S \cup S_0 \cup S_1 \cup S_2$ of places,  
  \[  C^T \cdot C^T_N  = \prod_{v \notin T} K_v \]
    and so for $v \notin T$, $x_{ij}(\gamma) \in \mathcal{O}_v$.
  Hence $x_{ij}(\gamma)$ has no poles at the closed points of $Y$ outside $T$.
 We now consider the places in $T$:
\vskip 5pt

\begin{itemize}
\item At places $v \in S_0 \cup S$,  $x_{ij}(\gamma)$ has bounded orders of poles (determined by the compact sets $C_v \cdot C_{N,v}$). 
More precisely, there exists a positive integer $M$ (depending only on $C_{N_v}$, $\chi_v$ and $C_v$ for $v \in S \cup S_0$)  such that  for all $i, j$,  the order of poles of $x_{ij}(\gamma)$ is at most $M$ at all $v \in S_0 \cup S$.

\vskip 5pt
\item at places $v_1 \in S_1$, the condition that $\gamma \in C_{v_1}\cdot C_{N,{v_1}}$ implies that $x_{ij}(\gamma)$ vanishes at $v_1$ for all $i > j$, and 
$x_{ii}(\gamma)  -1$ vanishes at $v_1$ for all $i$.
 
 \vskip 5pt

\item   at places $v_2 \in S_2$,  the condition that $\gamma \in C_{v_2}  \cdot C_{N,{v_2}}$ implies at least that $x_{ij}(\gamma)$ has no poles at $v_2$.
 \end{itemize}
 \vskip 5pt

Since a principal divisor on $Y$ has degree $0$ (i.e. by the product formula), it is clear that if $S_1$ is sufficiently large, $x_{ij}(\gamma) = 0$ for all $i > j$ and $x_{ii}(\gamma)  =1$ for all $i$. 
In particular, any $\gamma \in H(k) \cap C \cdot C_N$ is strictly upper triangular and hence lies in $U(k)$.  In fact, one could take $S_1$ to contain only one place $v_1$ whose residue field (and hence the Galois orbit of points on  $Y(\mathbb{F}_q)$ associated to $v_1$) is  sufficiently large. This is possible since $Y(\overline{\mathbb{F}}_q)$ is infinite whereas $Y(\mathbb{F}_{q^n})$ is finite for each $n$. 
\vskip 5pt

Now we claim that if  $S_2$ is sufficiently large, the condition that $\gamma \in C_{v_2} \cdot C_{N,{v_2}}$  implies that $\gamma \in N(k)$. Indeed,
one may change coordinates on $U$, so that with respect to the new coordinates $y_{ij}$ ($i<j$), the subspace $N$ is defined by the vanishing of a subset $\Sigma$ of the $y_{ij}$'s.
For $v \in S_2$,  $U =  N \times N'$ is defined over $\mathcal{O}_v$. For $\gamma \in U(k)  \cap C_{v_2} \cdot C_{N,{v_2}}$, 
 one sees that $y_{ij}(\gamma)  \in \mathcal{O}_{v_2}$ for all $i< j$, and $y_{ij}(\gamma)$ vanishes at $v_2$ for $(i,j) \in \Sigma$.  Hence if $S_2$ is sufficiently large, $y_{ij} (\gamma)= 0$ for all $(i,j) \in \Sigma$, and we conclude that $\gamma \in N(k)$. As above, one could also have taken $S_2$ to consist of a single place $v_2$ whose residue field is sufficiently large.
 \end{proof}
\vskip 15pt

 We shall now define  a test function $f  = f_{S_1, S_2} = \prod_v  f_v \in C^{\infty}_c(H(\A))$ as follows:
\vskip 5pt
\begin{itemize}
\item for $v_0 \in S_0$, take
\[  f_{v_0}(h)  =  \langle w_{v_0}^{\vee}, h \cdot w_{v_0} \rangle \]
 to be the (compactly supported) matrix coefficient of $\pi_{v_0}$ formed using  nonzero vectors $w_{v_0} \in \pi_{v_0}$ and $w_{v_0}^{\vee}  \in \pi_{v_0}^{\vee}$ such that
\[  \int_{N(k_{v_0})} \chi(u)^{-1}  \cdot \langle w_{v_0}^{\vee}, u \cdot w_{v_0} \rangle  \, du  \ne 0. \] 
This is always possible since $\pi_{v_0}$ is supercuspidal and $(N(k_{v_0}), \chi_{v_0})$-distinguished. 
\vskip 5pt

More precisely, let $\ell \in \Hom_{N(k_{v_0})}(\pi_{v_0}, \chi_{v_0})$ be a nonzero element. For any nonzero vector $w_1 \in \pi_{v_0}$, one has
$\pi_{v_0}(C^{\infty}_c(H(k_{v_0}))) \cdot  w_1  =\pi_{v_0}$ and so there exists $\varphi \in C^{\infty}_c(H(k_{v_0})) $ such that  $\ell ( \pi_{v_0}(\varphi) \cdot w_1) \ne 0$. On the other hand, the map $\phi \mapsto \pi_{v_0}(\phi)$ is a $H(k_{v_0}) \times H(k_{v_0})$-equivariant projection 
\[  C^{\infty}_c(H(k_{v_0}))  \longrightarrow {\rm End}^{\infty}(\pi_{v_0} )  = \pi_{v_0} \otimes \pi_{v_0}^{\vee},\]
onto the maximal $\pi_{v_0}\otimes \pi_{v_0}^{\vee}$-isotypic quotient of $C^{\infty}_c(H(k_{v_0}))$.  Since $\pi_{v_0}$ is supercuspidal, this quotient in fact occurs as a submodule and 
\[  C^{\infty}_c(H(k_{v_0}))   \cong( \pi_{v_0} \otimes \pi_{v_0}^{\vee})   \oplus  \mathcal{C}'  \]
where $\mathcal{C}'$ does not contain $\pi_{v_0} \otimes \pi_{v_0}^{\vee}$ as a subquotient.  Moreover, the submodule $\pi_{v_0} \otimes \pi_{v_0}^{\vee}$ is realized by the formation of matrix coefficients of $\pi_{v_0}^{\vee}$. Hence, since $\pi_{v_0}(\varphi) \ne 0$, we may assume that  
\[  \varphi(h)  = \langle w, h \cdot w^{\vee} \rangle  \]
is a matrix coefficient of $\pi_{v_0}^{\vee}$.  Then
\begin{align}
 0 \ne \ell (\pi_{v_0}(\varphi) \cdot w_1)  & = \int_{H(k_{v_0})} \varphi(h)  \cdot \ell(h \cdot w_1)  \, dh   \notag \\
 &= \int_{N(k_{v_0}) \backslash H(k_{v_0})} \ell (h \cdot w_1)  \cdot\left(  \int_{N(k_{v_0})} \chi(u)\cdot  \varphi(uh) \, du \right)  \, dh  \notag
 \end{align}
 Thus, for some $h$, the inner integral is nonzero, as desired.

 \vskip 5pt

 Now let $C_{S_0}$ be the support of $f_{S_0} = \prod_{v \in S_0} f_{v_0}$ and define 
$C = \prod_v C_v$ as above.
\vskip 5pt

\item for $v \notin S_0$,  let $f_v$ be the characteristic function of $C_v$. 
\end{itemize}
Then $C  = \prod_v C_v$ is the support of $f$.
\vskip 5pt

Now we consider the Poincar\'e series associated to $f$:
\[  P(f) (h)  =  \sum_{\gamma \in  H(k)}  f(\gamma h), \]
so that $P(f)  \in C^{\infty}( H(k) \backslash H(\A))$. The fact that $\pi_{v_0}$ is supercuspidal implies that $P(f)$ is a cuspidal function (i.e. all its constant terms vanish).
Since $P(f)$ is smooth, it follows by \cite[Prop. 5.9]{BJ} that $P(f)$ is a cusp form and hence has compact support on $H(k) \backslash H(\A)$ by \cite[Prop. 5.2]{BJ}. In particular, $P(f)  \in L^2(H(k) \backslash H(\A))$. 
\vskip 5pt
 
To prove Theorem \ref{T:main}, 
we need to show that  $P(f)$ is globally $(N, \chi)$-distinguished. 
 We have:
\[
W_{N,\chi}(P(f))  =  \int_{N(k) \backslash N(\A)} \chi (u)^{-1}  \cdot P(f) (u)  \, du
=  \int_{N(k) \backslash N(\A)} \chi(u)^{-1}  \cdot \sum_{\gamma \in  H(k)} f(\gamma u) \, du   
\]
It suffices to sum over those $\gamma \in  H(k)$ such that 
\[  \gamma \in  H(k) \cap  C \cdot C_N. \]
 Hence, when $S_1$ and $S_2$ are sufficiently large,  Lemma \ref{L:key} implies that 
 \[  H(k) \cap C \cdot C_N  \subset N(k). \]
Thus,
\[
W_{N,\chi}(P(f))(1)  = \int_{N(k) \backslash N(\A)} \chi(u)^{-1}  \cdot \sum_{\gamma \in N(k)} f(\gamma u) \, du   =  \int_{N(\A)}   \chi(u)^{-1}  \cdot  f(u)  \, du 
=  \prod_v  W_v(f_v),  \]
where
\[  W_v(f_v)  =  \int_{N(k_v)} \chi_v(u)^{-1}  \cdot f_v(u)  \, du. \]
 Moreover, it follows by construction that for all $v$,
  \[  W_v(f_v)  \ne 0,  \]
  and for almost all $v$, one has $W_v(f_v)  = 1$.
Thus, we have shown that $W_{N,\chi}(P(f))  \ne 0$, so that $P(f)$ is  globally $(N,\chi)$-distinguished.  The spectral decomposition of $P(f)$ in $L^2(H(k) \backslash H(\A))$ then gives a cuspidal representation $\Pi$
such that $\Pi_{v_0} \cong \pi_{v_0}$ for $v_0 \in S_0$ and for all $v \notin S_0$, $\Pi_v$ has nonzero fixed vectors under a pro-$p$ Sylow subgroup of an Iwahori subgroup of $H(k_v)$.
It follows from results of Morris \cite{Mo} and Moy-Prasad \cite[Prop. 6.7 and Theorem 6.11]{MP} that for all $v \notin S_0$, $\Pi_v$ is a constituent of  a depth zero principal series representation induced from a Borel subgroup. This proves the theorem in the split semisimple case.
\vskip 10pt

\subsection{\bf General semisimple case.}
We may now consider the case when $H$ is a general semisimple group over $k$. Let $T$ be a maximal $k$-torus of $H$
containing a maximal $k$-split torus. Let $E/k$ be the splitting field of $T$, so that $H_E = H \times_k E$ is split over $E$. It is important to note that $E$ is a separable extension of $k$, since all tori over $k$ are split over a separable closure of $k$.  Choose an embedding 
\[  H_E \hookrightarrow    \GL_n (E)  \quad \text{over $E$} \]
as in the split case; it induces a $k$-embedding
\[  \iota:  H \hookrightarrow {\rm Res}_{E/k}  H_E  \hookrightarrow {\rm Res}_{E/k} \GL_n. \]
In particular, the intersection of $H_E$ with the upper triangular Borel subgroup of $\GL_n(E)$ is a Borel subgroup $T_E \cdot U_E$ of $H_E$, and $N_E =  N \times_k E  \subset U_E$.
As in the split case, we may write $U_E  = N_E  \times N_E'$ as the product of two affine subspaces. 
 Moreover, the $\mathcal{O}_{E, S_0}$-integral structure of $\GL_n(E)$ induces one on $H_E$ and an $\mathcal{O}_{S_0}$-integral structure on $H$. 
 For any $\gamma \in H(k)$, we may regard $\gamma$ as a matrix $(x_{ij}(\gamma))$ with $x_{ij}(\gamma)  \in E$. 
 
 \vskip 5pt
 
 Now let $S$ be a finite set of places of $k$ such that for all $v \notin S \cup S_0$,
 \vskip 5pt
 
 \begin{itemize}
 \item the groups $H \hookrightarrow {\rm Res}_{E/k}  H_E  \hookrightarrow {\rm Res}_{E/k} \GL_n(E)$ are smooth reductive groups over $\mathcal{O}_v$, so that their groups of $\mathcal{O}_v$-points are hyperspecial maximal compact subgroups.
 \vskip 5pt
 
 \item  $N \hookrightarrow {\rm Res}_{E/k} N_E  \hookrightarrow {\rm Res}_{E/k} U_E$ are closed immersions of smooth  unipotent group schemes over $\mathcal{O}_v$. 
 
 \vskip 5pt
 
 \item the intersection of $H(E_v)$ with the upper triangular and lower triangular Iwahori subgroups of $\GL_n(E_v)$   are Iwahori subgroups of $H(E_v)$ (where $E_v  = E \otimes_k  k_v$);  for each $w$ lying over $v$,  we denote the Iwahori subgroups of $H(E_w)$ by $I_{w}^+$ and $I_{w}^-$ respectively. 
 \vskip 5pt
 
 \item $C_{N,v} = N(\mathcal{O}_v)$ and $\chi_v$ is trivial when restricted to $N(\mathcal{O}_v)$.  
 \end{itemize}
 \vskip 5pt
 
 The existence of such a finite set $S$ follows from the same argument as in the split semisimple case considered above. 
 Now fix nonempty finite sets of places $S_1$ and $S_2$ of $k$ disjoint from $S \cup S_0$  such that $E$ splits completely at any $v \in S_1 \cup S_2$.
 We fix a test function $f = \otimes_v f_v \in C^{\infty}_c(H(\A))$ as follows:
 \vskip 5pt

 \begin{itemize}
 \item for places $v\in S_0 \cup S$, we let $f_v$ be as in the split case;
 \vskip 5pt
 
  \item for places $v \in S_1$, suppose that the places of $E$ over $v$ are $w_i$, $i  =1,...,d$.  Then one has natural isomorphisms
 \[  E \otimes_k k_v  \cong \prod_i  E_{w_i}  \cong k_v \times ...\times k_v  \quad \text{($d$ times)} \] 
 inducing an isomorphism
 \[   H_v \times_{k_v}  E_v \cong \prod_i  H_{E, w_i}   \cong H_v  \times ....\times H_v. \]
 The natural embedding 
 \[  \rho: H_v  \hookrightarrow H_v \times_{k_v}  E_v  \cong  \prod_i H_{E, w_i} \]
  is diagonal, in the sense that the projection onto any factor of the product $\prod_i  H_{E, w_i}$ is an isomorphism over $k_v$. Moreover, by our choice of $S$, 
  the preimage under $\rho$  of $\prod_i H(\mathcal{O}_{w_i})$ in $H(k_v)$ is the hyperspecial maximal compact subgroup $H(\mathcal{O}_v)$.  In particular, 
  \[ H(k_v)  \cap \rho^{-1} \left(  I_{w_1}^+  \times \prod_{i >1} H(\mathcal{O}_{w_i})  \right)  \]
  is an Iwahori subgroup $I_v^+$ of $H(k_v)$.  We then take $f_{v}$ to be the characteristic function of the pro-$p$ radical $J_v^+$ of $I_v^+$. 
 \vskip 5pt
 
 \item for places $v \in S_2$, the analogous discussion as for $S_1$ defines an Iwahori subgroup $I_v^-$ of $H(k_v)$, and we let $f_v$ be the characteristic function of $I_v^-$. 
 \vskip 5pt

 \item for all other places $v$, let $f_v$ be the characteristic function of  $K_v= H(\mathcal{O}_v)$.
 \end{itemize}
 \vskip 5pt
 
Let $C_f = \prod_v C_{f,v}$ be the support of $f$. Then we claim that Lemma \ref{L:key} continues to hold, i.e.
\[  H(k)  \cap C_f \cdot C_N  \subset N(k). \]
To see this, note that:
\vskip 5pt

\begin{itemize}
\item at places $w$ of $E$ lying above places of $k$ outside $S \cup S_0 \cup S_1 \cup S_2$, $x_{ij}(\gamma) \in \mathcal{O}_w$;

  \vskip 5pt
 \item at each place $w$ of $E$ lying above $S \cup S_0$, the maximal order of poles of $x_{ij}(\gamma)$ (for all $i,j$) is at most some integer $M$;
 \vskip 5pt
 
 \item at places $w$ of $E$ lying over $S_1 \cup S_2$, $x_{ij}(\gamma)$ lies in $\mathcal{O}_w$. Moreover, 
 for each $v_1 \in S_1$, there is at least one place $w_1$ lying over $v_1$ such that  $x_{ij}(\gamma)$ vanishes at $w_1$ for all $i > j$ and $x_{ii}(\gamma)-1$ vanishes at $w_1$ for all $i$.  Together with the above, we see that if $S_1$ is sufficiently large, $x_{ij}(\gamma)  = 0$ for all $i>j$ and $x_{ii}  =1$ for all $i$. 
\vskip 5pt

\item We have thus shown that
\[  H(k)  \cap C_f \cdot C_N  \subset U_E(E)  \subset H(E). \]
To show the desired statement, it remains to show that
\[  H(k)  \cap C_f \cdot C_N  \subset N(E).  \]
 This follows by the same argument as in the split case (if $S_2$ is sufficiently large). 
 \end{itemize} 
 \vskip 5pt
 
\noindent With the key lemma in hand,  we may now form the Poincar\'e series $P(f)$ and show the nonvanishing of  $W_{N, \chi}(P(f))$ by the same argument as in the split case. 
 Hence Theorem \ref{T:main} is proved when $H$ is semisimple.

 \vskip 10pt

 \subsection{\bf Reductive case.} \label{SS:reductive}
 We now deal with the general reductive case. Consider the semisimple group $\bar{H} := H/ Z$ over $k$ and let
 \[  r:  H \longrightarrow \bar{H}  = H/Z \]
 be the natural projection map.  
  In the semisimple case, 
    we have constructed an open compact subset of $\bar{H}(\A)$  of the form
  \[ \bar{C} =   r(C_{S_0})  \times  \prod_{v \in S} \bar{I}_v  \times \prod_{v \in S_1}\bar{J}_v^+  \times \prod_{v \in S_2}  \bar{I}_v^-   \times \prod_{\text{other $v$}} \bar{K}_v  \]
    where 
    \vskip 5pt
    
  \begin{itemize}
  \item $C_{S_0}$ is the support of an appropriate matrix coefficient of  the given supercuspidal representation $\pi_{S_0}$ of $H(k_{S_0})$, which is compact modulo $Z(k_{S_0})$;
    \item $S$ is some finite set of places and $\bar{I}_v$ is   some Iwahori subgroup $\bar{I}_v$ of $\bar{H}(k_v)$ for $v \in S$;
    \item $S_1$ is some finite set of places and $\bar{J}_v^+$ is the pro-$p$ unipotent radical of some Iwahori subgroup $I_v^+$ of $\bar{H}(k_v)$ for $v \in S_1$;
    \item  $S_2$ is some finite set of places and $\bar{I}_v^-$ is  some Iwahori subgroup of $\bar{H}(k_v)$ for $v \in S_2$; 
 \item $\bar{K}_v$ is a hyperspecial maximal compact subgroup of $\bar{H}(k_v)$ for all other $v$'s. 
  \end{itemize}
  The key property of $\bar{C}$ is that expressed in Lemma \ref{L:key}:
  \[  \overline{H}(k)  \cap \bar{C} \cdot r(C_N)  \subset r(N(k)). \] 
  In the construction of $\bar{C}$, we may further assume that $S$ is taken to be so large that, in addition to the conditions satisfied in the construction for $\bar{H}$, the groups 
   $H$ and $Z$ and the character $\omega$ of $Z$ are unramified outside $S$.      
      
  \vskip 10pt
  Note that for any Iwahori subgroup $\bar{I}_v$ of $\bar{H}(k_v)$, there is a unique Iwahori subgroup $ I_v$ of $H(k_v)$ such that 
  \[  r^{-1}(\bar{I}_v)  = Z(k_v) \cdot I_v. \]
  Indeed, the Bruhat-Tits building $\mathcal{B}(H(k_v))$ of $H(k_v)$ projects onto the building $\mathcal{B}(\bar{H}(k_v))$ of $\bar{H}(k_v)$; if $\bar{I}_v$ is associated with a chamber $\bar{\mathcal{C}}_v \subset \mathcal{B}(\bar{H}(k_v))$, then $I_v$ is associated with the unique chamber  $\mathcal{C}_v \subset \mathcal{B}(H(k_v))$ projecting onto $\bar{\mathcal{C}}_v$.
 Likewise, there is a unique hyperspecial maximal compact subgroup $K_v$ such that  $r^{-1}(\bar{K}_v) = Z(k_v) \cdot K_v$.
  The preimage of $\bar{C}$ in $H(\A)$ is
  \[  r^{-1}(\bar{C})  = C_{S_0}  \times  \prod_{v \in S} r^{-1}(\bar{I}_v)  \times \prod_{v \in S_1} r^{-1}(\bar{J}_v^+)  \times \prod_{v \in S_2}  r^{-1}(\bar{I}_v^-)   \times \prod_{\text{other $v$}} r^{-1}(\bar{K}_v).  \]
\vskip 5pt

We shall modify the subgroup $r^{-1}(\bar{I}_v) = Z(k_v)  \cdot I_v$ at the places $v \in S$ slightly. For $v \in S$, let $J_v$ be the pro-$p$ radical of $I_v$ and consider 
\[ J_v^{der} =  H^{der}(k_v) \cap J_v, \]
 which is the pro-$p$ radical of the Iwahori subgroup $I^{der}_v =  H^{der}(k_v) \cap I_v$ of the derived group $H^{der}(k_v)$.
 The subgroup $Z(k_v) \cdot J^{der}_v$ of $H(k_v)$ is compact modulo $Z(k_v)$.  Moreover, observe that
 \[  Z(k_v)  \cap J^{der}_v  = \{1 \}. \]
 Indeed, $Z(k_v)  \cap J_v^{der}  \subset Z(k_v) \cap H^{der}(k_v)$ is a finite $p$-group. However, in $Z(k_v)$, there are no nontrivial elements of finite $p$-power order, because $k$ has characteristic $p$.  Hence
 \[  Z(k_v) \cdot J^{der}_v =  Z(k_v)  \times J^{der}_v. \]
 
\vskip 5pt 
 
  Now we set
\[  C = C_{S_0} \times  \prod_{v \in S} Z(k_v) \cdot J_v^{der} \times \prod_{v \in S_1} r^{-1}(\bar{J}_v^+)  \times \prod_{v \in S_2}  r^{-1}(\bar{I}_v^-)   \times \prod_{\text{other $v$}} r^{-1}(\bar{K}_v),  \]
and note that $C \subset r^{-1}(\bar{C})$.  We may define the following test function $f = \prod_v f_v$:
  \vskip 5pt
  \begin{itemize}
  \item for the places $v_0 \in S_0$, let $f_{v_0}$ be a matrix coefficient of $\pi_{v_0}$ as in the semisimple case;
  \vskip 5pt
  \item for all other places $v$, let $f_v$ be supported on $C_v$, equivariant under $Z(k_v)$ with respect to $\omega_v$ and equal to $1$ on the relevant  compact subgroups $J^{der}_v$, $J_v^+$, $I_v^-$ or $K_v$.  This is possible because, by construction, $\omega_v$ is trivial on the intersection of $Z(k_v)$ with the relevant  compact subgroup.
 In particular, our discussion above says that at places in $S$, there is no compatibility to check; this is the main reason for using $J^{der}_v$ for $v \in S$.
   \end{itemize}
   \vskip 5pt

 Now $f$ is supported on $C$ and equivariant with respect to $\omega$ under $Z(\A)$, so that $f$ is left $Z(k)$-invariant.
  Define the Poincar\'e series 
  \[  P(f)(h)  = \sum_{\gamma \in Z(k) \backslash H(k) } f(\gamma h). \]
     Then $P(f)$ is a cuspidal automorphic function on $H(k)\backslash H(\A)$ with central character $\omega$ under $Z(\A)$ and thus belongs to the space $L^2_{\omega}(Z(\A) H(k) \backslash H(\A))$ of functions which are $(Z(\A), \omega)$-equivariant  and square-integrable on $Z(\A) H(k)  \backslash H(\A)$. 
  Moreover,  observe that the projection $r$ induces an injection:
  \[  r:    Z(k) \backslash \left( H(k)  \cap C \cdot C_N  \right)  \hookrightarrow  \bar{H}(k)  \cap \bar{C} \cdot r(C_N) \subset r(N(k)). \]
  Hence,
  we deduce that
  \[  H(k)  \cap C \cdot C_N   \subset Z(k)  \cdot N(k). \]
  Then
  \[  W_{N,\chi}(P(f)) = \int_{N(\A)}  f(n) \cdot \chi(n)^{-1} \, dn = \prod_v W_v(f_v) \ne 0 \]
  as before.  This completes the proof of Theorem \ref{T:main}.

 \vskip 10pt

 \section{\bf Proof of Corollary \ref{T:main2}.}  \label{S:corollary}
 
In this section, we give the proof of Corollary \ref{T:main2} and will use the notations in the corollary. 
The corollary requires us to globalize a number of objects and we need to deal with each in turn before we are in a position to apply Theorem \ref{T:main}.
\vskip 5pt

 \subsection{\bf Globalizing the field.}  \label{SS:global-group}
 Consider the local field $F \cong \mathbb{F}_q ( \! ( t ) \! )$. In many applications, it suffices to simply take
 \[  k_0 = \mathbb{F}_q(t) \]
  to be the function field of $\mathbb{P}_1$ over $\mathbb{F}_q$ and a place $v_0$ of $k_0$ such that $k_{0, v_0} \cong F$. However, we shall also need the following well-known fact;  we briefly explain how it can be achieved by Krasner's lemma.
  \vskip 5pt
  
  \begin{lemma}  \label{L:krasner}
 Given a finite Galois extension $E/F$ of local fields, one can find a finite Galois extension $k_1/k$ of global fields with $[k_1: k] = [E:F]$ and  a place $v$ of $k$  such that $k_{v} \cong F$ and $k_1 \otimes_k k_{v} \cong E$. In particular, the natural map ${\rm Gal}(E/F) \hookrightarrow {\rm Gal}(k_1/k)$ is an isomorphism. 
\end{lemma}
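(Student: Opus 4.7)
The plan is to use Krasner's lemma to convert the purely local extension $E/F$ into a global splitting datum, and then to pass to the fixed field of a decomposition group in order to force the global and local degrees to match. We fix $k_0 = \mathbb{F}_q(t)$ with $v_0$ the place $t=0$, so that $k_{0,v_0} \cong F$; this reduces the lemma to producing the required $(k, k_1, v)$ with $k_0 \subset k$.
\vskip 5pt

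Write $E = F(\alpha)$ with minimal polynomial $P(X) \in F[X]$ of degree $n = [E:F]$. Since $E/F$ is Galois and hence separable, the roots of $P$ are distinct and all lie in $E$. By density of $k_0$ in $F$, we approximate $P$ by a monic polynomial $\widetilde P \in k_0[X]$ of the same degree with $v_0$-adically close coefficients. Krasner's lemma then guarantees that, for sufficiently good approximation, each root of $\widetilde P$ in $\overline F$ lies within the Krasner radius of some root of $P$ and therefore generates the same extension of $F$. Consequently $\widetilde P$ is irreducible over $F$ (hence over $k_0$), splits in $E$, and has $E$ as its $F$-splitting field.
\vskip 5pt

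Let $k_1$ be the splitting field of $\widetilde P$ over $k_0$, which is automatically a finite Galois extension, and fix a place $w$ of $k_1$ above $v_0$. The corresponding embedding $k_1 \hookrightarrow \overline F$ lands inside $E$, so the completion $k_{1,w}$ is contained in $E$; since $k_{1,w}$ contains $F$ together with all roots of $\widetilde P$, it must equal $E$. The decomposition group $D_w \subset \Gal(k_1/k_0)$ therefore has order $[E:F] = n$. Taking $k := k_1^{D_w}$ and $v := w|_k$, the extension $k_1/k$ is Galois with group $D_w$ of order $n$; because the decomposition group of $w$ inside $\Gal(k_1/k)$ is still $D_w$, it equals the full Galois group, so $w$ is the only place of $k_1$ above $v$, $k_v = F$, and $k_1 \otimes_k k_v \cong k_{1,w} = E$. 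The claimed isomorphism $\Gal(E/F) \cong \Gal(k_1/k)$ is then simply the identification of the decomposition group $D_w$ with $\Gal(k_{1,w}/k_v)$ on one side and with $\Gal(k_1/k)$ on the other.
\vskip 5pt

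The one delicate step is the quantitative input of Krasner's lemma: one must ensure the coefficients of $\widetilde P$ are close enough to those of $P$, relative to the minimal root separation of $P$, so that all roots of $\widetilde P$ fall into the Krasner disks of roots of $P$ and irreducibility is thereby preserved. Separability of $P$, automatic from the Galois hypothesis, makes this threshold strictly positive, and the remainder of the argument is standard decomposition-group bookkeeping.
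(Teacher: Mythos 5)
Your proof is correct and follows essentially the same route as the paper's: approximate the minimal polynomial $v_0$-adically over $k_0=\mathbb{F}_q(t)$, invoke Krasner's lemma to preserve irreducibility and the splitting field, take $k_1$ to be the splitting field of the approximant over $k_0$ (the paper phrases this as the Galois closure of $k_0(\alpha_0)$ inside $E$, which is the same field), and descend to the fixed field of the decomposition group at a place above $v_0$. No gaps.
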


\vskip 5pt

\begin{proof}
Suppose that $E  = F(\alpha)$ (by separability) and let $f(x) \in F[x]$ be the (irreducible) minimal polynomial of $\alpha$. Then $E$ is the splitting field of  $f$.  Let $f_0 \in k_0[x]$ be $v_0$-adically sufficiently close to $f$ coefficient-wise, so that $f_0$ is also irreducible over $F$. By Krasner's lemma, there is a root $\alpha_0$ of $f_0$ which is close to $\alpha$ such that $E = F(\alpha_0)$.   Thus, the global field $k_0(\alpha_0)$ satisfies 
 \[  k_0(\alpha_0) \otimes_{k_0} F \cong  k_0(\alpha_0) \cdot F  =  E. \] 
  However, the extension $k_0(\alpha_0) / k_0$ may not be Galois.
  
  \vskip 5pt
  
  Since $E/F$ is Galois, $E$ contains all the roots of $f_0$.   Let $k_1$ be the Galois closure (in $E$) of $k_0(\alpha_0)$ over $k_0$. Then for any place $v_1$ of $k_1$ lying over $v_0$, we have 
   \[  k_{1, v_1} =  k_1 \cdot F = E, \]
    so that
  the associated decomposition group at $v_1$ is isomorphic to ${\rm Gal}(E/F)$. Thus, if we let $k$ be the fixed field of this decomposition group, we obtain an extension $k_1/k$ with $v$ the unique place of $k$ lying below $v_1$ and $v$ is inert in $k_1$. Then we have    $k_v \cong F$ and $k_1 \otimes_k k_{v} \cong E$, so that ${\rm Gal}(E/F) \cong {\rm Gal}(k_1/k)$.
 \end{proof} 
    \vskip 5pt
  
  \subsection{\bf Globalizing the groups.}
  Next we consider the question of globalizing the pair $P_F \subset H_F$. We have:
  \vskip 5pt
  
  \begin{lemma}  
  Given a parabolic $F$-subgroup $P_F \subset H_F$, one can find:
  \vskip 5pt
  
  \begin{itemize}
  \item   a global function field $k$ with a place $v_0$ such that $k_{v_0} \cong F$;
  \vskip 5pt
  
 \item a pair $P \subset H$ over $k$ such that $P$ is a parabolic $k$-subgroup of a connected reductive group $H$ over $k$ with $H_{v_0} \cong H_F$ and $P_{v_0} \cong P_F$. 
  \end{itemize} 

\noindent  Moreover, if $Z$ is the identity component of the center of $H$, so that $Z_{v_0} \cong Z_F$, then one can ensure that the $k$-rank of $Z$ is equal to the $F$-rank of $Z_F$.
     \end{lemma}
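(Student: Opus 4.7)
The plan is to construct the globalization in stages: first the field, then a quasi-split form, then an inner twist to match $H_F$ at $v_0$, then the parabolic, and finally verify the central rank.

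First, let $E/F$ be a finite Galois extension that splits $H_F$ (hence also $P_F$, $Z_F$ and a maximal $F$-torus of $H_F$). Applying Lemma~\ref{L:krasner}, I choose a finite Galois extension $k_1/k$ of global function fields and a place $v_0$ of $k$ such that $k_{v_0}\cong F$, $k_1\otimes_k k_{v_0}\cong E$, and the natural map $\Gal(E/F)\to\Gal(k_1/k)$ is an isomorphism.

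Next, globalize the quasi-split inner form $H_F^*$ of $H_F$. Since $H_F^*$ is determined by a based root datum $\Psi$ together with a pinned outer action $\rho_F:\Gal(E/F)\to\Aut(\Psi)$, transferring $\rho_F$ along the isomorphism $\Gal(k_1/k)\cong\Gal(E/F)$ yields, by the standard Galois-descent construction of quasi-split groups, a quasi-split $k$-group $H^*$ split by $k_1$ with $H^*_{v_0}\cong H_F^*$. To pass from $H^*$ to a form $H$ whose localization at $v_0$ is $H_F$, I must globalize the inner-twist class $\xi_F\in H^1(F,H_F^{*,\mathrm{ad}})$ defining $H_F$; that is, produce $\xi\in H^1(k,H^{*,\mathrm{ad}})$ whose restriction to $v_0$ equals $\xi_F$. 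Surjectivity at a single place for this restriction map is a standard fact for reductive groups over global fields — it can be deduced from the Hasse principle for simply connected forms applied to the central isogeny $1\to Z^*\to H^{*,\mathrm{sc}}\to H^{*,\mathrm{ad}}\to 1$, or constructed directly by writing $\xi_F$ through a finite Galois quotient of $\Gal(\overline{F}/F)$ and realizing that quotient globally via Chebotarev. Let $H$ be the inner twist of $H^*$ by such a $\xi$; then $H_{v_0}\cong H_F$.

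For the parabolic, I use that conjugacy classes of parabolic $k$-subgroups of a reductive $k$-group correspond to Galois-stable subsets of the absolute simple roots satisfying the appropriate compatibility with the inner-twist class. The subset of the Dynkin diagram determined by $P_F$ is $\Gal(E/F)$-stable and compatible with $\xi_F$; the same subset, viewed with the $\Gal(k_1/k)$-action and the class $\xi$, defines a parabolic $k$-subgroup $P\subset H$ with $P_{v_0}\cong P_F$. For the central rank, note that the identity component $Z$ of the center of $H$ is a $k$-torus whose cocharacter lattice, together with its $\Gal(k_1/k)$-action, coincides under our identifications with the cocharacter lattice of $Z_F$ with its $\Gal(E/F)$-action. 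The $k$-rank of $Z$ (resp.\ the $F$-rank of $Z_F$) is the rank of the Galois-fixed sublattice, so the two ranks agree.

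The main obstacle is the globalization of the inner-form class, since over function fields the Galois cohomology of adjoint groups does not in general satisfy the full Hasse principle. The crucial point, however, is that I only need surjectivity at a \emph{single} place — a much weaker requirement, which follows from the exact-sequence argument indicated above or from direct local-to-global lifting (as for Brauer classes and the analogous invariants classifying inner forms of classical groups).
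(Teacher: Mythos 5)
Your overall architecture (Krasner for the field, based root datum with transferred outer action for the quasi-split form, inner twisting, then the parabolic and the central rank) matches the paper's proof, and the field, quasi-split, and central-rank steps are fine. But there is a genuine gap at the point where you produce the parabolic: you globalize the inner-twist class $\xi_F$ to an \emph{arbitrary} class $\xi\in H^1(k,H^{*,\mathrm{ad}})$ restricting to $\xi_F$ at $v_0$, and then assert that the $\Gal$-stable subset of simple roots determined by $P_F$ ``defines a parabolic $k$-subgroup $P\subset H$'' because it is ``compatible with $\xi_F$.'' Compatibility with $\xi_F$ is a purely local statement at $v_0$; whether $H=H^*_\xi$ possesses a $k$-rational parabolic of that type is a condition on the \emph{global} class $\xi$ (it amounts to $\xi$ lying in the image of $H^1(k,P^{*}_{\mathrm{ad}})\to H^1(k,H^{*,\mathrm{ad}})$). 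A lift $\xi$ chosen with no constraint away from $v_0$ can fail this: already for inner forms of $\GL_n$, a global division algebra whose index at some auxiliary place exceeds the local index at $v_0$ yields a $k$-group with strictly fewer conjugacy classes of $k$-parabolics than its localization at $v_0$, so the required $P$ need not exist over $k$.

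The paper avoids this by globalizing the \emph{pair} rather than the group: since parabolics are self-normalizing, $(H_F,P_F)$ is classified by a class in $H^1(F,P'_{F,\mathrm{ad}})\cong H^1(F,M'_{F,\mathrm{ad}})$ for the Levi $M'_{F,\mathrm{ad}}$, and one shows that $H^1(k,M'_{\mathrm{ad}})\to H^1(F,M'_{F,\mathrm{ad}})$ is surjective. This is done via the exact sequence $1\to M'_{\mathrm{ad},\mathrm{der}}\to M'_{\mathrm{ad}}\to A\to 1$ with $A$ a split torus (so $H^1$ of $A$ vanishes by Hilbert 90), reducing to surjectivity of $H^1(k,M'_{\mathrm{ad},\mathrm{der}})\to H^1(F,M'_{F,\mathrm{ad},\mathrm{der}})$ for the semisimple group $M'_{\mathrm{ad},\mathrm{der}}$, which is the Borel--Harder theorem in characteristic $0$ and the theorem of Th\v{a}\'ng--T\^an in positive characteristic. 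Twisting by a cocycle valued in $P'_{\mathrm{ad}}$ automatically carries the parabolic along. To repair your argument you should replace your unconstrained lift of $\xi_F$ by a lift through $H^1(k,P^{*}_{\mathrm{ad}})$ in exactly this way; your appeal to the Hasse principle for simply connected groups does not by itself produce such a constrained lift.
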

  \vskip 5pt
  
  \begin{proof}
  Assume first that $H_F$ is a quasi-split group.
  Let $H_s$ be the split form of $H_F$ which is a Chevalley group defined over $\Z$. 
Fix a Borel subgroup $B_s$ of $H_s$ containing a maximal split torus $T_s$.    Then $H_s$ determines a based root datum 
\[  \Psi = \Psi(H_s, T_s, B_s) =  (X(T_s), \Delta(T_s, B_s), Y(T_s), \Delta(T_s, B_s)^{\vee})   \]
where $X(T_s)$ and $Y(T_s)$ denote the character and cocharacter groups of $T_s$ respectively, whereas $\Delta(T_s, B_s)$ and $\Delta(T_s, B_s)^{\vee}$ denote the set of simple roots and simple corrupts respectively.
 The outer automorphism  group ${\rm Out}(H_s)$ of $H_s$ is a constant group scheme (defined over $\Z$) which is naturally isomorphic to ${\rm Aut}(\Psi)$.   
\vskip 5pt

Now $H_F$ corresponds to an element in the Galois cohomology set $H^1(F, {\rm Out}(H_s))$.  In fact, if $H_s$ is split by the finite Galois extension $E$ of $F$, $H_F$ determines an element in 
$H^1( {\rm Gal}(E/F),   {\rm Out}(H_s))$.  We pick a 1-cocycle $c: {\rm Gal}(E/F) \longrightarrow {\rm Out}(H_s)$  representing the element $H_F$; it is simply a group homomorphism. This induces an action of ${\rm Gal}(E/F)$ on the based root datum $\Psi$. Much of the structure of $H_F$ is controlled by the Galois module $\Psi$. For example, the $F$-rank of $Z_F$ is the dimension of the ${\rm Gal}(E/F)$-fixed space in $Y(Z_F) \otimes_{\Z} \Q \subset Y(T_F) \otimes_{\Z} \Q$. Moreover, conjugacy classes of parabolic $F$-subgroups of $H_F$ are in bijection with subsets of  ${\rm Gal}(E/F)$-orbits on $\Delta(T_s, B_s)$. 
\vskip 5pt

By  Lemma \ref{L:krasner}, we can find a finite Galois extension $k_1$ of $k$ and a place $v_0$ of $k$ such that
$k_1 \otimes_k k_{v_0}  = k_1 \otimes_k F  \cong E$ and  ${\rm Gal}(k_1/k)$ is naturally isomorphic to ${\rm Gal}(E/F)$.  By composition with this isomorphism, $c$ gives rise to a 1-cycle ${\rm Gal}(k_1/k) \longrightarrow {\rm Out}(H_s)$. 
This in turn gives rise to a quasi-split group $H$ over $k$, containing a pair $T \subset B$ of maximal torus contained in a Borel $k$-subgroup, which  globalizes  $H_F$, $T_F$ and $B_F$. 
 Moreover, the corresponding action of ${\rm Gal}(k_1/k)$ on the based root datum $\Psi$ is the same as that of ${\rm Gal}(E/F)$ (under the isomorphism of the two Galois groups).
Thus, the $k$-rank of $Z$ is the same as the $F$-rank of $Z_F$.  Finally, since parabolic subgroups of a quasi-split group are in bijection with subsets of Galois orbits on the set of simple roots in $\Psi$, there is a parabolic subgroup $P$ of $H$ 
whose localisation at $v_0$ is $P_F$.  This proves the lemma for $H_F$ a quasi-split group. 
\vskip 10pt

Now suppose that $H_F$ is an inner form of a quasi-split  group $H_F'$, so that $H_F$ gives rise to an ${\rm Aut}(H_F')$-orbit  in $H^1(F, H'_{F, ad})$, where $H'_{F, ad}$ is the adjoint group of $H_F'$. 
The quasi-split group $H'_F$ will contain a parabolic subgroup $P'_F$ which is a form of $P_F$. By what we showed above, we can 
find a global field $k$ with a place $v_0$ with $k_{v_0} \cong F$ such that we may  globalize the pair $P_F' \subset H_F'$ to $P' \subset H'$ as in the lemma.
\vskip 5pt

It is known that the natural map
\[  H^1(k,  H'_{ad})  \longrightarrow H^1( F, H'_{F,ad})  \]
is surjective. In characteristic $0$, this is a result of Borel-Harder \cite[Theorem 1.7]{BH}, which has been extended to positive characteristic by N. Q. Th\v{a}\'ng and N.D. T\^an \cite[Theorem 3.8.1]{TT}.
This shows that one can globalize $H_F$ to a $k$-group $H$. However, we need to be more careful if we want to globalize the parabolic subgroup $P_F$ as well.  
\vskip 5pt

For this, let ${\rm Inn}(H'_F, P'_F)$ denote the inner automorphism group of the pair $P'_F \subset H'_F$. Since parabolic subgroups are self-normalizing,  
  ${\rm Inn}(H'_F, P'_F)  =  P'_{F,ad}$ (the image of $P'_F$ in $H'_{F,ad}$). Over $k$, one similarly has ${\rm Inn}(H', P') = P'_{ad}$.
  Then we need to show that the map
  \[  H^1(k, P'_{ad})  \longrightarrow H^1(F,  P'_{F, ad}) \]
  is surjective.  
  But if $M'_{F,ad}$ is the Levi factor of $P'_{F,ad}$, then 
  \[  H^1(F, P'_{F, ad}) \cong  H^1(F, M'_{F, ad})  \]
  and likewise over $k$.  
  Hence we need to show the surjectivity of
   \[  H^1(k, M'_{ad})  \longrightarrow H^1(F,  M'_{F, ad}). \]

  \vskip 5pt

  Let $M'_{F, ad, der}$ be the derived group of $M'_{F, ad}$ so that  $M'_{F, ad, der}$ is semisimple and 
  $A_F :=  M'_{F, ad}/ M'_{F, ad, der}$ is a split torus. One has the analogous objects over $k$. Then the long exact sequence in Galois cohomology gives rise to a commutative diagram 
\[  \begin{CD}
H^1(k, M'_{ad,der})  @>>>  H^1(k, M'_{ad}) @>>> H^1(k, A) =  0  \\
@VVV   @VVV     \\
H^1(F, M'_{F,ad,der})  @>>>  H^1(F, M'_{F, ad}) @>>>H^1(F, A_F) =   0. \end{CD} \]
The first vertical arrow is surjective by the result of Th\v{a}\'ng-T\^an  \cite[Theorem 3.8.1]{TT} alluded to above. It follows that the second vertical arrow is also surjective, so that $P_F \subset H_F$ can be globalised to $P \subset H$.  
Moreover, since inner automorphisms act as identity on the center of $H_F'$ or $H'$, it is clear that the $F$-rank of $Z_F$ is the same as the $k$-rank of $Z$.
This proves the lemma.
     \end{proof}
  
  \vskip 10pt
  
  \subsection{\bf Globalizing the character $\chi_F$.}
  By the above lemma, we now have a pair $P = M \cdot N \subset H_0$ over $k$ globalizing $P_F \subset H_F$ over $F$, with $N$ the unipotent radical of $P$. Set $W = \Hom(N, \G_a)$ which is a vector group. 
  If we fix a nontrivial character $\psi:  k \backslash \A \longrightarrow \C^{\times}$, 
  then composition with $\psi = \prod_v \psi_v$ gives an identification 
  \[   W_{k} =  \Hom_k(N,  \mathbb{G}_a)  \cong  \{ \text{unitary characters of $N(k) \backslash N(\A)$} \}. \]
  Similarly, composition with $\psi_{v_0}$ gives an identification
  \[ W_F   = \Hom_F(N_F,  \mathbb{G}_a)   \cong  \{ \text{unitary characters of $N_F$} \}\]
  so that extracting the $v_0$-component of an automorphic unitary character of $N$  corresponds to the natural inclusion $W_{k} \subset W_F$. Since the $M_F$-orbit of $\chi_F$ is open (in the Zariski topology of $V$ 
  and hence in the $v_0$-adic topology of $V_F$), and $W_{k}$ is dense in $W_F$, the $M_F$-orbit of $\chi_F$ contains an element of $W_{k}$. Thus, there is an automorphic unitary character $\chi$ of $N$ whose local component at $v_0$ 
  is in the same $M_F$-orbit as $\chi_F$. 
  \vskip 5pt

 \subsection{\bf Globalizing central character.}
 Finally, we need to globalize the central character $\omega_F$. Recall that we have globalised $H_F$ to $H$ over $k$ so that the $k$-rank of the connected center $Z$ of $H$  is the same as the $F$-rank of $Z_F$. 
 
  \vskip 5pt

 \begin{lemma}  \label{L:char}
There exists an automorphic character $\omega$ of $Z$ satisfying:
  \vskip 5pt
  \begin{itemize}
  \item $\omega_{ v_0} = \omega_F$;
  \item $\omega$ is trivial on the compact group 
  \[  \prod_{v \in T} Z(k_{v})^1  \times \prod_{v \notin T \cup \{v_0\}}  Z(k_{v})^0,\]
  \end{itemize}
  where $T$ is some nonempty finite set of places of $k$, $Z(k_{v})^0$ is the maximal compact subgroup of $Z(k_{v})$ and $Z(k_{ v})^1$ is its pro-$p$ radical.
\end{lemma}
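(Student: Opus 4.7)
The plan is to construct $\omega$ by first defining it as a character on a large open subgroup of $Z(\A)$ and then extending it to all of $Z(\A)$ via the standard Pontryagin duality fact that continuous unitary characters on closed subgroups of a locally compact abelian group extend to the whole group. For each $v \ne v_0$, let $Z(k_v)^1$ denote the pro-$p$ radical of the maximal compact subgroup $Z(k_v)^0$, and set
\[
\mathcal{K}_0 \;=\; \prod_{v \ne v_0} Z(k_v)^1,
\]
an open compact subgroup of $Z(\A^{v_0})$ which we view inside $Z(\A)$ by giving it trivial $v_0$-component. Consider the subgroup
\[
H \;:=\; Z(k_{v_0}) \cdot \mathcal{K}_0 \cdot Z(k) \;\subset\; Z(\A),
\]
which is open (since $Z(k_{v_0}) \cdot \mathcal{K}_0$ is open) and hence closed. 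I will define a continuous character $\omega_0$ on $H$ by the rules $\omega_0|_{Z(k_{v_0})} = \omega_F$, $\omega_0|_{\mathcal{K}_0} = 1$, and $\omega_0|_{Z(k)} = 1$, and then extend $\omega_0$ to $Z(\A)$.

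The well-definedness of $\omega_0$ amounts to checking compatibility of these prescriptions on overlaps. Since $Z(k_{v_0}) \cap \mathcal{K}_0 = \{1\}$ in $Z(\A)$ (trivial versus nontrivial $v_0$-component), the only nontrivial compatibility is that $\omega_F$ vanishes on
\[
\Gamma^1 \;:=\; Z(k) \cap \bigl( Z(k_{v_0}) \cdot \mathcal{K}_0 \bigr) \;=\; \{\, \gamma \in Z(k) : \gamma \in Z(k_v)^1 \text{ for all } v \ne v_0 \,\}.
\]
The crux of the proof will be to establish $\Gamma^1 = \{1\}$, which is where the main (though mild) difficulty lies. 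To see this, choose a faithful embedding $\iota : Z \hookrightarrow \GL_n$ over $k$. For almost all $v$, the torus $Z$ is unramified at $v$ and $\iota$ extends to a closed immersion over $\mathcal{O}_v$; for such $v$, the condition $\gamma \in Z(k_v)^1$ forces $\iota(\gamma) \equiv e \pmod{\mathfrak{m}_v}$. Hence a nontrivial $\gamma \in \Gamma^1$ would yield a nonzero rational function on $Y$ (some matrix entry of $\iota(\gamma) - e \in k^{n^2}$) with positive valuation at all but finitely many places, contradicting the finiteness of zeros of a nonzero rational function on the smooth projective curve $Y$.

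With $\Gamma^1 = \{1\}$, the character $\omega_0$ is well-defined on $H$ and continuous (since $Z(k)$ is discrete in $Z(\A)$, so the $Z(k)$-component in any local decomposition is locally constant). Pontryagin duality then produces a continuous unitary extension $\omega$ of $\omega_0$ to $Z(\A)$. By construction, $\omega$ is trivial on $Z(k)$, satisfies $\omega_{v_0} = \omega_F$, and is trivial on $Z(k_v)^1$ for every $v \ne v_0$. Since $\omega$ is automorphic and hence unramified for almost all $v$, the set $T$ of $v \ne v_0$ where $\omega_v$ is nontrivial on $Z(k_v)^0$ is finite; enlarging $T$ by a single place if necessary to guarantee nonemptiness, one obtains the desired character.
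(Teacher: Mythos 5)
Your computation of the intersection $\Gamma^1 = \{1\}$ is fine and parallels the paper's kernel computation, but the extension step has a genuine gap that cannot be repaired as written. First, $\mathcal{K}_0 = \prod_{v\ne v_0} Z(k_v)^1$ is \emph{not} open in $Z(\A^{v_0})$: at infinitely many places the pro-$p$ radical $Z(k_v)^1$ has nontrivial index in $Z(k_v)^0$ (e.g.\ index $q_v-1$ for $\G_m$), and an infinite product of proper open subgroups of the $Z(k_v)^0$ is compact but of infinite index in $\prod_v Z(k_v)^0$, hence not open. So your subgroup $H = Z(k)\cdot Z(k_{v_0})\cdot \mathcal{K}_0$ is not open, and the justification ``open hence closed'' collapses; likewise the claimed local constancy of the $Z(k)$-component is not automatic. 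More seriously, $H$ need genuinely fail to be closed: since $Z(k_{v_0})$ is noncompact, its image in $Z(k)\backslash Z(\A)$ can be non-closed (dense in a larger subgroup), and then no continuous extension of $\omega_0$ exists. This is not a removable technicality — the lemma is \emph{false} without the standing hypothesis, arranged in the preceding globalization step, that the split $k$-rank of $Z$ equals the split $F$-rank of $Z_F$ (see the paper's Remark (i) following the lemma: for $Z=\U(1)$ anisotropic over $k$ but split at $v_0$, the quotient $Z(k)\backslash Z(\A)$ is compact, so most characters of $Z(k_{v_0})\cong k_{v_0}^\times$ do not globalize). Your argument never invokes this rank hypothesis, so it would ``prove'' the statement in that counterexample; the error is located exactly at the closedness/extension step.

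The paper avoids this by extending a character only from a \emph{compact} (hence closed) subgroup, namely the image of $\prod_{v\in T} Z(k_v)^1 \times \prod_{v\notin T} Z(k_v)^0$, prescribing $\omega_F$ only on the maximal compact $Z(k_{v_0})^0$ at $v_0$. The discrepancy $\omega'_{v_0}/\omega_F$ then lives on the free part $Z(k_{v_0}) \twoheadrightarrow \Z^m$, and is killed by twisting with global unramified characters $|\cdot|_{\A}^{s_1}\times\cdots\times|\cdot|_{\A}^{s_m}$ pulled back along a surjection $q\colon Z \to \G_m^m$ whose kernel is anisotropic at $v_0$ — and such a $q$ exists precisely because of the rank equality. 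To salvage your approach you would have to do the same: prescribe $\omega_F$ only on $Z(k_{v_0})^0$ in the first step, and supply the unramified twist at the end.
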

  
  \vskip 5pt
  \begin{proof}
  The proof is an elaboration of  that of \cite[Lemma 3]{P}.   To construct $\omega_0$, consider the natural map
  \[    i:  \prod_{v \in T} Z(k_{ v})^1  \times \prod_{v \notin T}  Z(k_{v})^0 \longrightarrow Z(k) \backslash Z(\A). \]
  The kernel ${\rm Ker}(i)$ is a finite group and  we shall show that it is trivial.
  Choose a splitting field $E$ of $Z$ and regard
  \[  Z(k)  \hookrightarrow Z(E)  \cong (E^{\times})^r, \]
  so that each element $z \in Z(k)$ is determined by $r$ coordinates $z_j \in E^{\times}$. If $z \in {\rm Ker}(i)$, then $z$ lies in the maximal compact subgroup of $Z(k_{v})$ for all $v$ and hence lies in $(\mathcal{O}_{E,w}^{\times})^r$ for all places $w$ of $E$. The coordinates $z_j$ of $z$ are thus constant functions on the smooth projective curve $\tilde{Y}$ with function field $E$. However, at places $w$ lying over $v \in T$, $z_j \in  1+ \varpi_w \mathcal{O}_{E,w}$, so that $z_ j$ takes value $1$ at such $w$. This implies that $z_j  =1$, so that ${\rm Ker}(i)$ is trivial,  as desired. 
  \vskip 5pt
  
  Since $i$ is injective and its image is compact and hence a closed subgroup of $Z(k) \backslash Z(\A)$, we can find a character $\omega'$ of 
   $Z(k) \backslash Z(\A)$ whose restriction to the image of $i$ is  
   \[  \omega_F|_{Z(k_{v_0})^0}  \otimes  \text{(the trivial character of  $\prod_{v \in T} Z(k_{ v})^1  \times \prod_{v \notin T \cup \{v_0 \}}  Z(k_{v})^0$). } \]
  \vskip 5pt
  
  Let $q: Z \longrightarrow \mathbb{G}_m^m$ be a surjective morphism of algebraic tori over $k$ whose kernel is anisotropic over $v_0$ (possible since the split $k$-rank of $Z$ is the same as its split $F$-rank).  Then $\omega_{v_0}' / \omega_F$  factors through
  \[  Z(k_{v_0}) \longrightarrow (k_{v_0}^{\times})^m  \longrightarrow \Z^m. \]
 Twisting $\omega_0'$ by the pullback to $Z(\A)$ of a character of the form $| -|_{\A}^{s_1} \times \cdots \times |-|_{\A}^{s_m}$, we find a character $\omega$ of $Z(k) \backslash Z(\A)$ satisfying $\omega_{ v_0} = \omega_F$, as desired.     
          \end{proof}

 \vskip 5pt
 
  \noindent{\bf Remark:}  (i)  It is necessary to know that  the split $k$-rank of the torus $Z$ is the same as the split $F$-rank of $Z_F$ above.
Consider the case when $T = \U(1)$ is an anisotropic torus of dimension $1$ over $k$ and suppose that $v_0$ is a place when $T$ splits so that $T(k_{v_0})  \cong k_{v_0}^{\times} \cong \mathcal{O}_{v_0}^{\times} \times \Z$. The irreducible representations of $T(k_{v})$ are classified by a discrete set of parameters (giving a character of the compact group $\mathcal{O}_v^{\times}$) and a continuous one (giving the image of $1 \in \Z$).  On the other hand, since $T(k) \backslash T(\A)$ is compact, its characters are classified by a discrete set of parameters. There are simply too many degrees of freedom at the place $v_0$ for every character of $T(k_{v_0})$ to be globalizable to a character of $T(k) \backslash T(\A)$. 
 \vskip 10pt
 
 (ii) In the proof of Lemma \ref{L:char}, instead of insisting that $\omega$ is trivial on $\prod_{v \in T} Z(k_v)^1$, we could have stipulated that $\omega$ restricts to any given character of 
 $\prod_{v \in T} Z(k_v)^1$. For example, one may require $\omega$ to be highly ramified at places in $T$. Then the proof of Lemma \ref{L:char} shows that  one can globalize $\omega_F$ to 
 an automorphic character which is highly ramified at places in $T$ but unramified outside $\{v_0 \} \cup T$. 
 
 \vskip 10pt
 
\subsection{\bf Proof of the corollary.}
We are now ready to complete the proof of  Corollary \ref{T:main2}. 
Let $k'$ be a finite Galois extension of $k$ which splits completely at $v_0$; suppose that  $v_0$ splits into $a$ different places $w_1, \ldots, w_a$ of $k'$.  
We may then base change the data $ (H, Z,  N, \chi, \omega)$ to $k'$.  This puts us in a position to apply Theorem \ref{T:main} and the proof of Corollary \ref{T:main2} is complete.

 \vskip 10pt

 \section{\bf Proof of Theorem \ref{T:main3}}  

In this section, we give the proof of Theorem \ref{T:main3}. The proof is a nontrivial refinement of that of \cite[Theorem 4.1]{PSP}, but instead of appealing to the relative
trace formula \cite[Theorem  4.5]{PSP} as a blackbox, we simply use the Poincar\'e series argument in the proof of Theorem \ref{T:main}. Indeed, the relative trace formula argument is simply a Poincar\'e series argument, and our treatment makes the argument in \cite{PSP} somewhat more transparent.
\vskip 5pt

By technical assumption (a), one can find a semisimple algebraic representation (over $k$)
\[  \iota:  H \longrightarrow  \GL(V)  \]
such that $R$ is the stabilizer of a vector $x_0 \in V(k)$ and ${\rm Lie}(R)$ is the infinitesimal stabilizer of $x_0$.  
Let $\mathfrak{X} \cong H/R$ be the $H$-orbit of $x_0$, so that $\mathfrak{X}$ is a locally closed subvariety of $V$ \cite[Prop. 6.7 and Theorem 6.8]{B}.  
Let $E$ be a splitting field of $H$, so that $\iota$ induces
\[  \iota_E:  H_E \longrightarrow \GL(V_E). \]
Now we note the following lemma, which is the only place where the semisimplicity of $\iota$ is used.
\vskip 5pt

\begin{lemma}
There is an $E$-basis of $V_E$ consisting of vectors, each of which is fixed by some maximal unipotent subgroups of $H_E$. 
\end{lemma}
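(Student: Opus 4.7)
The plan is to use the semisimplicity hypothesis to reduce to the case of an irreducible $H_E$-representation, and then exploit the fact that a split reductive group acts transitively on its set of Borel subgroups (hence on its maximal unipotent subgroups).

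First, since $\iota$ is semisimple, so is $\iota_E$, and we get a decomposition $V_E = \bigoplus_i V_i$ into irreducible $H_E$-subrepresentations. It suffices to exhibit, in each $V_i$, a basis consisting of vectors each fixed by some maximal unipotent subgroup of $H_E$; the union of such bases will then be an $E$-basis of $V_E$ of the required type. This reduction is the easy part.

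Next, fix an irreducible summand $V_i$ and let
\[ S_i = \{\, v \in V_i : v \text{ is fixed by some maximal unipotent subgroup of } H_E \,\}. \]
I would first check that $S_i$ is nonempty: choose a Borel subgroup $B \subset H_E$ with unipotent radical $U$; by the Lie--Kolchin theorem the solvable group $B$ fixes a line in $V_i$, and any nonzero vector on that line is fixed by $U$, which is a maximal unipotent subgroup of $H_E$. Second, $S_i$ is stable under the action of $H_E$, because for $g \in H_E$ and $v \in S_i$ fixed by a maximal unipotent $U$, the vector $gv$ is fixed by $gUg^{-1}$, which is again a maximal unipotent subgroup (conjugates of maximal unipotent subgroups are maximal unipotent, since $H_E$ acts transitively on its Borel subgroups and hence on their unipotent radicals).

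Consequently, the $E$-linear span $\operatorname{span}_E(S_i) \subset V_i$ is a nonzero $H_E$-stable subspace, hence equals $V_i$ by irreducibility. Therefore one can extract from $S_i$ an $E$-basis of $V_i$, completing the construction. The main (and only) nontrivial input is the semisimplicity of $\iota$, which is precisely what the hypothesis (a) of Theorem \ref{T:main3} is designed to provide; without it, one could not guarantee that $V_E$ decomposes into irreducibles, and the span argument would fail.
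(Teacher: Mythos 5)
Your proof is correct and follows essentially the same route as the paper: reduce to an irreducible summand via semisimplicity, produce one vector fixed by a maximal unipotent subgroup (your Lie--Kolchin line is exactly the existence of a highest weight vector), and use $H_E$-conjugation plus irreducibility to see that such vectors span, hence contain a basis. The only cosmetic difference is that the paper spans by the orbit $\{h\cdot v\}$ of a single highest weight vector rather than by the full set $S_i$; the argument is the same.
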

\vskip 5pt

\begin{proof}
One easily reduces to the case when $V_E$ is irreducible. Let $v \in V_E$ be a highest weight vector with respect to a maximal unipotent subgroup $U_E \subset H_E$.  
Since the set $\{ h \cdot v: h \in H(E) \}$ is a spanning set of $V$, a subset of it is a basis, and the vector $h \cdot v$ is fixed by the maximal unipotent subgroup $h \cdot U_E \cdot h^{-1}$.
\end{proof}

\vskip 5pt

We fix an $E$-basis $\mathcal{B}^* = \{ e^*_1,...,e^*_n\}$ of $V^*_E$ as in the lemma, and let $\mathcal{B} = \{ e_1,...,e_n\}$ be the dual basis for $V_E$. 
The basis $\mathcal{B}$   gives an isomorphism $\GL(V_E) \cong \GL_n(E)$, and defines an $\mathcal{O}_w$-structure for $\GL(V_E)$ and $H_E$ for each place $w$ of $E$, as well as an $\mathcal{O}_v$-structure for $H$ for each place $v$ of $k$.
 As in \S \ref{SS:split-ss} (using \cite[\S 3.9]{T}),  let $S$ be a finite set of places of $k$ such that for all $v \notin S \cup S_0$,
\vskip 5pt
\begin{itemize}
\item $x_0 \in V(\mathcal{O}_w)$ for all $w$ lying over $v$;
\vskip 5pt

\item there is a maximal unipotent subgroup $U_{e_i}$ fixing each $e_i$, and is smooth over $\mathcal{O}_w$ for all $w$ lying over $v$;

\vskip 5pt
\item the natural map
\[  H \hookrightarrow {\rm Res}_{E/k} H_E \longrightarrow \GL(V_E) \]
is a map of smooth reductive group schemes over $\mathcal{O}_v$. 
\vskip 5pt

\item  the representation $\iota$ induces a rational representation $\iota_v$ over the residue field $\kappa_v$. 
\vskip 5pt

\item  $R$ is smooth over $\mathcal{O}_v$ and $R \hookrightarrow H$ is defined over $\mathcal{O}_v$;
\vskip 5pt

\item $\chi_v$ is trivial on $R(\mathcal{O}_v)$.
\end{itemize}
\vskip 5pt

For each $e \in \mathcal{B}$, fix  a maximal unipotent subgroup $U_e$ of $H_E$ fixing $e^*$. 
We choose a finite set $S_e$ of places $v $ of $k$ which split completely in $E$, such that $S_e$  is disjoint  from $S \cup S_0$. We also ensure that the $S_e$'s are pairwise disjoint as $e$ ranges over $\mathcal{B}$.  For $v \in S_e$, one has a commutative diagram
\[  \begin{CD}
H(\mathcal{O}_v)  @>>>  \prod_{w |v} H_E(\mathcal{O}_w) \cong  H(\mathcal{O}_v)^r  \\
@VVV   @VVV   \\
H(\kappa_v)  @>>>  \prod_{w|v} H_E(\kappa_w)   \cong H(\kappa_v)^r  
\end{CD} \] 
where $\kappa_v$ and $\kappa_w$ denote the residue field at the places $v$ and $w$ respectively. 
 The preimage of $U_e(\kappa_{w_1}) \times \prod_{w \ne w_1} H(\kappa_w)$ in $H(\mathcal{O}_v)$ is thus the pro-$p$ radical $J_v$  of  an Iwahori subgroup of $H(k_v)$.

\vskip 5pt
 
 We will now define a test function $f = \prod_v f_v \in C^{\infty}_c(H(\A), \chi|_Z)$ as follows:
\vskip 5pt

\begin{itemize}
\item for $v \in S_0$, let $f_v$ be a matrix coefficient of $\pi_v$ such that 
\[  \int_{Z(k_v) \backslash R(k_v)} \chi_v(r)^{-1} \cdot f_v(r)  \, dr   \ne 0. \]
\vskip 5pt

\item for $v\in S$, we shall make use of the technical assumption in Theorem \ref{T:main3} and choose an Iwahori subgroup $I_v^{der}$ of $H^{der}(k_v)$ with pro-$p$ radical $J^{der}_v$ such that $\chi_v$ is trivial on $R(k_v) \cap J_v^{der}$. Then we let $f_v$ be supported on $Z(k_v) \cdot J_v = Z(k_v)  \times J^{der}_v$ and trivial on $J^{der}_v$.
\vskip 5pt

\item for each $e \in \mathcal{B}$ and $v \in S_e$, let $f_v$
be supported on $Z(k_v)  \cdot J_v$ and  equal to $1$ on $J_v$.
\vskip 5pt

\item for all other $v$, let $f_v$ be supported on $Z(k_v)  \cdot H(\mathcal{O}_v)$ and  equal to $1$ on $H(\mathcal{O}_v)$. 
\end{itemize}

 \vskip 10pt
 
 Form the Poincar\'e series
 \[  P(f)(h)  = \sum_{\gamma \in Z(k) \backslash H(k)} f(\gamma h). \]
 Computing its $(R, \chi)$-period, one has
 \begin{align}
  W_{R,\chi}(P(f)) &= \int_{Z(\A) R(k) \backslash R(\A)} P(f) (r) \cdot \chi(r)^{-1}  \, dr   \notag \\
  &=   \sum_{\gamma \in H(k)/ R(k)}   \, \int_{Z(\A) R(k) \backslash R(\A)}  \,  \sum_{\delta \in R(k)} f( \gamma \cdot \delta \cdot r) \cdot \chi(r)^{-1}  \, dr \notag \\
  &= \sum_{\gamma \in H(k)/ R(k)}  \int_{Z(\A) \backslash R(\A)} f(\gamma r)\cdot \chi(r)^{-1}  \, dr. \notag 
  \end{align}
  Let us set
  \[  \phi_f(h)   =  \int_{Z(\A) \backslash R(\A)}   f(hr) \cdot \chi(r)^{-1}  \, dr,  \]
  so that $\phi_f$ is a compactly supported function on $H(\A) /  R(\A)  =  H(\A) \cdot x_0 \subset \mathfrak{X}(\A)$. Moreover, one has
  \[  \phi_f (x_0)   \ne 0. \]  
   
  \vskip 5pt
  
Now 
\[  W_{R,\chi}(P(f)) = \sum_{x \in H(k) \cdot x_0} \phi_f(x). \]
 In this sum, it suffices to consider 
 \[  x \in H(k) \cdot x_0   \cap {\rm supp}(\phi_f) \subset \mathfrak{X}(k). \]
\vskip 5pt

Now we have the key lemma:
\vskip 5pt
\begin{lemma}
When the $S_e$'s are sufficiently large (for all $e \in \mathcal{B}$),
   \[   H(k) \cdot x_0  \cap {\rm supp}(\phi_f)  = \{ x_0 \}. \]
  \end{lemma}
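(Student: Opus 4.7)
The plan is to adapt the proof of Lemma \ref{L:key}, replacing the ambient $\GL_n$ and its matrix entries by the representation $V$ and the coordinate functionals $e_i^*$ on $V_E$. For $\gamma \in H(k)$ with $\gamma \cdot x_0 \in \mathrm{supp}(\phi_f)$, define
\[  y_i(\gamma) := e_i^*(\gamma \cdot x_0 - x_0)  \in E,  \qquad i = 1, \ldots, n.  \]
Since $\mathcal{B}^*$ spans $V_E^*$, the conclusion $\gamma \cdot x_0 = x_0$ is equivalent to $y_i(\gamma) = 0$ for every $i$, and I will prove this by a product formula argument on the smooth projective curve $\tilde Y/\mathbb{F}_q$ with function field $E$, balancing a global pole bound against local vanishing.

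First I will carry out the local analysis of each $y_i$. Outside $T := S \cup S_0 \cup \bigcup_e S_e$, the local support conditions (combined with $Z \subset R$ fixing $x_0$ and $x_0 \in V(\mathcal{O}_w)$) force $\gamma \cdot x_0 \in V(\mathcal{O}_w)$ for every $w \mid v$, hence $y_i$ is integral there. At places $v \in S \cup S_0$, the support of $f_v$ is compact modulo $Z(k_v)$, so there is a uniform integer $M$ bounding the pole orders of every $y_i$ at every $w \mid v$. At places $v \in S_e$ for any $e \in \mathcal{B}$, the containment $J_v \subset H(\mathcal{O}_v)$ again gives integrality at every $w \mid v$.

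The crucial step, which I expect to be the main obstacle, is to extract genuine zeros of $y_i$ at the places in $S_{e_i}$. Fix $i$ and $v \in S_{e_i}$; complete splitting gives $E_{w_j} \cong k_v$ for each $w_j \mid v$, and the support condition together with $Z(k_v) \cdot x_0 = x_0$ yields $\gamma \cdot x_0 = j \cdot x_0$ for some $j \in J_v$. By the construction of $J_v$, the reduction $\bar j \in H(\kappa_v)$ sits inside $U_{e_i}(\kappa_{w_1})$ under the $w_1$-factor of the diagonal embedding. Since the basis was chosen (via the preceding semisimplicity lemma) so that $e_i^*$ is $U_{e_i}$-invariant, reducing modulo $\mathfrak{m}_{w_1}$ yields
\[  e_i^*(\gamma \cdot x_0)  \equiv  e_i^*(\bar j \cdot \bar x_0)  =  e_i^*(\bar x_0)  \equiv  e_i^*(x_0)  \pmod{\mathfrak{m}_{w_1}},  \]
so $y_i(\gamma)$ vanishes at $w_1$. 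The delicate point here is keeping the splitting combinatorics straight: only the $w_1$-factor gains a vanishing condition, while at the other $w_j \mid v$ one retains only integrality.

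Finally, the product formula forces $y_i = 0$: the pole divisor of $y_i$ on $\tilde Y$ has total degree bounded by a constant depending only on $M$, $S$, $S_0$ and $[E:k]$, whereas the zero divisor has degree at least $\sum_{v \in S_{e_i}} \deg(v)$. Exactly as in Lemma \ref{L:key}, it suffices to take a single place in $S_{e_i}$ whose residue field is large enough and which splits completely in $E$ (such places exist by Chebotarev, since $\tilde Y(\overline{\mathbb{F}}_q)$ is infinite). Running the argument for every $e_i \in \mathcal{B}$ produces simultaneous vanishing $y_1 = \cdots = y_n = 0$ and hence $\gamma \cdot x_0 = x_0$, completing the proof of the lemma.
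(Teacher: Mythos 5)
Your proposal is correct and follows essentially the same route as the paper: you introduce the coordinates $\alpha_e = \langle e^*, \gamma\cdot x_0 - x_0\rangle$, bound their poles using the compactness (mod $Z$) of the support at $S\cup S_0$ and integrality elsewhere, extract a zero at the distinguished place $w_1$ over each $v\in S_{e}$ from the containment of the reduction of $J_v$ in $U_{e}(\kappa_{w_1})$ together with the $U_{e}$-invariance of $e^*$, and conclude by the product formula on the curve with function field $E$. This matches the paper's argument, including the observation that a single completely split place with large residue field suffices for each $S_e$.
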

 \vskip 5pt
 
 \begin{proof}
 Suppose that $x \in V(k)$ lies in the intersection.  To show that $x = x_0$, it suffices to show: for each $e \in \mathcal{B}$, 
 \[ \alpha_e:=  \langle e^*, x-x_0 \rangle = 0  \in E. \]
 There is an integer $M$ such that 
 for all $v \in S_0 \cup S$, and all $w$ lying over $v$, the elements $\alpha_e$ have order of poles at most $M$ at each $w$. 
 On the other hand, for places $w$ lying over  $v \notin S \cup S_0$,  $\alpha_e \in \mathcal{O}_w$ and thus has no poles at $w$. 
 For places $v \in S_e$, however, observe that
 \[    x  =  u \cdot x_0  \mod \varpi_{w_1}    \quad \text{for some $u \in U_e(\kappa_{w_1})$.} \]
 Hence
 \begin{align}
   \alpha_e &=  \langle e^*,  u\cdot x_0  - x_0 \rangle   \mod \varpi_{w_1}   \notag \\
   &= \langle u^{-1} \cdot e^* - e^*, x_0 \rangle \mod \varpi_{w_1}  \notag \\
   &=  0  \mod \varpi_{w_1}  \notag
   \end{align}
 since $u^{-1} \cdot e^* = e^*$. Thus, we see that for each $v \in s_e$, $\alpha_e$ vanishes at some $w$ lying over $v$. It is now clear that if $S_e$ is sufficiently large, one must have $\alpha_e  = 0 \in E$. This proves the lemma.
  \end{proof}
 \vskip 5pt
 
 By the lemma, we deduce that
 \[   W_{R,\chi}(P(f))   =  \sum_{x \in H(k)\cdot x_0} \phi_f(x)  = \phi_f(x_0)  \ne 0. \]
 Moreover, $P(f)$ is fixed (at least) by the pro-$p$ unipotent radical of some Iwahori subgroup at each $v \notin S_0$. Thus, in considering the spectral decomposition of $P(f)$, we obtain a cuspidal representation $\Pi$ which globalizes the given $\pi_i$'s at $S_0$, is globally $(R, \chi)$-distinguished and whose local components $\Pi_v$ at $v \notin S_0$ belong to principal series representations induced from minimal parabolic subgroups and which has depth $0$ when restricted to $H_v^{der}$.
  \vskip 5pt
 
 This completes the proof of  Theorem \ref{T:main3}. 
 
 \vskip 10pt
 \section{\bf Stability of LS Gamma Factors}  \label{S:applications}

 In this and subsequent sections, we give some applications of the globalization theorem by combining it with the recent spectacular work of V. Lafforgue \cite{La}.  We begin by summarising the results of V. Lafforgue that we need.

\vskip 5pt

 \subsection{\bf Results of V. Lafforgue.}

 For a prime number $l \ne p$,  fix henceforth an isomorphism
 \[  \iota_l:  \overline{\Q}_l  \longrightarrow \C \]
 which allows one to compare $\overline{\Q}_l$-valued functions with $\C$-valued ones.
 Given an algebraic group $H$ over a global function field $k$, the isomorphism $\iota_l$ also induces an isomorphism
 \[  \iota_l: {^L} H(\overline{\Q}_l) \cong {^L} H(\C). \]

 \vskip 5pt
 
 Now let $\Pi$ be a cuspidal automorphic representation of $H(\A)$.  
 By V. Lafforgue \cite{La},  one can associate to $\Pi$ a continuous global Galois representation
 \[  \rho = \rho_{\Pi, l} :  {\rm Gal}(k^{{\rm sep}}/k) \longrightarrow {^L} H(\overline{\Q}_l).  \]
  We list some of its properties which will be relevant:
 \vskip 5pt
 
 \begin{itemize}
 \item[(a)]  for almost all places $v$ of $k$, the (Frobenius-semisimplification of the) local Galois representation $\rho_v$ is unramified and the image of the geometric Frobenius element
 ${\rm Frob}_v$ is equal to the Satake parameter of $\Pi_v$ (after composing with $\iota_l$).    
 \vskip 5pt
 
 \item[(b)]  the embedding $Z(H)^0 \hookrightarrow  H$ induces a morphism 
 \[ \rho_Z :  {^L} H  \longrightarrow  {^L} H /  {^L}H^0_{der} \cong {^L}Z(H)^0,   \]
 and the central character $\omega_{\Pi}$ of $\Pi$ corresponds to the map $\rho_Z \circ \rho_{\Pi}$ under the global Langlands correspondence for tori (again, after composing with $\iota_l$).
 Indeed, by (a), one knows that the two characters correspond at almost all places, and so correspond by Tchebotarev's density theorem.  
 \end{itemize}
 \vskip 5pt

For each place $v$ of $k$, one obtains a continuous (Frobenius-semisimplified) $l$-adic local Galois representation
\[  \rho_{\Pi, l,v}  :  {\rm Gal}( k_v^{{\rm sep}}/k_v) \longrightarrow {^L}H(\overline{\Q}_l). \]
By a well-known construction due to Grothendieck for $\GL(n)$  (see \cite{R}) and \cite[\S 2.1]{GR} in general, such an $l$-adic local Galois representation corresponds to a (Frobenius-semisimple) representation of the Weil-Deligne group 
\[    W_{k_v}  \times \SL_2(\overline{\Q}_l) \longrightarrow {^L}H(\overline{\Q}_l). \]
Using the isomorphism $\iota_l$, this gives a  (Frobenius-semisimple) Weil-Deligne representation    
\[  \rho_{\Pi,v}:  W\!D_{k_v} = W_{k_v} \times \SL_2(\C) \longrightarrow {^L}H(\C). \]
In the rest of this paper, we shall pass between the (Frobenius-semisimplified) $l$-adic representation $\rho_{\Pi, l, v}$ and the local L-parameter $\rho_{\Pi,v}$ without further comment.  We will also drop the adjective ``Frobenius semisimple" henceforth, as all our local representations or parameters will be assumed to be Frobenius-semisimplified. Note that Frobenius-semisimplification does not change local Artin L-factors or gamma factors. 
\vskip 10pt

\subsection{\bf Langlands-Shahidi gamma factor.}  \label{SS:LS}
We can now introduce the Langlands-Shahidi (LS) gamma factors.
Let $F$ be a local field of characteristic $p>0$, and   let $P_F = H_F \cdot N_F$ be a maximal parabolic $F$-subgroup of a connected reductive quasi-split group $G_F$, with Levi factor $H_F$ and unipotent radical $N_F$.  One has a natural inclusion of Langlands L-groups
\[  {^L}H_F \longrightarrow {^L}G_F. \]
Suppose that the adjoint action of ${^L}H_F$ on ${\rm Lie}(N_F)$ decomposes as:
\[  {\rm Lie}(N_F)  = \bigoplus_i r_i  \]
for irreducible representations $r_i$, $1 \leq i \leq m_r$, ordered as in \cite{Sh2} according to nilpotency class.   The second author has extended the  Langlands-Shahidi theory to the case of function fields \cite{L1,L2}. In particular, given an irreducible generic representation $\pi$ of $H_F$, one can attach a local gamma factor 
\[  \gamma(\pi, r_i, \psi)  \]
for each $r_i$, where $\psi$ is a nontrivial additive character of $F$. 
These LS gamma factors are $\C$-valued meromorphic functions in $\pi$ (as we shall explain shortly) and  satisfy some natural properties which characterize them uniquely \cite{L2}.
\vskip 5pt

Since $H_F$ is a maximal Levi subgroup of $G_F$, the quotient $H_F/Z(G_F)^0$ of $H_F$ by the connected center $Z(G_F)^0$ of $G_F$ has 1-dimensional split center, so that 
$\Hom_F(H_F /Z(G_F)^0, \G_m)$ is a free $\Z$-module of rank $1$. 
Let $\delta \in \Hom_F(H_F/Z(G_F)^0, \mathbb{G}_m)$ be the generator such that  the modulus character $\det ({\rm Ad}_{H_F} | {\rm Lie }(N_F))$ is a positive multiple of $\delta$.  
 For any character $\chi:  F^{\times} \longrightarrow \C^{\times}$, the composite $\chi \circ \delta$ is a 1-dimensional character of $H_F$ which is trivial on   $Z(G_F)$. In particular, for an irreducible representation $\pi$ of $H_F$, one may consider the twist $\pi \otimes (\chi \circ \delta)$. We shall denote this twisted representation simply as $\pi \otimes \chi$
so that we have $\gamma( \pi \otimes \chi, r_i, \psi)$.  The character $\delta$ corresponds to a morphism 
\[  \phi_{\delta}: \C^{\times} \longrightarrow {^L}H_F, \]
 taking value in the center of $({^L}H_F)^0$. %Then there exists a nonzero  $k_i \in \Z$ such that 
 %\[  \text{$r_i \circ \phi_{\delta} (a)  =  a^{k_i}$ for $a \in \C^{\times}$. } \]
For any character $\chi:F^{\times} \rightarrow \C^{\times}$, one then has an induced  map
\[  
   \begin{CD}
   W_F @>>> F^{\times} @>\chi>> \C^{\times} @>\phi_{\delta}>>   {^L}H_F,
   \end{CD}
\]
 which is the L-parameter for the character $\chi \circ \delta$ of $H_F$. 
 For simplicity, we shall write $\chi$ for this map as well.
 
 \vskip 5pt
 
 Recall that the set of characters of $F^{\times}$ is the countable disjoint union of 1-dimensional complex manifolds. 
 As $\chi$ varies over the characters of $F^{\times}$, the function $\chi \mapsto \gamma(\pi \otimes \chi, r_i, \psi)$ is a meromorphic function. To be more precise, 
  setting  $\chi = |-|_F^s$, for $s \in \C$, then the function
  \[  \gamma(\pi \otimes |-|_F^s,  r_i,\psi)    \in \C(q_F^{-s})  \]
  is a rational function in $q_F^{-s}$.

\vskip 5pt

\subsection{\bf Stability.}  \label{SS:stability}
We now turn towards an important stability property of LS $\gamma$-factors. This is an open problem in characteristic zero, but  see \cite{CPSS} where important cases are established. The proof here in positive characteristic extends the cases of symmetric and exterior square $\gamma$-factors studied in \cite{HL1}.
\vskip 5pt

\begin{thm}  \label{T:stability}
Let $R = r_i$ for some $i$ (in the notation of \S \ref{SS:LS}).  
Let $\pi_1$ and $\pi_2$ be two irreducible generic representations of $H_F$ with the same central character. For all sufficiently highly ramified characters $\chi$ of $F^{\times}$, 
\[  \gamma(\pi_1 \otimes \chi, R, \psi) =  \gamma(\pi_2 \otimes \chi, R, \psi) \]
 
\end{thm}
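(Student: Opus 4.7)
The plan is a global-to-local argument using the globalization Corollary \ref{T:main2}, Lafforgue's construction \cite{La} of global Galois parameters attached to cuspidal representations, and the Deligne--Henniart stability theorem for Galois $\gamma$-factors. By the multiplicativity axiom characterizing LS gamma factors \cite{L2}, we may first reduce to the case where $\pi_1$ and $\pi_2$ are generic supercuspidal.

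For each $i = 1, 2$, we apply Corollary \ref{T:main2} to globalize the triple $(G_F, P_F = H_F \cdot N_F, \pi_i)$ to $(G^{(i)}, P^{(i)} = H^{(i)} \cdot N^{(i)}, \Pi^{(i)})$ over a function field $k^{(i)}$ with a place $v_i$ satisfying $k^{(i)}_{v_i} \cong F$; here $G^{(i)}$ is quasi-split with maximal parabolic $P^{(i)}$, and $\Pi^{(i)}$ is a globally generic cuspidal representation of $H^{(i)}(\A)$ with $\Pi^{(i)}_{v_i} \cong \pi_i$ whose local components at all other places $w$ are depth-zero principal series. Using Lemma \ref{L:char}, arrange that the central character of $\Pi^{(i)}$ localizes at $v_i$ to the common central character of $\pi_1$ and $\pi_2$. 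Similarly globalize $\chi$ to a Hecke character $X^{(i)}$ with $X^{(i)}_{v_i} = \chi$ and $X^{(i)}_w$ unramified for all $w \ne v_i$. By \cite{La}, attach to $\Pi^{(i)}$ a global Galois parameter $\rho^{(i)} \colon \Gal(k^{(i), {\rm sep}}/k^{(i)}) \to {^L}H^{(i)}(\overline{\Q}_l)$, giving local parameters $\rho^{(i)}_w$ at every place $w$.

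Both Langlands--Shahidi and Grothendieck's theories furnish global functional equations
\begin{equation*}
\prod_w \gamma^{LS}(\Pi^{(i)}_w \otimes X^{(i)}_w, R, \psi_w) \; = \; 1 \; = \; \prod_w \gamma^{Gal}(R \circ \rho^{(i)}_w \otimes X^{(i)}_w, \psi_w).
\end{equation*}
At every $w \ne v_i$ the representation $\Pi^{(i)}_w$ is a depth-zero principal series induced from a tame character of a maximal torus, while $X^{(i)}_w$ is unramified; by multiplicativity of both sides together with compatibility of Lafforgue's local parameter with local class field theory for the inducing character, one deduces $\gamma^{LS}_w = \gamma^{Gal}_w$ at each such $w$. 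Dividing the two displayed product formulas yields
\begin{equation*}
\gamma^{LS}(\pi_i \otimes \chi, R, \psi) \; = \; \gamma^{Gal}(R \circ \rho^{(i)}_{v_i} \otimes \chi, \psi) \qquad (i = 1, 2).
\end{equation*}
By Deligne--Henniart stability of Galois $\gamma$-factors, for sufficiently ramified $\chi$ the right-hand side depends on $\rho^{(i)}_{v_i}$ only through $\dim R$ and $\det(R \circ \rho^{(i)}_{v_i}) = (\det R) \circ \rho^{(i)}_{v_i}$; since $\det R \colon {^L}H^{(i), 0} \to \C^\times$ is trivial on the derived subgroup, it factors through the map $\rho_Z$ of property (b), so this character is determined via local class field theory by the central character of $\Pi^{(i)}_{v_i} = \pi_i$. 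As $\pi_1$ and $\pi_2$ share a central character, the two Galois $\gamma$-factors agree and the theorem follows. The main technical obstacle is the compatibility $\gamma^{LS}_w = \gamma^{Gal}_w$ at the finitely many depth-zero but tamely ramified places: verifying that Lafforgue's local parameter there coincides with the one attached to the inducing torus character goes beyond the unramified compatibility furnished by property (a) of \cite{La} and is the most delicate input into the argument.
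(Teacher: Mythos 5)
Your overall strategy (globalize via Corollary \ref{T:main2}, invoke Lafforgue, compare the two global functional equations, finish with Deligne--Henniart stability of Galois $\gamma$-factors using that $\det R \circ \rho_{v_0}$ is pinned down by the central character) is exactly the paper's, and the final step is right. But there is a genuine gap at the finitely many places $w \in S$, $w \ne v_i$, where $\Pi^{(i)}_w$ is a \emph{ramified} depth-zero principal series, and you have correctly located it yourself: to get $\gamma^{LS}_w = \gamma^{Gal}_w$ there with $\rho^{(i)}_w$ equal to Lafforgue's local parameter, you would need local-global compatibility of $\rho^{(i)}$ at ramified places, which property (a) of \cite{La} does not give. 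Since you chose your globalized character $X^{(i)}$ to be unramified at all $w \ne v_i$, you have no way to discharge this, and the argument as written does not close.

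The paper avoids ever proving that compatibility. The trick is in the choice of the auxiliary Hecke character: using the remark after Lemma \ref{L:char}, one globalizes $\chi$ to $\mathcal{X}$ that is unramified outside $S \cup \{v_0\}$ but \emph{highly ramified at every $v \in S$}. At such $v$, multiplicativity and the torus case of the LS theory give $\gamma^{LS}(\Pi_v \otimes \mathcal{X}_v, R, \Psi_v) = \gamma^{Gal}(R\circ(\phi_v \otimes \mathcal{X}_v), \Psi_v)$ for the parameter $\phi_v$ built from the principal-series inducing data --- with no claim that $\phi_v = \rho_v$. One only knows $\rho_Z\circ\rho_v = \rho_Z\circ\phi_v$ (both compute the central character of $\Pi_v$, by property (b) plus Tchebotarev), hence $\det R\circ\rho_v = \det R\circ\phi_v$; since $\mathcal{X}_v$ is highly ramified there, stability of the \emph{Galois-theoretic} $\gamma$-factors forces $\gamma^{Gal}(R\circ(\rho_v\otimes\mathcal{X}_v),\Psi_v) = \gamma^{Gal}(R\circ(\phi_v\otimes\mathcal{X}_v),\Psi_v)$, so the ratio over $S$ collapses to $1$ and one isolates the identity $\gamma^{LS}(s,\pi\otimes\chi,R,\psi) = \gamma^{Gal}(s, R\circ(\rho_{v_0}\otimes\chi),\psi)$ at $v_0$ for \emph{arbitrary} $\chi$. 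In short: the same stability theorem you invoke only at $v_0$ must also be deployed at the auxiliary bad places, and your globalization of $\chi$ must be ramified there to make that possible. (A minor further point: the reduction from supercuspidal to general generic $\pi$ by multiplicativity should come \emph{after} establishing the identity with the Galois $\gamma$-factor, as in the paper, since Corollary \ref{T:main2} only globalizes supercuspidals.)
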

\vskip 5pt

Here, note that  the set of characters $\chi$ with a fixed conductor is a complex manifold of dimension $1$ under twisting by unramified characters, and the identity in the theorem is interpreted as an identity of meromorphic function on this complex manifold. 
\vskip5pt

\begin{proof}
  We first consider a generic supercuspidal representation $\pi$ of $H_F$. Let $\chi$ be any character of $F^{\times}$.
     By Corollary \ref{T:main2}, we may find a global function field $k$ (indeed $k = \mathbb{F}_q(t)$ will do) with  $k_{v_0}  = F$ for some place $v_0$ of $k$
     and globalize the data $(H_F \cdot N_F \subset G_F, \pi)$ to $(H \cdot N \subset G, \Pi)$ where $H \subset G$ are quasi-split and $\Pi$ is a globally generic cuspidal representation as in Corollary \ref{T:main2}. 
  \vskip 5pt
  
  Without loss of generality,    we may assume that the character $\psi$  can be globalised to a character $\Psi$ of $k \backslash \A$.     Indeed, if we fix a nontrivial character $\Psi = \prod_v \Psi_v$ of $k \backslash \A$, then the nontrivial characters of $k \backslash \A$ are of the form $\Psi_a(x)  = \Psi(ax)$ for $a \in k^{\times}$, and  the set of nontrivial characters of $F$ is of the form $\Psi_{v_0, a}(x) = \Psi_{v_0}(ax)$ for $a \in F^{\times}$.  This shows that a dense subset of nontrivial characters of $F$ can be globalized to a character of $k \backslash \A$. On the other hand, for $i=1$ or $2$, one has
  \[  \gamma(\pi_i \otimes \chi, R, \psi_a) =  \alpha(a)    \cdot  \gamma(\pi_i \otimes \chi, R, \psi),  \]
   for some character $\alpha$ of $F^{\times}$ depending only on $R$, $\chi$ and  the central character of $\pi_i$.
  This shows that if the identity to be shown in the theorem holds for one nontrivial $\psi$, then it holds for all nontrivial $\psi$.
   \vskip 5pt
 
 By V. Lafforgue \cite{La}, one has a continuous  semisimple $l$-adic global Galois representation 
 \[  \rho = \rho_{\Pi}: {\rm Gal}(k^{{\rm sep}}/k) \longrightarrow {^L}H(\overline{\Q}_l) \]
 associated to $\Pi$. Let $S$ be a nonempty finite set of places of $k$ not containing the distinguished place $v_0$  such that for all $v \notin S$ different from $v_0$, 
   \vskip 5pt
   
   \begin{itemize}
   \item   $H_v$, $\Pi_v$ and $\Psi_v$ are unramified.  
  \item $\rho_v$ is unramified and $\rho_v({\rm Frob}_v)$ is the Satake parameter of $\Pi_v$. 
  \end{itemize}
  For $v \in S$, $\Pi_v$ is nonetheless contained in a  principal series representation induced from a Borel subgroup.      
  We may globalize $\chi$ to a Hecke character $\mathcal{X}$ which is unramified outside $S \cup \{v_0\}$ and highly ramified for places in $S$ (by part (ii) of the remark following Lemma \ref{L:char}). 
Then the global functional equation from Langlands-Shahidi theory gives:,
\begin{equation} \label{E:funct1} 
\prod_{v\in S \cup \{v_0\}}   \gamma( s, \Pi_v \otimes \mathcal{X}_v , R, \Psi_v)   = \frac{L^{S \cup \{ v_0\}}( s, \Pi \otimes \mathcal{X}_v, R)}{L^{S \cup \{v_0 \}}( 1-s ,\Pi \otimes \mathcal{X}_v, R^{\vee})}, \end{equation}
 where for the purpose of this proof, we have written
 \[   \gamma( s, \Pi_v \otimes \mathcal{X}_v , R, \Psi_v) :=   \gamma(\Pi_v \otimes \mathcal{X}_v |-|_v^s , R, \Psi_v)  \]
 and likewise for the L-functions on the right-hand-side. 
 \vskip 5pt

Now consider the representation 
\[  R \circ( \rho  \otimes \mathcal{X}):  {\rm Gal}(k^{{\rm sep}}/k)  \longrightarrow  \GL(V_R),\]
where we have regard $\mathcal{X}$ as a 1-dimensional representation of $ {\rm Gal}(k^{{\rm sep}}/k) $ by global class field theory.
 After composing with the isomorphism $\iota:  \overline{\Q}_l \cong \C$, one may form the global L-function of Artin type:
 \[  L(s, R \circ( \rho \otimes \mathcal{X} )) = \prod_v  L(s,  R\circ ( \rho_v \otimes \mathcal{X}_v))  \]
 which converges for $\Re(s)$ sufficiently large.  By the work of Grothendieck-et-al, one knows that this L-function is in fact a rational function in $q^{-s}$ (and hence admits  meromorphic continuation to $\C$) and satisfies a functional equation of the form
 \[  L(s, R \circ( \rho \otimes \mathcal{X}))  = \epsilon(s, R \circ (\rho \otimes \mathcal{X} ))  \cdot L(1-s, (R \circ (\rho \otimes \mathcal{X}))^{\vee} ) \]
 for some global epsilon factor $\epsilon(s, R \circ \rho \otimes \mathcal{X})$.  It is known by work of Laumon \cite[Theorem 3.1.5.4 and Theorem 3.2.1.1]{Lau1} (see also \cite{D2}),  that the epsilon factor admits an Euler product
 \[  \epsilon(s, R \circ (\rho \otimes \mathcal{X}))  = \prod_v  \epsilon(s, R \circ( \rho_v \otimes \mathcal{X}_v), \Psi_v)  \quad \text{(a finite product)} \]
 for any character $\Psi = \prod_v \Psi_v$ of $k \backslash \A$. 
 In particular, one may define the local Galois theoretic gamma factors:
 \[  \gamma(s, R \circ( \rho_v \otimes \mathcal{X}_v), \psi_v) := \epsilon(s, R \circ (\rho_v \otimes \mathcal{X}_v,) \Psi_v) \cdot \frac{L(1-s, R^{\vee} \circ (\rho_v \otimes \mathcal{X}_v))}{L(s, R\circ( \rho_v \otimes \mathcal{X}_v))}, \] 
  and the global functional equation can be expressed as
  \begin{equation}  \label{E:funct2}
      \prod_{v \in S \cup \{v_0 \}}  \gamma(s, R\circ (\rho_v \otimes \mathcal{X}_v), \Psi_v)  =   \frac{L^{S \cup \{v_0 \}}(s, R\circ (\rho_v \otimes \mathcal{X}))}{L^{S \cup \{v_0 \} }(1-s, R^{\vee} \circ( \rho_v \otimes \mathcal{X}))}. \end{equation}
\vskip 5pt

Comparing (\ref{E:funct1}) and (\ref{E:funct2}) and using the compatibility of $\rho_v$ and $\Pi_v$ outside of $S$, we deduce that
\begin{equation} \label{E:comparison}
  \prod_{v\in S \cup \{v_0\}}   \gamma(s, \Pi_v \otimes \mathcal{X}_v, R, \Psi_v)  =  \prod_{v \in  S\cup \{v_0\}}  \gamma(s, R\circ (\rho_v  \otimes  \mathcal{X}_v), \Psi_v).  \end{equation}
In particular, we have:
 \[ \frac{\gamma(s, \pi \otimes \chi, R, \psi)}{\gamma(s, R \circ (\rho_{v_0} \otimes \chi), \psi)}  = \prod_{v \in S}  \frac{\gamma(s, R\circ (\rho_v  \otimes  \mathcal{X}_v), \Psi_v)}{ \gamma(s, \Pi_v \otimes \mathcal{X}_v, R, \Psi_v)}. \]
 For $v \in S$, the representations $\Pi_v$ is a constituent of a principal series representation $I_v(\mu_v)$ induced from the Borel subgroup. As such, by multiplicativity of LS gamma factors and their compatibility with class field theory in the case of tori \cite{L2},  one has
 \[   \gamma(s, \Pi_v \otimes \mathcal{X}_v, R, \Psi_v) = \gamma(s, R \circ (\phi_v \otimes \mathcal{X}_v,) \Psi_v) \]
 where $\phi_v$ is the composite
 \[  \phi_v :  W_k \longrightarrow {^L}T_v  \longrightarrow {^L}H_v  \]
 with the first map corresponding to the inducing data $\mu_v$ on $T(k_v)$.
 In particular, we have
\[   
 \frac{ \gamma(s, \pi \otimes \chi, R, \psi)}{ \gamma(s, R \circ( \rho_{v_0} \otimes \chi), \psi)} =  \prod_{v \in S}  \frac{\gamma(s, R\circ( \rho_v  \otimes  \mathcal{X}_v), \Psi_v)}{ \gamma(s, R \circ ( \phi_v\otimes \mathcal{X}_v),  \Psi_v)}. \]
 Now we know that
\[  \rho_Z \circ \rho_v =  \rho_Z \circ  \phi_v, \]
since both these characters correspond to the central character of $\Pi_v$. Moreover, the map 
\[ \det \circ R :  {^L}H \longrightarrow \bar{\Q}^{\times}_l \cong \C^{\times}  \]
factors through $\rho_Z$.  Thus we deduce that
\[  \det R \circ \rho_v = \det R \circ \phi_v.  \]
Since $\mathcal{X}_v$ can be as highly ramified as we wish for $v \in S$, it follows by the stability of Galois-theoretic gamma factors that  for suitable choice of $\mathcal{X}_v$  for all $v \in S$, we have
\[  
 \frac{\gamma(s, R\circ (\rho_v  \otimes  \mathcal{X}_v), \Psi_v)}{ \gamma(s, R \circ ( \phi_v\otimes \mathcal{X}_v,)  \Psi_v)}  = 1. \]
 Hence, we conclude that for arbitrary character $\chi$ of $F^{\times}$, one has
\begin{equation} \label{E:gamma}
   \gamma(s, \pi \otimes \chi, R, \psi)  =  \gamma(s, R \circ( \rho_{v_0} \otimes \chi), \psi)  \end{equation}
 for a supercuspidal representation $\pi$. The same then holds for general generic $\pi$ by induction, using the multiplicativity of LS gamma factors. 
We have thus expressed arbitrary LS gamma factors in terms of (some) Galois theoretic ones. 
\vskip 10pt

We can now complete the proof of the theorem. Given two irreducible generic representations $\pi$ and $\pi'$ of $H_F$ with the same central character, we may apply the above discussion to each of them in turn. In particular, we have the equation (\ref{E:gamma}) for $\pi$ and $\pi'$, with $\rho_{v_0}$ and $\rho'_{v_0}$ satisfying
\[  \det R \circ \rho_{v_0}  = \det R \circ \rho'_{v_0}. \]
Thus  if $\chi$ is sufficiently highly ramified, 
\[  \gamma(s, R \circ( \rho_{v_0} \otimes  \chi), \psi) =    \gamma( s, R \circ (\rho'_{v_0} \otimes  \chi ), \psi).  \]
and  one obtains the desired stability of LS gamma factors.
\end{proof}
\vskip 5pt

We record here a corollary.
\vskip 5pt

\begin{cor}  \label{C:gamma-stab}
Let $\Pi = \otimes_v \Pi_v$ be a globally generic cuspidal representation of $H(\A)$ (where $H$ is a connected reductive group over the global function field $k$) and let $\rho_{\Pi}$ be the $l$-adic global Galois representation associated to $\Pi$ by V. Lafforgue. 
Let $\mathcal{X}  = \otimes_v \mathcal{X}_v$ be a Hecke character and $\Psi = \prod_v \Psi_v$ a nontrivial character of $\A / k$.  Then for all places $v$ of $k$, 
\[    \gamma( \Pi_v \otimes \mathcal{X}_v |-|_v^s , R, \Psi_v)  =  \gamma( R \circ( \rho_{\Pi, v} \otimes \mathcal{X}_v |-|_v^s), \Psi_v). \] 
\end{cor}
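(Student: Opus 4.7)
The plan is to mimic the global-to-local argument in the proof of Theorem \ref{T:stability}, but applied directly to the given global $\Pi$ rather than to a new local-to-global construction, in order to extract the identity at an arbitrary place $v_0$. First, if $v_0$ is unramified for $\Pi$, $\mathcal{X}$, $\Psi$, $H$, and $\rho_{\Pi}$, the identity is immediate from property (a) of Lafforgue's correspondence combined with the standard unramified computation of both the LS and Galois-theoretic gamma factors.  So I may assume $v_0$ lies in the finite set $S$ of bad places for these objects.

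Second, I compare the global functional equations (\ref{E:funct1}) and (\ref{E:funct2}) for $\Pi \otimes \mathcal{X}$ on the automorphic side and for $R \circ (\rho_{\Pi} \otimes \mathcal{X})$ on the Galois side. The unramified matching of gamma factors outside $S$ cancels in the ratio, yielding the finite product identity
\[\prod_{v \in S}\gamma(s, \Pi_v \otimes \mathcal{X}_v, R, \Psi_v) = \prod_{v \in S}\gamma(s, R \circ (\rho_{\Pi,v} \otimes \mathcal{X}_v), \Psi_v).\]
To isolate the factor at $v_0$, I would twist by an auxiliary Hecke character $\mathcal{X}'$ on $k^{\times}\backslash \A^{\times}$ whose component at $v_0$ is trivial and whose components at the other places of $S$ are sufficiently highly ramified; the existence of such $\mathcal{X}'$ is provided by the approximation argument in remark (ii) following Lemma \ref{L:char}.

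With this $\mathcal{X}'$ in hand, the key intermediate claim is that for each $v \in S \setminus \{v_0\}$,
\[\gamma(s, \Pi_v \otimes \mathcal{X}_v\mathcal{X}'_v, R, \Psi_v) = \gamma(s, R \circ (\rho_{\Pi,v} \otimes \mathcal{X}_v\mathcal{X}'_v), \Psi_v).\]
To prove this I would choose a generic irreducible constituent $\sigma_v$ of a principal series representation of $H(k_v)$ whose central character agrees with that of $\Pi_v$ (obtained by inducing any extension of $\omega_{\Pi_v}$ to a Borel subgroup). By the stability of LS gamma factors proved in Theorem \ref{T:stability}, applied to the highly ramified twist $\mathcal{X}_v\mathcal{X}'_v$, the LS gamma factor of $\Pi_v$ agrees with that of $\sigma_v$. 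For $\sigma_v$, multiplicativity of LS gamma factors and their compatibility with class field theory in the toric case (both available in positive characteristic by \cite{L2}) identify its LS gamma factor with the Galois-theoretic gamma factor of $R \circ (\phi_v \otimes \mathcal{X}_v\mathcal{X}'_v)$, where $\phi_v$ is the L-parameter of $\sigma_v$. Since $\rho_Z \circ \phi_v = \rho_Z \circ \rho_{\Pi,v}$ (both restrict to the L-parameter of the common central character, by property (b) of Lafforgue) and $\det R$ factors through $\rho_Z$, we have $\det(R \circ \phi_v) = \det(R \circ \rho_{\Pi,v})$; the classical stability of Galois-theoretic gamma factors (Deligne--Henniart--Laumon) then ensures that the two Galois gamma factors coincide for the highly ramified twist $\mathcal{X}_v\mathcal{X}'_v$.

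Cancelling these matching factors at every $v \in S \setminus \{v_0\}$ from the product identity applied to $\mathcal{X} \cdot \mathcal{X}'$ leaves precisely the desired identity at $v_0$, and since $\mathcal{X}'_{v_0} = 1$ this is exactly the statement of the corollary. The main obstacle is the simultaneous deployment of stability on both the automorphic and Galois sides, which relies crucially on the prior matching of central characters via Lafforgue's property (b), since stability only sees the parameter through the determinant of $R$. Constructing the auxiliary $\mathcal{X}'$ with the prescribed local behavior is a standard density/approximation exercise but must be handled with some care in the function field setting.
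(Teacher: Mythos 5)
Your proposal is correct and follows essentially the same route as the paper: compare the automorphic and Galois global functional equations to get the finite product identity over the bad places, twist by an auxiliary Hecke character trivial at $v_0$ and highly ramified at the other bad places, and kill those factors using the matching of central characters via $\rho_Z$ (so that $\det R\circ\phi_v=\det R\circ\rho_{\Pi,v}$) together with stability of Galois-theoretic gamma factors. The only cosmetic difference is that at the auxiliary bad places you replace $\Pi_v$ by a generic principal-series constituent with the same central character via Theorem \ref{T:stability}, whereas the paper directly invokes the identity (\ref{E:gamma}) for the generic representation $\Pi_v$; both are valid.
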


\begin{proof}
Fix a place $v_0$ of $k$. As in the proof of Theorem \ref{T:stability}, 
one uses the global functional equations to obtain the identity (\ref{E:comparison}), with $v_0$ the place of interest. 
In the notation of (\ref{E:comparison}), for each $v \in S$, equation (\ref{E:gamma}) gives:  
\[  \gamma(s, \Pi_v, \mathcal{X}_v, R, \Psi_v)= \gamma(s, R \circ (\phi_v \otimes \mathcal{X}_v), \Psi_v), \]
for some $\phi_v :  WD_{k_v} \longrightarrow {^L} H(\C)$ such that  $\rho_Z \circ \phi_v$ corresponds to the central character of $\Pi_v$ (under the local Langlands correspondence for the torus $Z$). Since $\rho_Z \circ \rho_{\Pi, v}$  also corresponds to the central character of $\Pi_v$, we deduce that
\[  \det R \circ \phi_v = \det R \circ \rho_{\Pi,v}.  \]
 In particular, by multiplying $\mathcal{X}$  by a character $\mathcal{X}'$ which is trivial at the place of interest $v_0$, highly ramified at all $v \in S$ and unramified outside 
 $S \cup \{ v_0 \}$, we can appeal to the stability of Galois theoretic gamma factors to deduce that
 \[   \gamma(s, R \circ (\phi_v \otimes \mathcal{X}_v\mathcal{X}'_v), \Psi_v) = 
   \gamma(s, R \circ (\rho_{\Pi, v} \otimes \mathcal{X}_v \mathcal{X}'_v), \Psi_v) \]
  for all $v \in S$. Now the desired result follows from the equation (\ref{E:comparison}) with $\mathcal{X}$ replaced by $\mathcal{X} \cdot \mathcal{X}'$.
 \end{proof}

\vskip 15pt
\section{\bf  Plancherel Measures}  \label{S:plan}
 We continue with the set up of the previous section, but now we assume that $\pi$ is any irreducible representation of $H_F$, not necessarily generic. 
In this case, one can consider the Plancherel  measure  associated to the induced representation ${\rm Ind}_{P_F}^{G_F}  \pi \otimes \chi$. More precisely, one has a standard intertwining operator \cite[\S IV]{W}
\[  M(\pi \otimes \chi, P_F, \bar{P}_F,\psi) : {\rm Ind}_{P_F}^{G_F}  \pi \otimes \chi \longrightarrow {\rm Ind}_{\bar{P}_F}^{G_F}  \pi \otimes \chi \]
 defined by the usual integral when $|\chi| = |-|_F^s$  with ${\rm Re}(s)$  sufficiently large and admits a meromorphic continuation to all $\chi$.     The composite:
 \[  M(\pi \otimes \chi, \bar{P}_F, P_F,\psi) \circ M(\pi \otimes \chi, P_F, \bar{P}_F,\psi)  = \mu( \pi\otimes \chi,\psi)^{-1} \] 
 is a scalar-valued meromorphic function (in $\chi$) known as the Plancherel measure. Indeed, the function $\mu( \pi \otimes \chi |-|_F^s, \psi)$
 is a rational function of $q_F^{-s}$. 
 \vskip 5pt
 
Since the definition of the intertwining operators depends on the choice of  Haar measures on $N_F$ and $\bar{N}_F$, $\mu(s, \pi \otimes \chi)$ is a priori only well-defined up to scaling by a positive real number. For a precise normalization, see \cite[Appendix B]{GI}.  In particular, there is a unique  normalisation such that when  $\pi$  is a constituent of a principal series representation induced from a Borel subgroup, one has
\[  \mu(\pi, \psi )  = \prod_i \gamma ( r_i \circ \phi_{\pi}, \psi) \cdot \gamma (r_i^{\vee} \circ \phi_{\pi}, \overline{\psi}), \]
with 
\[  \phi_{\pi} : W_F \longrightarrow  {^L}T  \longrightarrow {^L}G \]
 where the first map is the one attached to the inducing data for $\pi$. 

% As in the discussion for  LS $\gamma$-factors preceding \eqref{normalization}, we have the meromorphic function $\mu(s,\pi,\psi)$, following Shahidi's normalisation. 
 %Indeed, $\mu(s, \pi, \psi)$ is a rational function in $q_F^{-s}$.  
  \vskip 5pt
 
 Suppose now that $\pi$ is supercuspidal and suppose we have globalised the data $(F, P_F = H_F \cdot N_F \subset G_F,  \pi)$ to $(k, P= H \cdot N \subset G, \Pi)$ as in the previous section, with $H \subset G$ quasi-split and $\Pi$ a cuspidal representation  such that  $\Pi_{v_0} = \pi$ and $\Pi_v$ is a constituent of a principal series representation induced from a Borel subgroup for all other $v$. Let $\rho$ be the Galois representation associated to $\Pi$ by V. Lafforgue [La]. 
 The goal of this subsection is to show:
 \vskip 5pt
 
\begin{thm}  \label{T:plan}
With the above  notations, one has
 \[
  \mu( \pi \otimes |-|_F^s  , \psi )  = \prod_i  \gamma(r_i \circ  (\rho_{v_0} \otimes |-|_F^s), \psi) \cdot \gamma( r_i^{\vee}\circ (\rho_{v_0} \otimes |-|_F^s), \overline{\psi}). \]  
\end{thm}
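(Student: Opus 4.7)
The plan is to imitate the global-to-local argument of Theorem~\ref{T:stability}, replacing Langlands-Shahidi $\gamma$-factors by Plancherel measures. Apply Corollary~\ref{T:main2} (adapted as in the opening of the proof of Theorem~\ref{T:stability}) to globalize the local data $(F,\,H_F\cdot N_F\subset G_F,\,\pi)$ to a global datum $(k,\,H\cdot N\subset G,\,\Pi)$ in which $k_{v_0}\cong F$, $H\subset G$ are quasi-split $k$-groups globalizing the local pair, $\Pi$ is a cuspidal automorphic representation of $H(\A)$ with $\Pi_{v_0}\cong\pi$, and for every $v\neq v_0$, $\Pi_v$ is a constituent of a principal series induced from a Borel subgroup, with inducing parameter $\phi_v\colon W_{k_v}\to{}^LT\hookrightarrow{}^LH$. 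Let $\rho=\rho_\Pi:{\rm Gal}(k^{\rm sep}/k)\to{}^LH(\overline{\Q}_l)$ be Lafforgue's global Galois parameter.

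The proof rests on two global product formulae. On the automorphic side, composing the adelic standard intertwining operator $M(\Pi\otimes\mathcal{X},P,\bar P)$ with its reverse produces the scalar $\prod_v\mu(\Pi_v\otimes\mathcal{X}_v,\Psi_v)^{-1}$, while a rational expression for the same composition in terms of Artin-type $L$-functions reduces to the identity via the Grothendieck-Laumon global functional equation. Hence
\[
\prod_v\mu(\Pi_v\otimes\mathcal{X}_v,\Psi_v)=1
\]
for any Hecke character $\mathcal{X}$ and nontrivial $\Psi=\prod_v\Psi_v$ of $k\backslash\A$. On the Galois side, Laumon's product formula for $\epsilon$-factors combined with the functional equation of $L(s,r_i\circ(\rho\otimes\mathcal{X}))$ gives, for each $i$,
\[
\prod_v\gamma\bigl(r_i\circ(\rho_{\Pi,v}\otimes\mathcal{X}_v),\Psi_v\bigr)=1=\prod_v\gamma\bigl(r_i^\vee\circ(\rho_{\Pi,v}\otimes\mathcal{X}_v),\overline{\Psi}_v\bigr).
\]

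For each $v\neq v_0$, multiplicativity of Plancherel measure under parabolic induction yields
\[
\mu(\Pi_v\otimes\mathcal{X}_v,\Psi_v)=\prod_i\gamma\bigl(r_i\circ(\phi_v\otimes\mathcal{X}_v),\Psi_v\bigr)\cdot\gamma\bigl(r_i^\vee\circ(\phi_v\otimes\mathcal{X}_v),\overline{\Psi}_v\bigr).
\]
By the central-character compatibility in \S\ref{SS:LS}~(b), $\rho_Z\circ\phi_v=\rho_Z\circ\rho_{\Pi,v}$, and therefore $\det r_i\circ\phi_v=\det r_i\circ\rho_{\Pi,v}$ for each $i$. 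Arguing exactly as in Theorem~\ref{T:stability}, twisting $\mathcal{X}$ by an auxiliary Hecke character that is trivial at $v_0$ and sufficiently highly ramified at the finitely many remaining bad places, the stability of Galois-theoretic $\gamma$-factors lets us replace $\phi_v$ by $\rho_{\Pi,v}$ in the above display at every $v\neq v_0$. Multiplying over all $v$ and using the two global product formulae to cancel the contributions away from $v_0$ then yields exactly the identity claimed at $v_0$.

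The main obstacle is establishing the global product formula $\prod_v\mu_v=1$ over function fields for a general cuspidal $\Pi$ (not merely a generic one), since the customary proof via Whittaker models is unavailable outside the generic setting. This requires fixing compatible Haar measures on $N(k_v)$ and $\bar N(k_v)$ and expressing the adelic intertwining operator as a ratio of global $L$-functions whose functional equation is available through Lafforgue's theory together with Laumon's product formula. Once this is in hand, the Plancherel measure identity for principal series (essentially built into the normalization of \S\ref{S:plan}) propagates via multiplicativity, and the remainder of the argument follows the same template as in \S\ref{SS:stability}.
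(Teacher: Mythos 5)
Your proposal is correct and follows essentially the same route as the paper: globalize via Corollary~\ref{T:main2}, use the global functional equation $M(\Pi\otimes\mathcal{X},\bar P,P)\circ M(\Pi\otimes\mathcal{X},P,\bar P)=1$ together with the Galois-side functional equation, express $\mu(\Pi_v\otimes\mathcal{X}_v,\Psi_v)$ at the auxiliary places through the principal-series normalization, and cancel those places by stability of Galois-theoretic $\gamma$-factors after a highly ramified twist. The ``main obstacle'' you flag is not actually one: the product formula for intertwining operators holds for arbitrary cuspidal inducing data with Tamagawa measures by \cite[Theorem IV.1.10]{MW}, which is exactly what the paper cites, so no Whittaker-model argument is needed.
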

\vskip 5pt
\vskip 5pt

\begin{proof}
 Over the global field $k$,  one has the global analog of the discussion before the statement of the theorem. When the Haar measures on $N(\A)$ and $\bar{N}(\A)$ are taken to be the Tamagawa measures, then one has the global functional equation \cite[Theorem IV.1.10]{MW}
\[  
 M(\Pi \otimes \mathcal{X}, \bar{P}, P) \circ M(\Pi \otimes \mathcal{X}, P, \bar{P})   = 1, \]
 where $\mathcal{X}$ is a global Hecke character with $\mathcal{X}_{v_0}  =1$ and $\mathcal{X}_v$ is unramified outside a finite set $S$.
 \vskip 5pt
 
Comparing this with the global functional equation of the Galois theoretic gamma factors, one sees that
\begin{equation}  \label{E:plan-com1}
 \prod_{v \in S \cup \{v_0 \}} \mu(\Pi_v \otimes \mathcal{X} |-|_v^s , \Psi_v) = \prod_{v \in S \cup \{v_0 \}} \prod_i  \gamma(r_i \circ (\rho_v \otimes \mathcal{X}_v |-|_v^s), \Psi_v) \cdot  \gamma(r_i^{\vee}\circ (\rho_v \otimes \mathcal{X}_v |-|_v^s), \overline{\Psi}_v). \end{equation}
 It follows that
 \begin{equation} \label{E:plan-com2}
 \frac{\mu(\pi  \otimes |-|_F^s, \psi)}{\prod_i  \gamma(r_i \circ (\rho_{v_0} \otimes |-|_F^s, \psi) \cdot  \gamma(r_i^{\vee}\circ (\rho_{v_0} \otimes |-|_F^s), \overline{\psi})}   \end{equation}
 \[ =  \prod_{v \in S} \prod_i   \left(
 \frac{  \gamma(r_i \circ (\rho_v \otimes \mathcal{X}_v |-|_v^s), \Psi_v) \cdot  \gamma(r_i^{\vee}\circ (\rho_v \otimes \mathcal{X}_v|-|_v^s), \overline{\Psi}_v)}{ \gamma (r_i \circ( \phi_{\Pi_v} \otimes \mathcal{X}_v |-|_v^s), \Psi_v) \cdot \gamma (r_i^{\vee} \circ( \phi_{\Pi_v} \otimes \mathcal{X}_v |-|_v^s),\overline{ \Psi}_v)} \right). \]
 Now suppose that $\mathcal{X}_v$ is sufficiently highly ramified for $v \in S$ and $\mathcal{X}_v$ is unramified for all other $v$'s.   Since $\det r_i  \circ \rho_v = \det r_i \circ \phi_{\Pi_v}$ and $\mathcal{X}_v$ is sufficiently highly ramified for $v \in S$, we conclude that
 \[     \mu(\pi \otimes |-|_F^s, \psi)  =\prod_i  \gamma( r_i \circ (\rho_{v_0} \otimes |-|_F^s), \psi) \cdot  \gamma( r_i^{\vee}\circ (\rho_{v_0} \otimes |-|_F^s), \overline{\psi}). \] 
 This completes the proof of  the theorem. 
 \end{proof}
 \vskip 10pt
 
 \begin{cor} \label{C:plan-galois}
 Let $\pi$ be any irreducible representation of $H_F$ (where $F$ be a local field). Then there is a local Galois representation $\phi:   {\rm Gal}(F^{sep}/F) \longrightarrow {^L} H_F$
 such that the (connected) central character $\omega_{\pi}$ of $\pi$ corresponds to the character $\rho_Z \circ \phi$ under the local Langlands correspondence for tori
and  
\[  
  \mu(\pi \otimes |-|_F^s, \psi )  = \prod_i  \gamma( r_i \circ  (\phi \otimes |-|_F^s), \psi) \cdot \gamma( r_i^{\vee}\circ (\phi \otimes |-|_F^s), \overline{\psi}). \]  
   \end{cor}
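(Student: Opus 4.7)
The plan is to reduce the general case to the supercuspidal case which is exactly the content of Theorem \ref{T:plan}, using the multiplicativity of both Plancherel measures and local gamma factors.

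First I would dispose of the supercuspidal case. If $\pi$ is supercuspidal, Corollary \ref{T:main2} lets us globalize the data $(F, P_F = H_F \cdot N_F \subset G_F, \pi)$ to a global datum with a cuspidal representation $\Pi$ on $H(\A)$ satisfying $\Pi_{v_0} \cong \pi$, with all other local components being constituents of principal series induced from a Borel. V.~Lafforgue attaches a global Galois parameter $\rho_\Pi$ to $\Pi$, and we simply take $\phi := \rho_{\Pi,v_0}$. Theorem \ref{T:plan} then gives the Plancherel-measure identity directly. The central-character condition follows from property (b) of V.~Lafforgue's construction recalled in §5.1: $\rho_Z \circ \rho_\Pi$ corresponds globally to the central character $\omega_\Pi$ of $\Pi$, so at the place $v_0$ it corresponds to $\omega_\pi$.

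Next I would treat a general irreducible $\pi$ of $H_F$ via multiplicativity. Realize $\pi$ as a subquotient of $\mathrm{Ind}_Q^{H_F}\sigma$, where $Q = L \cdot U_Q$ is a parabolic $F$-subgroup of $H_F$ and $\sigma$ is an irreducible supercuspidal representation of $L(F)$. The product $P_F \cdot Q$ is a parabolic $F$-subgroup $P^* = L \cdot N^*$ of $G_F$ with $N^* = U_Q \cdot N_F$, and by transitivity $\mathrm{Ind}_{P_F}^{G_F}\pi$ is a subquotient of $\mathrm{Ind}_{P^*}^{G_F}\sigma$. Apply the supercuspidal case to $\sigma$, obtaining a Galois parameter $\phi_L : W_F \to {}^L L(\C)$ whose composition with $\rho_Z^L$ matches $\omega_\sigma$ and which computes the Plancherel measure for $\sigma$ with respect to any parabolic of any ambient group in which $L$ is a Levi. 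Define $\phi$ to be the composition of $\phi_L$ with the natural embedding ${}^L L \hookrightarrow {}^L H_F$.

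To verify the Plancherel formula I would invoke multiplicativity of Plancherel measures (the standard Shahidi/Silberger factorization, available in positive characteristic by the Langlands-Shahidi theory in \cite{L2}): the intertwining operator for $\mathrm{Ind}_{P_F}^{G_F}(\mathrm{Ind}_Q^{H_F}\sigma)\otimes|-|^s$ factors through operators associated to adjacent chambers in the Weyl-orbit of the inducing parabolic, yielding an identity expressing $\mu(\pi\otimes |-|_F^s, \psi)$ as the ratio of the Plancherel measures of $\sigma \otimes |-|^s$ induced from $P^*$ and from $Q$. On the Galois side, the adjoint action of ${}^L L$ on $\mathrm{Lie}({}^L N^*)$ breaks up according to the parabolic filtration as $\mathrm{Lie}({}^L U_Q) \oplus \mathrm{Lie}({}^L N_F)\big|_{{}^L L}$, and the pieces $r_i|_{{}^L L}$ decompose further into irreducible constituents of ${}^L L$. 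Matching the two factorizations, and canceling the contribution of $\mathrm{Lie}({}^L U_Q)$ against the $\mu$-factor for $Q$, leaves precisely the identity
\[
  \mu(\pi\otimes|-|_F^s,\psi) = \prod_i \gamma(r_i\circ(\phi\otimes|-|_F^s),\psi) \cdot \gamma(r_i^\vee\circ(\phi\otimes|-|_F^s),\overline{\psi}).
\]
The central-character compatibility is automatic, since $\omega_\pi = \omega_\sigma|_{Z(H_F)^0}$ and $\rho_Z^{H_F}\circ\phi$ factors through $\rho_Z^L\circ\phi_L$ under the embedding ${}^L L \hookrightarrow {}^L H_F$.

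The main obstacle I anticipate is the bookkeeping of the multiplicativity step: one needs to verify carefully that the Shahidi-style factorization of Plancherel measures matches the Levi-decomposition of the adjoint representation on $\mathrm{Lie}({}^L N^*)$, including the correct cancellation of the ``internal'' $Q$-contribution. This is a purely formal but combinatorially delicate matter, relying on the decomposition of Weyl-group elements into simple reflections crossing walls of roots in $\mathrm{Lie}({}^L N^*)$. Once the correspondence between parabolic filtrations and root-space decompositions is tracked on both sides, the identity reduces to an instance of the supercuspidal case, which is Theorem \ref{T:plan}.
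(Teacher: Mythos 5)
Your proposal is correct and follows the same route as the paper: the paper's proof is a one-line reduction to the supercuspidal case (Theorem \ref{T:plan}) via multiplicativity of Plancherel measures, citing \cite[Appendix B]{GI} for that multiplicativity. You have simply spelled out the mechanism of the multiplicativity step (the factorization through the parabolic $P^* = P_F\cdot Q$ and the matching decomposition of the adjoint action on the dual side), which the paper delegates to the reference.
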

 
 \begin{proof}
 This follows from the supercuspidal case (as demonstrated in Theorem  \ref{T:plan}) by multiplicativity of Plancherel measures (see \cite[Appendix B]{GI}).
 \end{proof}
 
 \begin{cor}\label{plancherelstability}
 Let $\pi_1$ and $\pi_2$ be two irreducible  representations of $H_F$ with the same central character. For all sufficiently highly ramified characters $\chi$ of $F^{\times}$, 
\[   \mu( \pi_1 \otimes \chi, \psi)  =  \mu(\pi_2 \otimes \chi, \psi). \]
\end{cor}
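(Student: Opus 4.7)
The plan is to deduce this from Corollary \ref{C:plan-galois}, which already expresses the Plancherel measure entirely in terms of Galois-theoretic gamma factors. Once that expression is in hand, the corollary becomes a direct consequence of the stability of Galois-theoretic gamma factors, in exactly the same way that Theorem \ref{T:stability} was derived from equation (\ref{E:gamma}).

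More precisely, I would proceed as follows. Apply Corollary \ref{C:plan-galois} to each of $\pi_1$ and $\pi_2$ to obtain local Galois parameters
\[ \phi_j :  {\rm Gal}(F^{sep}/F) \longrightarrow {^L}H_F, \qquad j=1,2, \]
so that for every character $\chi$ of $F^{\times}$,
\[ \mu(\pi_j \otimes \chi, \psi) = \prod_i \gamma(r_i \circ (\phi_j \otimes \chi), \psi) \cdot \gamma(r_i^{\vee} \circ (\phi_j \otimes \chi), \overline{\psi}), \]
and such that $\rho_Z \circ \phi_j$ corresponds, under local class field theory, to the central character of $\pi_j$.

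Since $\pi_1$ and $\pi_2$ are assumed to share the same central character, we have $\rho_Z \circ \phi_1 = \rho_Z \circ \phi_2$. As was already observed in the proof of Theorem \ref{T:stability}, the determinant character $\det \circ\, r_i : {^L}H_F \to \C^{\times}$ is trivial on ${^L}H_{F,der}^0$ and so factors through $\rho_Z$; the same applies to $r_i^{\vee}$. Consequently,
\[ \det r_i \circ \phi_1 = \det r_i \circ \phi_2 \quad \text{and} \quad \det r_i^{\vee} \circ \phi_1 = \det r_i^{\vee} \circ \phi_2 \quad \text{for all } i. \]

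With this matching of determinants in hand, I invoke the stability of Galois-theoretic gamma factors (the same result used at the end of the proof of Theorem \ref{T:stability}): for each $i$, once $\chi$ is sufficiently highly ramified (relative to $\phi_1,\phi_2$ and $r_i$),
\[ \gamma(r_i \circ (\phi_1 \otimes \chi), \psi) = \gamma(r_i \circ (\phi_2 \otimes \chi), \psi), \]
and likewise for $r_i^{\vee}$ with $\overline{\psi}$. There are finitely many $r_i$'s, so one may pick a single conductor threshold beyond which all these identities hold simultaneously. Taking the product over $i$ of the resulting equalities gives $\mu(\pi_1 \otimes \chi, \psi) = \mu(\pi_2 \otimes \chi, \psi)$, as desired.

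The only real work has already been done upstream (the globalization leading to Theorem \ref{T:plan} and hence Corollary \ref{C:plan-galois}, together with stability of abelianized Galois gamma factors); no new obstacle arises here. If anything, the one subtlety to be careful about is the scaling ambiguity in the Plancherel measure; however, the normalization in Theorem \ref{T:plan} is precisely the one matching the Galois gamma factor product, so both sides of the desired equality are computed with the same normalization and the factors of proportionality cancel.
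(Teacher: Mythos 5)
Your proposal is correct and is essentially the paper's own proof: the paper likewise deduces the corollary in one line from Corollary \ref{C:plan-galois} together with the stability of Galois-theoretic gamma factors. You simply make explicit the intermediate step (that the equality of central characters forces $\det r_i \circ \phi_1 = \det r_i \circ \phi_2$, which is the hypothesis needed for Galois-side stability), which the paper leaves implicit.
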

\vskip 5pt

\begin{proof}
Since we have expressed the Plancherel measures in terms of Galois theoretic gamma factors in Corollary \ref{C:plan-galois} , the stability under twisting by highly ramified characters follows by that of the Galois theoretic ones.
\end{proof}
\vskip 5pt

\begin{cor}  \label{C:plan}
Let $\Pi$ be any cuspidal automorphic representation of $H(\A)$ and let $\rho$ be the Galois representation associated to $\Pi$ by V. Lafforgue.  For any $v$, one has
\[  \mu(\Pi_v \otimes |-|_F^s, \Psi_v)   =     \prod_i  \gamma( r_i \circ  (\rho_v \otimes |-|_v^s), \Psi_v) \cdot \gamma( r_i^{\vee}\circ (\rho_v \otimes |-|_v^s),\overline{\Psi}_v). \]  
\end{cor}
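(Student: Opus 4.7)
The plan is to repeat the global-to-local argument used to prove Theorem \ref{T:plan}, but now apply Corollary \ref{C:plan-galois} at every place in order to handle an arbitrary cuspidal $\Pi$ at an arbitrary place $v$. Fix the place $v$ of interest. For each place $w$ of $k$, Corollary \ref{C:plan-galois} supplies a local L-parameter $\phi_w$ satisfying
\[  \mu(\Pi_w \otimes |-|_w^s, \Psi_w) = \prod_i \gamma(r_i \circ (\phi_w \otimes |-|_w^s), \Psi_w) \cdot \gamma(r_i^\vee \circ (\phi_w \otimes |-|_w^s), \overline{\Psi}_w), \]
with $\rho_Z \circ \phi_w$ matching the central character $\omega_{\Pi_w}$. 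Property (b) of Lafforgue gives $\rho_Z \circ \rho_{\Pi, w} = \omega_{\Pi_w}$ as well, so for every $i$ and every $w$,
\[  \det(r_i \circ \phi_w) = \det(r_i \circ \rho_{\Pi, w}), \]
which is precisely the input needed to invoke the stability of Galois-theoretic gamma factors.

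Next, I will combine two global functional equations. On the automorphic side, the identity $M(\Pi \otimes \mathcal{X}, \bar{P}, P) \circ M(\Pi \otimes \mathcal{X}, P, \bar{P}) = 1$ (with Tamagawa measures) factorizes into a local product formula $\prod_w \mu(\Pi_w \otimes \mathcal{X}_w |-|_w^s, \Psi_w) = 1$. On the Galois side, the global functional equations for $L(s, r_i \circ (\rho_\Pi \otimes \mathcal{X}))$, coupled with Laumon's Euler product for $\epsilon$-factors \cite{Lau1}, applied both to $\Psi$ against $r_i$ and to $\overline{\Psi}$ against $r_i^\vee$, provide an analogous product identity for the Galois gamma factors. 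Choose a finite set $S$ of places containing $v$ outside of which $\Pi_w$, $\rho_{\Pi, w}$ and $\Psi_w$ are unramified. Property (a) of Lafforgue then matches the automorphic and Galois local factors at each $w \notin S$ whenever $\mathcal{X}_w$ is unramified, so dividing the two global identities leaves an equality of finite products over $S$.

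To isolate the place $v$, I will take a Hecke character $\mathcal{X}$ with $\mathcal{X}_v = 1$, highly ramified at each $w \in S \setminus \{v\}$, and unramified outside $S$. At each such $w$, the determinant matching above together with the stability of Galois-theoretic gamma factors under highly ramified twists lets us replace $\phi_w$ by $\rho_{\Pi, w}$ in the Galois product; combined with Corollary \ref{C:plan-galois}, this forces the local contribution at $w$ to be $1$. What survives is exactly the asserted identity at $v$.

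The main obstacle I anticipate is arranging a single $\mathcal{X}_w$ at each $w \in S \setminus \{v\}$ that simultaneously stabilizes all of the factors $\gamma(r_i \circ \cdot)$ and $\gamma(r_i^\vee \circ \cdot)$ for every $i$ in the (finite) list, and keeping careful enough track of the normalization of Plancherel measures (following \cite[Appendix B]{GI}) that the automorphic product identity is literal rather than holding only up to a positive scalar. Both issues are dealt with exactly as in the proof of Theorem \ref{T:plan}.
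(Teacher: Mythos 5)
Your proposal is correct and follows essentially the same route as the paper: the authors likewise combine the global functional equation for intertwining operators with the Galois-side functional equation, invoke Corollary \ref{C:plan-galois} to supply an auxiliary parameter $\phi_w$ at each bad place, and use the central-character matching together with stability of Galois-theoretic gamma factors under highly ramified twists to cancel the contributions away from the place of interest. The normalization and simultaneous-stability points you flag are handled exactly as in Theorem \ref{T:plan}, as you anticipate.
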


\vskip 5pt

\begin{proof}
We start with a comment on the difference between this corollary and Theorem \ref{T:plan}. In Theorem \ref{T:plan}, we started with a local supercuspidal representation and globalize it to $\Pi$ according to Theorem \ref{T:main}, so that we have control at all places outside $v_0$, which allows us to deduce the identity in the corollary  at $v_0$ (the identity being known at all other places). What the corollary asserts is that for any cuspidal $\Pi$, the same conclusion continues to hold at all places for $\Pi$. 
\vskip 5pt

The proof is via the same  argument as in the proof of Corollary \ref{C:gamma-stab}. To be more precise, fix a place $v_0$ of $k$ which is the place of interest.
The global functional equation for intertwining operators gives the equation (\ref{E:plan-com1}) in the current context. Now using Corollary \ref{C:plan-galois}, one obtains 
the analog of equation (\ref{E:plan-com2}) for some local Galois representation $\phi_v$ (in place of $\phi_{\Pi_v}$ in (\ref{E:plan-com2})). Now  one obtains the desired conclusion by appealing to the stability of Galois theoretic gamma factors as in the proof of Corollary \ref{C:gamma-stab} to isolate the place $v_0$.
\end{proof}

\vskip 10pt
\section{\bf Local Langlands Correspondence}  \label{S:LLC}
In this section, we specialize the discussion of the previous section to the case when  $H_F$ is a quasi-split classical group.
\vskip 5pt

\subsection{\bf Classical groups.}
Thus,  let $E$ be equal to $F$ or a quadratic field extension, with ${\rm Aut}(E/F)  = \langle c \rangle$ and let $V$ be a finite-dimensional vector space over $E$ equipped with a nondegenerate sesquilinear form $\langle-, -\rangle$. Then $H_F  = {\rm Aut}(V, \langle-,-\rangle)^0$, and the various possibilities are given by:
\[  H_F = \SO_{2n+1} \quad \text{or} \quad \Sp_{2n} \quad \text{or} \quad  \SO_{2n}  \quad \text{or} \quad  \U_n.  \]
  \vskip 5pt
 
\noindent The  Langlands dual group of $H_F$ is
 \[  H_F^{\vee} = \Sp_{2n}(\C) \quad \text{or} \quad \SO_{2n+1}(\C) \quad \text{or}  \quad \SO_{2n}(\C) \quad  \text{or} \quad \GL_n(\C). \]
 Following \cite{GGP}, an  L-parameter $\phi: W\!D_F \longrightarrow {^L}H_F$  for $H_F$ gives rise to an  equivalence class of self-dual or conjugate-self-dual representations
\[  \phi:  W\!D_E  \longrightarrow \begin{cases}
\Sp_{2n}(\C) \\
\SO_{2n+1}(\C) \\
\O_{2n}(\C) \\
\GL_n(\C) \end{cases} \]
of appropriate sign $\epsilon = \pm 1$ in the respective cases. Note that if $H_F = \SO_{2n}$, we are considering the L-parameters up to equivalence under conjugacy by $\O_{2n}(\C)$ and not just  by $\SO_{2n}(\C)$. To be precise, a representation $\phi$ of $W\!D_E$ is conjugate-self-dual if $\phi^c \cong \phi^{\vee}$, and the L-parameter of $H_F$ gives rise to   a conjugate-self-dual  representation of $W\!D_E$ of 
\[ 
\text{dimension}= \begin{cases}
2n \\
2n+1 \\
2n  \\
n  \end{cases}
\quad \text{and sign} = \epsilon(H_F):= 
 \begin{cases}
-1, \text{  if $H_F = SO_{2n+1}$;} \\
+1, \text{  if $H_F = Sp_{2n}$;} \\
+1 \text{  if $H_F = SO_{2n}$;} \\
(-1)^{n-1}, \text{  if $H_F =  \U_n$.}
\end{cases}  \]
We shall frequently identify an L-parameter $\phi$ of $H_F$ with its associated conjugate-self-dual representation of $W\!D_E$. 
 Likewise, for a representation $\tau$ of $\GL_r(E)$, we will write $\tau^c$ for the associated $c$-conjugate representation.

\vskip 5pt

 \subsection{\bf Langlands-Shahidi factors.}
The group $H_F \times \GL_r(E)$ is the Levi factor of a maximal parabolic subgroup $P_{r,F}$ of a classical group $G_F$ of the same type. 
The generator $\delta \in \Hom_F((H_F \times \GL_r(E))/ Z(G_F)^0, \G_m)$ is simply the rational character $N_{E/F} \circ \det_{\GL_r(E)}$.
We have the associated Plancherel measure $\mu(\sigma,\psi)$ if $\sigma = \pi \otimes \tau$ is a representation of $H_F \times \GL_r(E)$ and the associated LS gamma factors $\gamma(\sigma, r_i,\psi)$ if $\sigma$ is generic. 
Furthermore, one can explicate each $r_i$ in this case.  The dual group of the Levi factor is $ H_F^{\vee} \times \GL_r(\C)$ and we have
\[  r_1 = {\rm std}_{H_F^{\vee}}^{\vee} \otimes  {\rm std}_{\GL_n(\C)} 
 \] 
 where ${\rm std}$ stands for the standard representation of the relevant group. It is convenient and customary to write
 \begin{equation}  \label{E:redefine}
   \gamma(s,\pi \times \tau,\psi) \mathrel{\mathop:}= \gamma( \pi^{\vee} \otimes \tau |\det|_E^s  ,r_1,\psi).
\end{equation}
 The representation $r_2$ is given by 
\[   R  =
\begin{cases}
 {\rm Sym}^2,   \text{  if $H_F = \SO_{2n+1}$;} \\
 \wedge^2,  \text{  if $H_F = \Sp_{2n}$ or $\SO_{2n}$;} \\
 {\rm Asai}^{(-1)^n},  \text{  if $H_F = \U_n$.} \end{cases}  \]
In fact, this second $\gamma$-factor depends only on $\tau$ and we write
\begin{equation*}
   \gamma(s,\tau,R,\psi) \mathrel{\mathop:}= \gamma(\sigma \otimes  |\det|_E^{\frac{1}{2} s},r_2,\psi).
\end{equation*}
Hence, for $i=1$ or $2$, we have:
\[  \gamma(is , \pi \otimes \tau, r_i,\psi)  = \gamma( \pi \otimes \tau|\det|_E^s, r_i, \psi). \]

\vskip 5pt

\subsection{\bf The problem.}
The problem we shall consider in this section is the following. 
Starting with a supercuspidal representation $\pi$ of $H_F$, we may globalize  it to $\Pi$ using Corollary \ref{T:main2} and then use V. Lafforgue's work  \cite{La} to obtain a global Galois representation $\rho$ valued in ${^L}H_F(\overline{\Q}_l) \cong {^L}H_F(\C)$. In particular, one obtains a local L-parameter $\rho_F$ at the place $v_0$ where $k_{v_0}  = F$. It is a natural question to ask if $\rho_F$ depends on the choice of the globalization. It would, in general, since a supercuspidal representation can belong to two different Arthur packets and the resulting global Galois representations will be quite different; for example, one could be pure while the other is not. The first goal of this section is to show that, despite this, one can attach a discrete series L-parameter to a supercuspidal representation of a classical group using the work of V. Lafforgue and many others. 
 
 \vskip 5pt
 
\subsection{\bf Generic case.}
 We first consider the case of generic supercuspidal representations with respect to a fixed Whittaker datum. In \cite{L1,L2}, the second author has shown the Langlands functorial lifting from classical groups to $\GL_N$ using the converse theorem of Cogdell-Piatetski-Shapiro and the Langlands-Shahidi method (following \cite{CKPSS1, CKPSS2} in the characteristic $0$ case). As a consequence of this and the local Langlands correspondence for $\GL_N$ \cite{LRS}, one has a  map
 \begin{equation} \label{E:lift}
  \{ \text{generic supercuspidal representations of $H_F$} \}  \longrightarrow \{ \text{elliptic $L$-parameters $W_F \rightarrow  {^L}H_F$ }\} \end{equation}
  which satisfies the following property: for any irreducible generic representation $\tau$ of $\GL_r(E)$ (for any $r$) with associated L-parameter $\phi_{\tau}$, 
  \[  \gamma(s, \pi \times \tau, \psi)  =  \gamma(s, \phi_{\pi} \otimes  \phi_{\tau}, \psi), \]
  where we are using the LS gamma factor defined in (\ref{E:redefine}) on the LHS. 
 Moreover, one also knows by \cite{HL1, HL2} that
 \[  \gamma(s,  \tau, R, \psi) =  \gamma(s, \pi \otimes \tau, r_2, \psi)  = \gamma(s,  R \circ \phi_{\tau}, \psi) \]
  for any generic representation $\tau$ of $\GL_r(E)$.
  \vskip 5pt
  
   In the characteristic $0$ case, using the theory of local descent of Ginzburg-Rallis-Soudry, one can show that the map (\ref{E:lift}) is a surjection. The theory of local descent should continue to work over a local function field $F$. However,  it is presently not written up in this generality in the literature. While we could have taken the surjectivity of (\ref{E:lift}) as a working hypothesis, we prefer to use a weaker one to be described below.
  \vskip 5pt
  
 \subsection{\bf A working hypothesis.}  \label{SS:hypo}
 Suppose that  $H'_F$ is the split  $\SO_{2n+1}$ or the quasi-split unramified $\SO_{2n}$. As described in \cite{S1},  one can construct a tamely ramified regular elliptic L-parameter
\[    \phi_1:   W_F \longrightarrow  \Sp_{2n}(\C)  \quad \text{or} \quad \O_{2n}(\C)  \]
 which is irreducible as a representation of $W_F$.   
If  $H'_F = \Sp_{2n}$ so that ${H'_F}^{\vee} = \SO_{2n+1}(\C)$, one still has an ``almost irreducible" elliptic tamely ramified L-parameter 
\[  \phi_1 :  W_F \longrightarrow  \SO_{2n+1}(\C) \]
of the form
\[  \phi_1 = \chi_1 +  \phi_1' \]
 with $\chi_1$ 1-dimensional and $\phi_1':  W_F \longrightarrow \O_{2n}(\C)$  irreducible. 
We would like to know that such a $\phi_1$ is in the image of the map (\ref{E:lift}).
\vskip 5pt

 In characteristic $0$, this was shown by Savin \cite{S1} for the symplectic and orthogonal groups  by using the Debacker-Reeder construction of depth $0$ supercuspidal L-packets \cite{DR}. Unfortunately, both \cite{DR} and \cite{S1} are written in the context of characteristic $0$ local fields. However,  it seems likely that the parts which are relevant for us carry over to the characteristic $p >0$ setting (at least if $p \ne 2$). In what follows, we shall take this as a working hypothesis:
 \vskip 5pt
 
 \noindent{\bf Working Hypothesis I}: Assume that $H'_F$ is the split  $\SO_{2n+1}$ or the quasi-split unramified $\SO_{2n}$. The tamely ramified parameter $\phi_1$ described here lies in the image of (\ref{E:lift}).

\vskip 5pt
 Let $\pi_1$ be the generic supercuspidal representation of $H'_F$ which is mapped to $\phi_1$ under (\ref{E:lift}) in the symplectic and orthogonal case.
In the unitary case, we shall let $H'_F  = \GL_n$ and $\phi_1$ be any irreducible representation of $W_F$. In particular, we don't need this working hypothesis for unitary groups.

\vskip 10pt

\subsection{\bf Globalization.}
 Now, appealing to Corollary \ref{T:main2}, given any supercuspidal representation $\pi_F$ of $H_F$, there exist:
\vskip 5pt

\begin{itemize}
\item   a function field $k$ with $k_{v_0} \cong F$ at a place $v_0$;
\vskip 5pt

\item a quasi-split group $H_k$ over $k$ such that $H_{k, v_0} \cong H_F$ and $H_{k, v_1}$ is an unramified group as described in \S \ref{SS:hypo};
\vskip 5pt

\item a cuspidal representation $\Pi$ of $H_k(\A)$ such that $\Pi_{v_0} = \pi_F$,  $\Pi_{v_1} = \pi_1$ (with $\pi_1$ defined in \S \ref{SS:hypo}) and $\Pi_v$ is a constituent of a principal series representation induced from  a Borel subgroup for all other $v$. 
\end{itemize}

\vskip 5pt

Let 
\[  \rho_{\Pi} :  {\rm Gal}(k^{sep}/k) \longrightarrow   {^L}H_k(\overline{\Q}_l)  \]
be the semisimple Galois representation associated to $\Pi$ by V. Lafforgue.  The following proposition describes some key properties of $\rho_{\Pi}$.
\vskip 5pt

\begin{prop}  \label{P:deligne}
(i) The local representation $ \rho_{\Pi, v_1} $ corresponds to the Weil-Deligne representation $\phi_1$. In particular, 
the global representation $\rho_{\Pi}$ is either irreducible or the sum of a quadratic character and an irreducible self-dual representation.
\vskip 5pt

(ii)  The global representation $\rho_{\Pi}$ is $\iota_l$-pure of weight $0$ (in the sense of \cite[\S 1.2.6]{D}).

\vskip 5pt

(iii) The local representation $ \rho_{\Pi, v_0}$ corresponds to a discrete series L-parameter for $H_F$.
\end{prop}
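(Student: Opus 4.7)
The plan is to prove (i), (ii) and (iii) in order.

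For (i), I identify $\rho_{\Pi,v_1}$ with $\phi_1$ by gamma-factor comparison. Applying Corollary \ref{C:gamma-stab} at $v_1$, twisted by an arbitrary cuspidal generic representation $\tau$ of $\GL_r(E_{v_1})$, yields
\[
\gamma(s, \Pi_{v_1} \times \tau, \Psi_{v_1}) = \gamma(s, \rho_{\Pi,v_1} \otimes \phi_\tau, \Psi_{v_1}),
\]
while the Langlands--Shahidi theory combined with the functorial lift (\ref{E:lift}) gives $\gamma(s, \Pi_{v_1} \times \tau, \Psi_{v_1}) = \gamma(s, \phi_1 \otimes \phi_\tau, \Psi_{v_1})$. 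As $\tau$ varies, Henniart's characterization of local Weil--Deligne representations of $\GL_N$ by twisted gamma factors---equivalent to the local Langlands correspondence for $\GL_N$ in positive characteristic \cite{LRS}---forces $\rho_{\Pi,v_1} \cong \phi_1$. The global ``almost irreducibility'' of $\rho_\Pi$ is then immediate: any global decomposition of $\rho_\Pi$ into proper summands restricts to a nontrivial decomposition at $v_1$, contradicting the structure of $\phi_1$; any one-dimensional summand is self-dual because $\rho_\Pi$ takes values in a classical dual group, and is therefore quadratic.

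For (ii), I pass through $\GL_N$. The global functorial transfer of \cite{L2} attaches to $\Pi$ an isobaric automorphic representation $\Pi' = \tau_1 \boxplus \cdots \boxplus \tau_k$, with each $\tau_i$ a cuspidal representation of some $\GL_{n_i}$. L.~Lafforgue attaches to each $\tau_i$ a semisimple $l$-adic Galois representation $\rho_{\tau_i}$ which is $\iota_l$-pure of weight $0$. At almost all places $v$, the Satake parameters of ${\rm std}\circ\rho_{\Pi,v}$ and of $\bigoplus_i \rho_{\tau_i,v}$ both equal the functorial lift of the Satake parameter of $\Pi_v$---the former by V.~Lafforgue's property (a), the latter by L.~Lafforgue's construction combined with \cite{L2}. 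Chebotarev density then identifies ${\rm std}\circ\rho_\Pi$ with $\bigoplus_i \rho_{\tau_i}$ as semisimple Galois representations, and purity of each summand gives the desired purity of $\rho_\Pi$ in the sense of \cite[\S 1.2.6]{D}.

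For (iii), the goal is to show that ${\rm std}\circ\rho_{\Pi,v_0}$ is a multiplicity-free direct sum of irreducible self-dual (resp.\ conjugate self-dual) representations of $W_F$ with trivial monodromy and of the sign determined by $H_F$. The sign condition is automatic from $\rho_\Pi$ being valued in ${^L}H_k$. For the other two conditions---triviality of the monodromy and multiplicity-freeness on the Weil side---my plan is to combine the Galois-theoretic Plancherel formula of Theorem \ref{T:plan} with Silberger's theorem on the Plancherel measure of a supercuspidal representation. If $\rho_{\Pi,v_0}$ factored through a proper Levi ${^L}M_0 \subsetneq {^L}H_F$, then inspection of the Plancherel formula would show that one of the Galois gamma factors $\gamma(r_i\circ(\rho_{\Pi,v_0}\otimes|-|_F^s),\psi)$ acquires a pole or zero at $s=0$ of a shape incompatible, by Silberger's theorem, with $\pi_F$ being supercuspidal. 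The main obstacle is precisely this last implication: one must analyze each type of proper Levi subgroup of ${^L}H_F$ against the decomposition of each $r_i$ into irreducible constituents, and pinpoint the specific Galois gamma factor whose behavior at $s=0$ would force $\pi_F$ into the image of proper parabolic induction, contradicting supercuspidality.
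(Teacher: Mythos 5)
There are genuine gaps in all three parts, and they share a common source: the cuspidal representation $\Pi$ here globalizes an \emph{arbitrary} supercuspidal $\pi_F$ at $v_0$ and is therefore not globally generic, so neither Corollary \ref{C:gamma-stab} nor the functorial transfer to $\GL_N$ of \cite{L2} is available to you. For (i), the only identity one actually has is the one from Corollary \ref{C:plan}, which controls the \emph{Plancherel measure}, i.e. the product $\gamma(s,\rho_{v_1}^{\vee}\otimes\phi_{\tau},\psi)\cdot\gamma(-s,\rho_{v_1}\otimes\phi_{\tau}^{\vee},\overline{\psi})$, not the individual gamma factor; the paper explicitly notes that such a product identity does not in general determine $\rho_{v_1}$, and rescues the argument only by exploiting the (almost) irreducibility of $\phi_1$ — this is the entire reason the auxiliary place $v_1$ and Working Hypothesis I exist. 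Your converse-theorem step starts from an identity you are not entitled to. For (ii), the paper's proof is one line: by (i), $\rho_{\Pi}$ is irreducible (or a quadratic character plus an irreducible self-dual), and purity is then L.~Lafforgue's theorem \cite[Theorem VII.6]{LL} proving Deligne's conjecture. The ``in particular'' clause of (i) is precisely the input to (ii); your detour through an isobaric transfer both presupposes genericity and misses this logical dependence.

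For (iii) there are two further problems. First, a discrete series L-parameter need \emph{not} have trivial monodromy — it is a multiplicity-free sum of irreducible conjugate-self-dual Weil--Deligne summands $\sigma\otimes S_a$ of sign $\epsilon(H_F)$ (otherwise the sets ${\rm Jord}_{\rho}(\phi_{\pi})$ of Proposition \ref{P:noholes} would be vacuous) — so your stated goal is the wrong statement. Second, the paper's mechanism is: purity from (ii) plus Deligne's weight-monodromy theorem \cite[Theorem 1.8.4]{D} gives temperedness of $\rho_{v_0}$; then the nonvanishing of $\mu(s,\pi\times\tau,\psi)$ for non-conjugate-self-dual $\phi_{\tau}$ and the bound on the order of its zeros, from \cite[Prop.~IV.2.2(ii) and Cor.~IV.1.2(ii)]{W} for supercuspidal $\pi$, rule out non-self-dual summands and multiplicities $\geq 2$. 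You do not use the purity established in (ii) at all, and you explicitly leave unproved the implication you call ``the main obstacle''; that implication is exactly where the content of (iii) lies, so the proposal for (iii) is incomplete as it stands.
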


\begin{proof}
(i) It follows by Corollary  \ref{C:plan} that, with $\rho = \rho_{\Pi}$, 
\[  \mu(s,\pi_1 \times \tau, \psi) =  \gamma(s,\rho_{v_1}^{\vee} \otimes \phi_{\tau}, \psi) \cdot     \gamma(-s,\rho_{v_1} \otimes \phi_{\tau}^{\vee},\overline{\psi}) \cdot    \gamma (2s,R \circ \phi_{\tau},\psi) \cdot \gamma(-2s,R^{\vee} \circ \phi_{\tau},\overline{\psi}).  \]
On the other hand, by Langlands-Shahidi theory \cite{Sh2, L2} and the properties of the map (\ref{E:lift}), one has
\[  \mu(s,\pi_1 \times \tau, \psi) =    \gamma(s,\phi_1^{\vee} \otimes \phi_{\tau}, \psi) \cdot   \gamma(-s, \phi_1 \otimes \phi_{\tau}^{\vee},\overline{\psi}) \cdot \gamma (2s,R \circ \phi_{\tau},\psi) \cdot \gamma(-2s,R^{\vee} \circ \phi_{\tau},\overline{\psi}).  \]
   Thus, one has
\[   \gamma(s,\rho_{v_1}^{\vee} \otimes \phi_{\tau}, \psi)  \cdot \gamma(-s,\rho_{v_1} \otimes \phi_{\tau}^{\vee},\overline{\psi})  =  \gamma(s,\phi_1^{\vee} \otimes \phi_{\tau}, \psi) \cdot  \gamma(-s,\phi_1 \otimes \phi_{\tau}^{\vee},\overline{\psi}) . \]
\vskip 5pt

In general, such an identity is not sufficient to force $\rho_{v_1}  = \phi_1$. However, in our case, $\phi_1$ is almost an irreducible representation of $W_F$, and this additional property will give this identity.

\vskip 5pt

More precisely, 
if $\phi_1$ is irreducible as a representation of $W_F$, then for any $\phi_{\tau}$ of dimension $< \dim \phi_1$, the RHS of the above equation has no zeros or poles. On the other hand, if $\rho_{v_1}$ is not an irreducible representation of $W_F$, then the LHS of the above equation will have a zero for some $\phi_{\tau}$ of dimension $< \dim \phi_1$. Thus, we deduce that $\rho_{v_1}$ is irreducible as a representation of $W_F$ as well.  Then, taking $\phi_{\tau}  = \phi_1$ in the above equation, we deduce that the RHS is zero and thus so is the LHS, which implies that $\rho_{v_1}  =  \phi_1$. 
\vskip 5pt

If $\phi_1$ is not irreducible, then $\phi_1 = \chi + \phi_1'$ with $\phi_1'$ irreducible of dimension $2n$. In this case, a similar argument as above gives $\rho_{v_1}  = \phi_1$; we leave the details to the reader. 
\vskip 10pt

\noindent (ii) This follows from (i) and \cite[Theorem VII.6]{LL} (proving a conjecture of Deligne \cite[Conjecture 1.2.10]{D}).
\vskip 10pt

\noindent (iii) It follows from (ii) and a theorem of Deligne \cite[Theorem 1.8.4]{D} that $\rho_{v_0}$ is a tempered L-parameter. Moreover, by Corollary \ref{C:plan}, one has
\[  \mu(s,\pi \times \tau, \psi) =   \gamma(s,\rho_{v_0}^{\vee} \otimes \phi_{\tau}, \psi) \cdot  \gamma(-s,\rho_{v_0} \otimes \phi_{\tau}^{\vee},\overline{\psi}) \cdot  \gamma (2s,R \circ \phi_{\tau},\psi) \cdot \gamma(-2s,R^{\vee} \circ \phi_{\tau},\overline{\psi})  \]
for any $\phi_{\tau}$.  Now if $\phi_{\tau}$ is not conjugate-self-dual, then it follows by \cite[Prop. IV.2.2(ii)]{W} that  the LHS is nonzero, and hence so is the RHS. This implies that $\rho_{v_0}$ does not contain any non-conjugate-self-dual summand.  Further, it follows by \cite[Corollary IV.1.2(ii)]{W} that the LHS has a zero of order at most $2$, which implies that $\rho_{v_0}$ is multiplicity free. Thus, $\rho_{v_0}$ is the multiplicity-free sum of conjugate-self-dual representations of $W\!D_F$ (of sign $\epsilon(H_F)$), and thus is a discrete series parameter for $H_F$.  
\end{proof}

\vskip 5pt
\subsection{\bf Independence.}
The following proposition shows that the discrete series L-parameter obtained in (iii) above is independent of the various choices used in its construction. 
 \vskip 10pt
 
\begin{prop}  \label{P:indep}
Suppose that
\vskip 5pt
\begin{itemize}
\item  $k$ and $k'$ be two global function fields such that $k_{v_0} \cong k'_{v'_0} \cong F$;
\vskip 5pt

\item $H_k$ and $H_{k'}$ are algebraic groups over $k$ and $k'$ respectively such that $H_{k, v_0} \cong H_{k', v'_0} \cong H_F$;
  \vskip 5pt

\item $\Pi$ and $\Pi'$ are two cuspidal representations such that $\Pi_{v_0}  \cong \Pi'_{v'_0} \cong \pi_F$.

\vskip 5pt

\item the associated Galois representations  $\rho_{\Pi,l}$ and $\rho_{\Pi', l'}$ are both pure of weight $0$ (where $l$ and $l'$ are any two prime numbers different from $p$).
\end{itemize}
\vskip 5pt

\noindent Then the local representations $\rho_{\Pi, v_0, l}$ and $\rho_{\Pi', v'_0, l'}$ are equivalent as L-parameters of $H_F$.
\end{prop}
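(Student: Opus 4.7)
The strategy is to use the Plancherel measure as a purely local invariant of $\pi_F$. Set $\phi := \rho_{\Pi, v_0, l}$ and $\phi' := \rho_{\Pi', v'_0, l'}$. By Proposition \ref{P:deligne}(iii) applied to each globalization (using the weight-zero purity hypothesis), both $\phi$ and $\phi'$ are discrete series $L$-parameters for $H_F$: multiplicity-free direct sums of pairwise distinct irreducible tempered conjugate-self-dual Weil-Deligne representations of $W\!D_E$ of sign $\epsilon(H_F)$. I would first apply Corollary \ref{C:plan} to $\Pi$ at $v_0$ and to $\Pi'$ at $v'_0$, with respect to parabolic induction from the Levi $H_F \times \GL_r(E)$ of a classical group one rank higher. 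The factor $\gamma(2s, R \circ \phi_\tau, \psi)\gamma(-2s, R^\vee \circ \phi_\tau, \bar\psi)$ depends only on $\phi_\tau$ and cancels, so for every irreducible generic representation $\tau$ of $\GL_r(E)$ and every $r \geq 1$, equality of the two Plancherel expressions yields
\begin{equation} \label{E:indep-plan}
  \gamma(s, \phi^\vee \otimes \phi_\tau, \psi)\, \gamma(-s, \phi \otimes \phi_\tau^\vee, \bar\psi) \;=\; \gamma(s, \phi'^\vee \otimes \phi_\tau, \psi)\, \gamma(-s, \phi' \otimes \phi_\tau^\vee, \bar\psi).
\end{equation}

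Next I would extract the irreducible summands of $\phi$ and $\phi'$ from \eqref{E:indep-plan} by analyzing the order of vanishing at $s = 0$. Specialize $\phi_\tau = \sigma$ to be the $L$-parameter of an irreducible discrete series $\tau_\sigma$ of $\GL_m(E)$ (for any $m \leq \dim \phi$). By the classical Jacquet--Shalika result, $L(s, \sigma_0^\vee \otimes \sigma)$ for irreducible tempered $\sigma_0, \sigma$ has a simple pole at $s = 0$ iff $\sigma_0 \cong \sigma$, and is holomorphic there otherwise; temperedness of $\phi$ ensures that $L(1, \phi \otimes \sigma^\vee)$ is finite. Writing $\gamma(s, \phi^\vee \otimes \sigma, \psi) = \epsilon(s, \phi^\vee \otimes \sigma, \psi) \cdot L(1-s, \phi \otimes \sigma^\vee) / L(s, \phi^\vee \otimes \sigma)$, we see that it has a simple zero at $s = 0$ precisely when $\sigma$ is an irreducible summand of $\phi$. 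The same analysis applies to the other factor in \eqref{E:indep-plan} and to $\phi'$, so each side of \eqref{E:indep-plan} vanishes to order $2$ at $s = 0$ if $\sigma$ is an irreducible summand of $\phi$ (resp.\ $\phi'$), and is finite and nonzero there otherwise. Equation \eqref{E:indep-plan} therefore forces the constituent sets of $\phi$ and $\phi'$ to coincide; multiplicity-freeness then gives $\phi \cong \phi'$ as $L$-parameters for $H_F$.

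The main obstacle is the vanishing analysis in the second step: one must verify that no accidental cancellation occurs between the two gamma factors in the product. This is secured by temperedness of $\phi$, $\phi'$, and $\sigma$, which pins all relevant $L$-factor poles to the line $\Re(s) = 0$ and ensures the two factors contribute zeros on that same line rather than meeting poles of one another. A minor ancillary point is to confirm that every irreducible tempered conjugate-self-dual Weil-Deligne representation of $W\!D_E$ of sign $\epsilon(H_F)$ arises as $\phi_\tau$ for some generic (indeed, discrete series) $\tau$ of a suitable $\GL_m(E)$, which is guaranteed by the local Langlands correspondence for general linear groups over function fields \cite{LRS}.
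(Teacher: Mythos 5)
Your proposal is correct and follows the paper's proof essentially step for step: purity plus Proposition \ref{P:deligne} to see that both local parameters are discrete series parameters, Corollary \ref{C:plan} applied to both globalizations to equate the products $\gamma(s,\phi^{\vee}\otimes\phi_{\tau},\psi)\gamma(-s,\phi\otimes\phi_{\tau}^{\vee},\overline{\psi})$ for all $\phi_{\tau}$, and then the conclusion that this identity determines a multiplicity-free tempered parameter. The only difference is that the paper dispatches the last step by citing \cite[Lemma 12.3]{GS}, whereas you prove it directly via the order-of-vanishing analysis at $s=0$; your argument is essentially the content of that lemma and is sound.
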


\begin{proof}
By the previous proposition, we know that $\rho_{\Pi, v_0, l}$ and $\rho_{\Pi', v'_0, l'}$ are both discrete series L-parameters of $H_F$. Moreover, by Corollary \ref{C:plan}, we have
\[  \gamma(-s, \rho_{\Pi, v_0,l} \otimes \phi_{\tau}^{\vee},\overline{\psi}) \cdot \gamma(s, \rho_{\Pi, v_0,l}^{\vee} \otimes \phi_{\tau}, \psi) = \gamma(-s, \rho_{\Pi', v_0', l'} \otimes \phi_{\tau}^{\vee},\overline{\psi}) \cdot \gamma(s, \rho_{\Pi', v'_0, l'}^{\vee} \otimes \phi_{\tau}, \psi). \]
for any irreducible representation $\phi_{\tau}$ of $W_E$.   By \cite[Lemma 12.3]{GS}, this implies that $\rho_{\Pi, v_0, l}$ and $\rho_{\Pi', v'_0, l'}$ are equivalent as L-parameters of $H_F$.
\end{proof}
 \vskip 10pt

\subsection{\bf $L$-parameters of supercuspidal representations.}
To summarise,we have shown:
\vskip 5pt

\begin{thm}  \label{T:L}
We assume the Working Hypothesis. Let $\tau$ be a supercuspidal representation of $\GL_r(F)$ (for any $r$) with associated L-parameter $\phi_{\tau}$. 
\vskip 5pt

(i) For each prime number $l \ne p$, there is a map
\[ \mathcal{L}_l:  \{\text{supercuspidal representations of $H_F$} \}  \longrightarrow 
\{ \text{elliptic L-parameters $W\!D_F  \longrightarrow {^L}H_F$} \}. \]
Write $\phi_\pi = \mathcal{L}_l(\pi)$ for the corresponding Langlands parameter of a representation $\pi$.
\vskip 5pt

\noindent (ii) Suppose $\pi$ is a supercuspidal generic representation of $H_F$. The map $\mathcal{L}_l$  has the property that
\[   L(s,\pi \times \tau) =   L(s,\phi_\pi \otimes \phi_{\tau}) \quad \text{ and } \quad  \varepsilon(s,\pi \times \tau,\psi) = \varepsilon(s,\phi_\pi \otimes \phi_\tau,\psi).  \]

\vskip 5pt

\noindent (iii) The map $\mathcal{L}_l$  has the property that
\[  \mu(s, \pi \times \tau, \psi) =  \gamma(s,\phi_\pi^{\vee} \otimes \phi_{\tau}, \psi) \cdot   \gamma(-s,\phi_\pi \otimes \phi_{\tau}^{\vee},\overline{\psi}) \cdot  \gamma (2s, R \circ \phi_{\tau},\psi) \cdot \gamma(-2s, R^{\vee} \circ \phi_{\tau},\overline{\psi}).  \] 

\vskip 5pt

(iv) Moreover, $\mathcal{L}_l$ is characterized by properties $(ii)$ and $(iii)$ and is independent of $l$ (so that we simply denote it by $\mathcal{L}$).
\end{thm}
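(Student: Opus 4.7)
The plan is to bundle together the pieces already developed: globalization (Corollary \ref{T:main2}), V.~Lafforgue's construction, the analysis of $\rho_{\Pi,v_0}$ in Propositions \ref{P:deligne}--\ref{P:indep}, the comparison of LS and Galois $\gamma$-factors (Corollary \ref{C:gamma-stab}), and the Plancherel identity (Corollary \ref{C:plan}). The only genuinely new input needed is a uniqueness principle for L-parameters in terms of twisted $\gamma$-factors, for which we invoke \cite[Lemma~12.3]{GS}.

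For (i), given a supercuspidal $\pi$ of $H_F$, use Corollary \ref{T:main2} to produce $(k,H_k,\Pi)$ with $k_{v_0}\cong F$, $H_{k,v_0}\cong H_F$, $\Pi_{v_0}\cong\pi$, $\Pi_{v_1}\cong\pi_1$ at an auxiliary place $v_1$ where $H_{k,v_1}$ is as in \S\ref{SS:hypo}, and $\Pi_v$ a subquotient of an unramified principal series at every other place. Apply V.~Lafforgue to obtain $\rho_{\Pi,l}\colon\Gal(k^{\rm sep}/k)\to{^L}H_k(\overline\Q_l)$, and set
\[ \mathcal{L}_l(\pi) \;:=\; \rho_{\Pi,l,v_0}. \]
Proposition \ref{P:deligne}(iii) ensures this is a discrete series (hence elliptic) L-parameter of $H_F$, and Proposition \ref{P:indep} guarantees that $\mathcal{L}_l(\pi)$ is independent of the choices of $k$, $H_k$ and $\Pi$ (since the purity hypothesis is supplied by Proposition \ref{P:deligne}(ii)).

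For (ii), let $\pi$ be generic supercuspidal and let $\phi_\pi^{\rm LS}$ denote the parameter attached to $\pi$ by the Langlands--Shahidi lift of \cite{L1,L2} described in (\ref{E:lift}). That lift satisfies $\gamma(s,\pi\times\tau,\psi)=\gamma(s,\phi_\pi^{\rm LS}\otimes\phi_\tau,\psi)$ for every generic $\tau$ on $\GL_r(E)$, together with the matching of $L$- and $\varepsilon$-factors. On the other hand, Corollary \ref{C:gamma-stab} applied to $\Pi$ gives $\gamma(s,\pi\times\tau,\psi)=\gamma(s,\mathcal{L}_l(\pi)\otimes\phi_\tau,\psi)$. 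Since both $\mathcal{L}_l(\pi)$ and $\phi_\pi^{\rm LS}$ are semisimple Weil--Deligne representations into the same classical group and have equal twisted $\gamma$-factors against all supercuspidal $\tau$ on every $\GL_r(E)$, \cite[Lemma~12.3]{GS} (a gamma-factor converse statement, analogous to the local converse theorem for $\GL_N$) forces $\mathcal{L}_l(\pi)=\phi_\pi^{\rm LS}$. The equalities of $L$- and $\varepsilon$-factors in (ii) are then inherited from Langlands--Shahidi theory.

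For (iii), globalize the Levi datum $(H_F\times\GL_r(E),\pi\otimes\tau)$ inside the ambient classical group $G_F$: enlarge the globalization of (i) by choosing a cuspidal $\mathcal{T}$ on $\GL_r(\A_E)$ with $\mathcal{T}_{w_0}\cong\tau$ (here $w_0|v_0$ splits $E/F$ appropriately, using Corollary \ref{T:main2} for $\GL_r$) and view $\Pi\boxtimes\mathcal{T}$ as a cuspidal automorphic representation of the Levi of a maximal parabolic of an ambient classical $k$-group $G$. Applying Corollary \ref{C:plan} to $G$ expresses the local Plancherel measure at $v_0$ as the product, over $r_1$ and $r_2$, of Galois $\gamma$-factors formed from $\rho_{\Pi\boxtimes\mathcal{T},v_0}=\mathcal{L}_l(\pi)\otimes\phi_\tau$ (the second factor being $\phi_\tau$ by the local Langlands correspondence for $\GL_r$ and property (a) of V.~Lafforgue at unramified places, compatible with Satake). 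Unpacking $r_1$ and $r_2$ as in \S\ref{S:LLC} gives exactly the identity in (iii). The case of general $\tau$ follows by multiplicativity of Plancherel measures from the supercuspidal case.

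For (iv), uniqueness of $\mathcal{L}_l$ is immediate from (iii): twisting by all supercuspidals $\tau$ of every $\GL_r(E)$ determines $\mathcal{L}_l(\pi)$ up to equivalence via \cite[Lemma~12.3]{GS}. Independence of $l$ is then automatic, since the right-hand side of (iii) is an $l$-independent analytic object. The main obstacle in this plan is the step in (ii)--(iv) where one passes from the equality of twisted $\gamma$-factors to the equality of parameters: without invoking \cite[Lemma~12.3]{GS} (which itself uses stability of $\gamma$-factors and a careful inductive argument on the dimension of $\phi_\tau$), one cannot conclude that $\mathcal{L}_l(\pi)$ coincides with any other parameter satisfying the same identities. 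The Working Hypothesis is used exactly once, namely to produce the irreducible tamely ramified parameter at $v_1$ needed in Proposition \ref{P:deligne}(i) to pin down $\rho_{\Pi,v_1}$ and thereby force $\rho_\Pi$ to be (almost) irreducible, which in turn is what makes purity and hence discreteness at $v_0$ available.
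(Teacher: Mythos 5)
Your proposal is correct and follows essentially the same route as the paper, which presents Theorem \ref{T:L} as a summary (``To summarise, we have shown'') of the preceding material: globalization via Corollary \ref{T:main2} with the auxiliary place $v_1$ supplied by the Working Hypothesis, V.~Lafforgue's Galois representation, purity and discreteness from Proposition \ref{P:deligne}, well-definedness and $l$-independence from Proposition \ref{P:indep}, and the identification with the Langlands--Shahidi parameter in the generic case via the $\gamma$-factor comparison and \cite[Lemma~12.3]{GS}. Your reconstruction of the implicit argument, including the role of \cite[Lemma~12.3]{GS} in (ii) and (iv) and of Corollary \ref{C:plan} in (iii), matches the paper's intent.
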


 \vskip 10pt
 It is natural to ask if the map $\mathcal{L}$ defined in Theorem \ref{T:L} can be extended to all irreducible representations of $H_F$.  
 The key step is to extend it to the set of irreducible discrete series representations. If one can do this, then an application of the Langlands classification theorem would extend it to all irreducible representations.    To do so it is necessary to use another deep result, namely the classification of discrete series representations of classical groups in terms of supercuspidal ones due to Moeglin-Tadi\'c \cite{M, MT}. The results of \cite{M, MT} were obtained under a basic assumption (BA). In the next theorem, we shall verify the assumption (BA) in \cite{MT}.
 
 \vskip 5pt
 
 \subsection{\bf Reducibility of Generalized Principal Series.}
 Using the above results, we can obtain the reducibility points of the generalized principal series representations of quasi-split classical groups induced from supercuspidal representations of maximal parabolic subgroups. More precisely, 
 let $P  = M\cdot N \subset G$ be a maximal parabolic subgroup of a classical group $G$ over $F$, and suppose that its Levi factor $M$ is isomorphic to $\GL_r(E)  \times 
H_F$.  Let $\tau \otimes \pi$ be a unitary supercuspidal representation of $M$ and consider the generalized principal series representation
\[  {\rm I}(s, \tau \otimes \pi)  = {\rm Ind}_P^G  \tau  |\det|_E^s  \otimes \pi \]
for $s \in \R$, with its associated Plancherel measure $\mu(s, \tau \otimes \pi,\psi)$. 
We shall make use of the following well-known properties of the Plancherel measure for supercuspidal inducing data due to Harish-Chandra and Silberger (see \cite[Pg. 296, Remark 2 and Lemma 5.4.2.4]{Si1} and  \cite[Lemmas 1.1 and  1.2]{Si2}; see also \cite{Ca, S2, W}). 
\vskip 5pt

\begin{lemma}  \label{L:silberger}
(a)   If $\tau^{\vee} \ncong \tau^c$, then ${\rm I}(s, \tau \otimes \pi)$ is irreducible for all $s \in \R$.
\vskip 5pt

\noindent (b) If $\tau^{\vee} \cong \tau^c$, then ${\rm I}(0, \tau \otimes \pi)$ is reducible if and only if $\mu(0, \tau \otimes \pi, \psi)$ is nonzero, in which case, ${\rm I}(s, \tau \otimes \pi)$ is irreducible for all real numbers $s \ne 0$ and $\mu(s, \tau \otimes \pi, \psi)$ is holomorphic at all $s  \in \R$. 
\vskip 5pt

\noindent (c) If $\tau^{\vee} \cong \tau^c$, but $\mu(0,\tau \otimes \pi, \psi) =0$ so that ${\rm I}(0, \tau \otimes \pi)$ is irreducible, then  ${\rm I}(s_0, \tau \otimes \pi)$ is reducible for $s_0 >0$ if and only if $\mu(s, \tau \otimes \pi, \psi)$ has a pole at $s = s_0$. Moreover, there is a unique $s_0 >0$ such that reducibility occurs, and at this point of reducibility,  the pole of $\mu(s, \tau \otimes \pi, \psi)$  is simple. Further, $\mu(s, \tau \otimes \pi, \psi)$ is nonzero for any real $s \ne 0$. 
\vskip 5pt

\noindent In particular, if $\tau^{\vee} \cong \tau^c$, there is a unique $s_0 \geq 0$ such that ${\rm I}(s_0, \tau \otimes \pi)$ is reducible.
\end{lemma}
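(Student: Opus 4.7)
The plan is to reduce the statement to the analytic behavior of the Plancherel measure via the Harish-Chandra identity
\[  M(-s, \tau \otimes \pi) \circ M(s, \tau \otimes \pi) = \mu(s, \tau \otimes \pi, \psi)^{-1},  \]
where $M(s, \cdot)$ denotes the standard intertwining operator from ${\rm I}(s, \tau \otimes \pi)$ to ${\rm I}(-s, w_0 \cdot (\tau \otimes \pi))$ with $w_0$ the unique nontrivial element of the relative Weyl group $W(M,G) \cong \Z/2$. The first step is to describe $w_0$ explicitly: for the classical groups at hand, $w_0$ acts trivially on the $H_F$-factor of $M \cong \GL_r(E) \times H_F$ and sends $\tau \mapsto (\tau^c)^{\vee}$ on the $\GL_r(E)$-factor, so that $w_0 \cdot (\tau |\det|_E^s \otimes \pi) \cong (\tau^c)^{\vee} |\det|_E^{-s} \otimes \pi$.

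For part (a), the hypothesis $\tau^c \ncong \tau^{\vee}$ means no real twist of $\tau \otimes \pi$ is fixed by $w_0$. I would invoke Harish-Chandra's commuting algebra theorem (in the form of \cite[Ch. 5]{Si1}): the endomorphism algebra of ${\rm I}(s, \tau \otimes \pi)$ is controlled by the $w_0$-stabilizer of $\tau |\det|_E^s \otimes \pi$, which is trivial under the hypothesis, so the induced representation is irreducible for all real $s$. For part (b), the hypothesis $\tau^c \cong \tau^{\vee}$ makes $M(s)$ a meromorphic family of self-intertwinings after fixing an identification $\tau \cong (\tau^c)^{\vee}$. At $s = 0$ the relation $M(0) \circ M(0) = \mu(0)^{-1} \cdot {\rm id}$ shows $M(0)$ is forced to be scalar when $\mu(0) = \infty$ or $\mu(0) = 0$; when $\mu(0) \in \C^{\times}$, on the other hand, $M(0)$ is a genuine non-scalar involution (up to a constant), and by Knapp-Stein this is equivalent to reducibility of ${\rm I}(0, \tau \otimes \pi)$.

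For part (c), I would first note that when $\mu(0) = 0$, the operator $M(0)$ has a pole but ${\rm I}(0, \tau \otimes \pi)$ remains irreducible, so reducibility can only occur at real $s > 0$. At such a point $s_0$, the Langlands quotient construction identifies reducibility of ${\rm I}(s_0, \tau \otimes \pi)$ with $M(s_0)$ failing to be an isomorphism, which via the Plancherel identity forces a pole of $\mu$ at $s_0$. The main obstacle, requiring input beyond formal manipulation, is the uniqueness of such $s_0$, simplicity of the pole, and nonvanishing of $\mu$ on the real axis outside $s = 0$. I would deduce these from the detailed structure of $\mu(s, \tau \otimes \pi, \psi)$ as a rational function in $q_F^{-s}$: supercuspidality of $\tau \otimes \pi$ and maximality of the parabolic yield a sharp bound on its degree, the symmetry $\mu(-s) = \mu(s)$ pairs poles and zeros, and unitarizability of ${\rm I}(s, \tau \otimes \pi)$ on the complementary series up to the first real pole rules out additional real poles or zeros. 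This is precisely the content of Silberger's lemmas cited as \cite[Lemmas 1.1 and 1.2]{Si2}.
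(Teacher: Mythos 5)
The paper does not actually prove this lemma: it is stated as a ``well-known'' result of Harish-Chandra and Silberger and is justified purely by citation to \cite[Pg.~296, Remark 2 and Lemma 5.4.2.4]{Si1} and \cite[Lemmas 1.1 and 1.2]{Si2} (with \cite{Ca,S2,W} as further references). Your sketch is therefore doing more than the paper does, and it correctly identifies the standard machinery: the action of the nontrivial relative Weyl element $w_0$ sending $\tau\mapsto(\tau^c)^{\vee}$, irreducibility of induced-from-supercuspidal representations when no Weyl element fixes the inducing data (part (a)), the Knapp--Stein/Silberger dichotomy at $s=0$ (part (b)), and the reduction of the off-axis reducibility, uniqueness of $s_0$, simplicity of the pole and nonvanishing of $\mu$ to Silberger's lemmas (part (c)) --- which is exactly where the paper also places the burden. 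Two small cautions. First, in part (b) the implication ``$\mu(0)\in\C^{\times}$ $\Rightarrow$ $M(0)$ is non-scalar'' does not follow formally from $M(0)\circ M(0)=\mu(0)^{-1}\cdot\mathrm{id}$, since a scalar $c\cdot\mathrm{id}$ with $c^2=\mu(0)^{-1}$ also satisfies this relation; the genuine content is Silberger's theorem that for discrete-series (here supercuspidal) inducing data fixed by $w_0$, reducibility at $s=0$ is \emph{equivalent} to $\mu(0)\neq 0$, and this should be quoted rather than derived from the composition identity. Second, the commuting algebra theorem in the form you cite applies to unitary inducing data, so for real $s\neq 0$ in part (a) you should instead appeal to the general irreducibility criterion for parabolic induction from supercuspidals (Bruhat theory), which holds without unitarity. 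With those attributions made precise, your outline is a faithful account of the results the paper is quoting.
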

\vskip 5pt 

 The above properties of the Plancherel measure imply the following proposition:
\vskip 5pt

\begin{prop}  \label{P:noholes}
Let $\phi_{\pi} = \mathcal{L}(\pi)$ be the L-parameter of the supercuspidal representation $\pi$ supplied by Theorem \ref{T:L}.  
Then $\phi_{\pi}$ is ``sans trou" (without holes) in the sense of Moeglin \cite{M}. More precisely,
for any irreducible representation $\rho$ of $W_E$, such that $\det \rho$ is unitary,  let 
\[  {\rm Jord}_{\rho}(\phi_{\pi}) = \{  a \in \mathbb{N}:  \rho \otimes S_a  \subset \phi_{\pi} \}, \]
where $S_a$ denotes the $a$-dimensional irreducible representation of ${\rm SL}_2(\C)$.  Then
$ {\rm Jord}_{\rho}(\phi_{\pi})$ can be nonempty only if $\rho$ is conjugate-self-dual of some sign $\epsilon(\rho)$, in which case
 all elements in $ {\rm Jord}_{\rho}(\phi_{\pi})$ are of the same parity: $a \in  {\rm Jord}_{\rho}(\phi_{\pi})$ is odd if and only if $\epsilon(\rho)  = \epsilon(H_F)$.   Then for all $\rho$ such that $ {\rm Jord}_{\rho}(\phi_{\pi})$ is nonempty and  any integer $a> 2$,
\[   a \in {\rm Jord}_{\rho}(\phi_{\pi})  \Longrightarrow a-2  \in {\rm Jord}_{\rho}(\phi_{\pi}). \]
\end{prop}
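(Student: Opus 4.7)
The plan is to combine the structure of the discrete series L-parameter $\phi_\pi$ (from Theorem \ref{T:L} and Proposition \ref{P:deligne}(iii)) with the Plancherel measure formula (Theorem \ref{T:L}(iii)) and the Harish-Chandra--Silberger reducibility dichotomy (Lemma \ref{L:silberger}). The assertions about conjugate-self-duality of $\rho$ and the parity of elements of $\mathrm{Jord}_\rho(\phi_\pi)$ are essentially structural: by Proposition \ref{P:deligne}(iii), $\phi_\pi$ is a multiplicity-free sum of irreducible conjugate-self-dual representations $\rho_i \otimes S_{b_i}$ of $W\!D_E$ of sign $\epsilon(H_F)$. Since $S_b$ is self-dual of sign $(-1)^{b-1}$, the factorization $\epsilon(H_F) = \epsilon(\rho_i) \cdot (-1)^{b_i - 1}$ forces each $\rho_i$ to be conjugate-self-dual with $\epsilon(\rho_i) = \epsilon(H_F) \cdot (-1)^{b_i - 1}$, so $\mathrm{Jord}_\rho(\phi_\pi) \neq \emptyset$ implies $\rho$ is conjugate-self-dual with parity of each $b \in \mathrm{Jord}_\rho(\phi_\pi)$ determined by whether $\epsilon(\rho)$ matches $\epsilon(H_F)$.

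For the sans trou condition, I apply Theorem \ref{T:L}(iii) with $\tau = \rho$ and analyze the poles and zeros of
\[
\mu(s, \rho \otimes \pi, \psi) = \gamma(s, \phi_\pi^\vee \otimes \phi_\rho, \psi)\, \gamma(-s, \phi_\pi \otimes \phi_\rho^\vee, \overline{\psi}) \cdot \gamma(2s, R \circ \phi_\rho, \psi)\, \gamma(-2s, R^\vee \circ \phi_\rho, \overline{\psi})
\]
on the real half-line $s > 0$. Writing $\gamma = \epsilon \cdot L(1-s,\cdot^\vee)/L(s,\cdot)$ and using the Weil-Deligne $L$-factor identity $L(s, r \otimes S_a) = L(s + (a-1)/2, r)$, a direct computation shows that the first pair of $\gamma$-factors has simple poles at $s = (a+1)/2$ and simple zeros at $s = (a-1)/2$ as $a$ ranges over $\mathrm{Jord}_\rho(\phi_\pi)$. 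A uniform case-check on the four classical types shows that $R \circ \phi_\rho$ contains the trivial representation precisely when $\epsilon(\rho) = -\epsilon(H_F)$, in which case the $R$-pair contributes an additional simple pole at $s = 1/2$ (plus a harmless zero at $s = 0$). Setting $a_k = \max \mathrm{Jord}_\rho(\phi_\pi)$, the first-pair pole at $s = (a_k+1)/2$ admits no cancelling zero (this would require $a_k + 2 \in \mathrm{Jord}_\rho(\phi_\pi)$), so $\mu$ genuinely has a pole on $s > 0$; hence we are in case (c) of Lemma \ref{L:silberger}, whose crucial conclusion for this argument is that $\mu(s,\rho \otimes \pi,\psi)$ has \emph{no zeros} on real $s \neq 0$. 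Now if $a \in \mathrm{Jord}_\rho(\phi_\pi)$ satisfies $a \geq 3$ and $a - 2 \notin \mathrm{Jord}_\rho(\phi_\pi)$, the first-pair zero at $s = (a-1)/2 \geq 1$ is uncancelled: the only candidate cancelling poles are a first-pair pole at $(a'+1)/2 = (a-1)/2$, forcing $a' = a - 2 \in \mathrm{Jord}_\rho(\phi_\pi)$ (contradiction), or the $R$-pair pole at $s = 1/2$, which equals $(a-1)/2$ only when $a = 2$, excluded. This contradicts the no-zeros clause, establishing $a - 2 \in \mathrm{Jord}_\rho(\phi_\pi)$.

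The main technical obstacle is the careful pole-and-zero bookkeeping across the three factors of the Plancherel measure, together with the uniform verification across $\SO_{2n+1}, \Sp_{2n}, \SO_{2n}, \U_n$ that $R \circ \phi_\rho \supset \mathbf{1}$ iff $\epsilon(\rho) = -\epsilon(H_F)$. The conceptual heart is that Silberger's case (c) forbids zeros of $\mu$ on $s > 0$, which is exactly what translates a ``hole'' in $\mathrm{Jord}_\rho(\phi_\pi)$ into a forbidden zero at $s = (a-1)/2$; once this translation is in place, the sans trou condition follows immediately.
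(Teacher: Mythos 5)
Your proposal is correct and follows essentially the same route as the paper: express $\mu(s,\tau_\rho\otimes\pi,\psi)$ via the Galois-theoretic gamma factors of Theorem \ref{T:L}(iii), do the pole/zero bookkeeping (poles at $\frac{a+1}{2}$ surviving when $a+2\notin{\rm Jord}_\rho(\phi_\pi)$, zeros at $\frac{a-1}{2}$ when $a-2\notin{\rm Jord}_\rho(\phi_\pi)$, plus the possible $R$-pole at $s=1/2$), observe that the uncancelled pole at $\frac{a_\rho(\pi)+1}{2}$ places you in case (c) of Lemma \ref{L:silberger}, and use the resulting nonvanishing of $\mu$ on real $s>0$ to rule out holes. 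Your explicit derivation of the conjugate-self-duality and parity statements from Proposition \ref{P:deligne}(iii) is a welcome detail the paper leaves implicit, but the argument is the same.
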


\vskip 5pt

\begin{proof}
Suppose that $\rho$ is conjugate-self-dual and ${\rm Jord}_{\rho}(\phi_{\pi})$ is nonempty. 
Let $\tau_{\rho}$ be the supercuspidal representation of $\GL_r(E)$ (with $r = \dim \rho$) with L-parameter $\rho$, and consider the family of induced representations 
${\rm I}(s, \tau_{\rho} \otimes \pi)$. Recall that 
\[  \mu(s, \tau_{\rho} \otimes \pi, \psi) = \gamma(s,  \rho \otimes \phi_{\pi}^{\vee},\psi) \cdot  \gamma(-s,  \rho^\vee \otimes \phi_{\pi},\overline{\psi}) \cdot \gamma(2s, R \circ \rho,\psi) \cdot \gamma(-2s, R^{\vee} \circ \rho, \overline{\psi}). \]
The RHS is essentially a ratio of products of local L-functions and epsilon factors, and the part which could contribute poles or zeros in $s \geq 0$ is:
\[ \left(  \prod_{a \in   {\rm Jord}_{\rho}(\phi_{\pi})}  \frac{L(\frac{a+1}{2} -s, \rho^{\vee} \otimes \rho)}{L(\frac{a-1}{2} -s, \rho^{\vee} \otimes \rho)}  \right)  \cdot 
\left( \frac{L(1-2s, R^{\vee} \circ \rho) \cdot L(1+2s, R \circ \rho)}{L(2s, R \circ \rho) \cdot L(-2s, R^{\vee} \circ \rho)}\right) . \]
 From this, we see that  the poles and zeros of $ \mu(s, \tau_{\rho} \otimes \pi, \psi)$ occur at the following points:
 \begin{itemize}
 \item for $\epsilon(\rho)  = \epsilon(H_F)$:   
 \[ \text{Poles at  $\frac{a+1}{2}$  with  $a \in {\rm Jord}_{\rho}(\phi_{\pi})$ but $a+2 \notin {\rm Jord}_{\rho}(\phi_{\pi})$,} \]
 and
 \[  \text{Zeros at $\frac{a-1}{2}$ with $a \in {\rm Jord}_{\rho}(\phi_{\pi})$ but $a-2 \notin {\rm Jord}_{\rho}(\phi_{\pi})$.} \]
 \vskip 5pt
 
 \item for $\epsilon(\rho)= - \epsilon(H_F)$:
 \[ \text{Poles at  $\frac{a+1}{2}$  with  $a=0$ or $a  \in {\rm Jord}_{\rho}(\phi_{\pi})$, but $a+2 \notin {\rm Jord}_{\rho}(\phi_{\pi})$,}   
  \]
 and
 \[  \text{Zeros at  $0$ and $\frac{a-1}{2}$ with $a \in {\rm Jord}_{\rho}(\phi_{\pi})$ but $a-2 \notin {\rm Jord}_{\rho}(\phi_{\pi})$.} \]
 \end{itemize}
 Hence, if we set
 \[  a_{\rho}(\pi)  = {\rm max}  \,  {\rm Jord}_{\rho}(\phi_{\pi}), \]
 then $\mu(s, \tau_{\rho} \otimes \pi, \psi)$ has a pole at $s = (a_{\rho}(\pi) +1)/2 \geq 1$, so that
$s_0 =  (a_{\rho}(\pi) +1)/2$ must be the unique reducibility point of $I(s, \tau_{\rho} \otimes \pi)$ with $s \geq 0$. 
In particular, by Lemma \ref{L:silberger}, we must have $\mu(0, \tau_{\rho} \otimes \pi, \psi)  = 0$ but $\mu(s,  \tau_{\rho} \otimes \pi,\psi) \ne 0$ for any $s >0$. 
Hence, we conclude that in both cases above, for all $a >  2$, 
\[  a \in {\rm Jord}_{\rho}(\phi_{\pi})  \Longrightarrow a-2  \in {\rm Jord}_{\rho}(\phi_{\pi}). \]
This proves the proposition. 
 \end{proof}

 Now we have the following theorem which establishes the basic assumption (BA) in \cite{MT}.
\vskip 5pt

\begin{thm}  \label{T:redu}
Let $\phi_{\pi} = \mathcal{L}(\pi)$ be the L-parameter of $\pi$ supplied by Theorem \ref{T:L}, and let $\phi_{\tau}$ be the L-parameter of a unitary supercuspidal representation $\tau$ of $\GL_r(E)$.   The representation ${\rm I}(s_0, \tau \otimes \pi)$ is reducible if and only if $\tau^{\vee} \cong \tau^c$ and one of the following holds:\vskip 5pt

\begin{itemize}
\item[(i)]  $ s_0 = \frac{a_{\tau}(\pi) +1}{2} \geq 1$, with
$a_{\tau}(\pi) =  {\rm max}  \,  {\rm Jord}_{\phi_{\tau}}(\phi_{\pi})$, if   ${\rm Jord}_{\phi_{\tau}}(\phi_{\pi})$ is nonempty;
\vskip 5pt

\item[(iii)]  $s_0=1/2$ if    ${\rm Jord}_{\phi_{\tau}}(\phi_{\pi})$ is empty and $L(2s, R \circ  \phi_{\tau})$ has a pole at $s = 0$ (i.e. $\epsilon(\phi_{\tau}) = - \epsilon(H_F)$);
\vskip 5pt

\item[(i)] $s_0 =0$ if ${\rm Jord}_{\phi_{\tau}}(\phi_{\pi})$ is empty and  $L(2s, R \circ  \phi_{\tau})$ is holomorphic at $s = 0$ (i.e. $\epsilon(\phi_{\tau}) = \epsilon(H_F)$).
\end{itemize}
 \end{thm}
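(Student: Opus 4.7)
The strategy is to read off the reducibility of $\mathrm{I}(s,\tau\otimes\pi)$ from the location of poles and zeros of the Plancherel measure $\mu(s,\tau\otimes\pi,\psi)$, using the Galois theoretic expression provided by Theorem~\ref{T:L}(iii) together with Lemma~\ref{L:silberger}. The crucial input is the ``sans trou'' property of $\phi_\pi=\mathcal{L}(\pi)$ established in Proposition~\ref{P:noholes}, which gives rigid control of the Jordan blocks of $\phi_\pi$ and thereby forces the Plancherel measure to have a very restricted pole/zero pattern.

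First I would dispose of the case $\tau^\vee\not\cong\tau^c$: then $\phi_\tau$ is not conjugate-self-dual, so $\mathrm{Jord}_{\phi_\tau}(\phi_\pi)$ is empty and $R\circ\phi_\tau$ does not contain the trivial representation. Irreducibility of $\mathrm{I}(s,\tau\otimes\pi)$ for all real $s$ follows directly from Lemma~\ref{L:silberger}(a). For the remainder of the proof assume $\tau^\vee\cong\tau^c$, so that $\phi_\tau$ is conjugate-self-dual of some sign $\epsilon(\phi_\tau)$, and write $\rho=\phi_\tau$ for brevity.

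The next step is to expand each of the four $\gamma$-factors in
\[
\mu(s,\tau\otimes\pi,\psi)=\gamma(s,\phi_\pi^\vee\otimes\rho,\psi)\cdot\gamma(-s,\phi_\pi\otimes\rho^\vee,\overline\psi)\cdot\gamma(2s,R\circ\rho,\psi)\cdot\gamma(-2s,R^\vee\circ\rho,\overline\psi)
\]
using $\gamma(u,\sigma,\psi)=\varepsilon(u,\sigma,\psi)\cdot L(1-u,\sigma^\vee)/L(u,\sigma)$. The epsilon factors are entire and nonvanishing, so poles and zeros in $s\in\mathbb{R}$ come solely from the L-factors. Writing $\phi_\pi=\bigoplus_{\rho'}\bigoplus_{a\in\mathrm{Jord}_{\rho'}(\phi_\pi)}\rho'\otimes S_a$ and using the standard shift $L(u,\rho'\otimes S_a)=L(u+(a-1)/2,\rho')$, the contribution of the first two $\gamma$-factors reduces to
\[
\prod_{a\in\mathrm{Jord}_\rho(\phi_\pi)}\frac{L\!\bigl(\tfrac{a+1}{2}-s,\rho^\vee\otimes\rho\bigr)}{L\!\bigl(\tfrac{a-1}{2}-s,\rho^\vee\otimes\rho\bigr)},
\]
and the last two to
\[
\frac{L(1-2s,R^\vee\circ\rho)\,L(1+2s,R\circ\rho)}{L(2s,R\circ\rho)\,L(-2s,R^\vee\circ\rho)}.
\]
Using conjugate-self-duality of $\rho$ and the fact that $L(s,R\circ\rho)$ has a pole at $s=0$ if and only if $\epsilon(\rho)=-\epsilon(H_F)$, I would then tabulate the poles and zeros of $\mu$ on $s\ge 0$.

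The analysis then splits into three mutually exclusive cases. If $\mathrm{Jord}_\rho(\phi_\pi)$ is nonempty, the sans trou property (Proposition~\ref{P:noholes}) says that $\mathrm{Jord}_\rho(\phi_\pi)$ is either $\{1,3,\dots,2m-1\}$ or $\{2,4,\dots,2m\}$ according as $\epsilon(\rho)=\epsilon(H_F)$ or $\epsilon(\rho)=-\epsilon(H_F)$. After telescoping, the first product has its unique positive pole at $s=(a_\rho(\pi)+1)/2$ (where $a_\rho(\pi)=\max\mathrm{Jord}_\rho(\phi_\pi)$) and, in the $\epsilon(\rho)=-\epsilon(H_F)$ sub-case, a zero at $s=1/2$ which exactly cancels the pole at $s=1/2$ coming from the $R$-factors. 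Thus $\mu$ has a unique pole on $s>0$ at $s_0=(a_\rho(\pi)+1)/2\ge 1$ and a zero at $s=0$, so Lemma~\ref{L:silberger}(c) gives irreducibility at $s=0$ and reducibility precisely at $s_0$; this yields case~(i). If $\mathrm{Jord}_\rho(\phi_\pi)$ is empty and $\epsilon(\rho)=-\epsilon(H_F)$, the first product is $1$ while the second contributes a simple pole at $s=1/2$ and a zero at $s=0$, so Lemma~\ref{L:silberger}(c) gives case~(ii). If $\mathrm{Jord}_\rho(\phi_\pi)$ is empty and $\epsilon(\rho)=\epsilon(H_F)$, both factors are regular at $s=0$, yielding $\mu(0,\tau\otimes\pi,\psi)\ne 0$ (the nonvanishing coming from $L(1,R^\vee\circ\rho)L(1,R\circ\rho)/(L(0,R\circ\rho)L(0,R^\vee\circ\rho))$ and nonvanishing $\varepsilon$-factors), hence Lemma~\ref{L:silberger}(b) forces reducibility at $s_0=0$ and irreducibility elsewhere; this is case~(iii).

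The main technical obstacle will be the careful bookkeeping of L-factor poles and zeros at the boundary point $s=1/2$ in sub-case $\mathrm{Jord}_\rho(\phi_\pi)\ne\emptyset$ with $\epsilon(\rho)=-\epsilon(H_F)$: there one must verify that the apparent double contribution from the telescoping product and from $R\circ\rho$ cancels, leaving the unique reducibility point predicted by Silberger's Lemma. The sans trou property (in particular the fact that $\mathrm{Jord}_\rho$, when nonempty with $\epsilon(\rho)=-\epsilon(H_F)$, automatically contains $2$) is exactly what makes this cancellation occur, and it is the key bridge between the Galois side supplied by V. Lafforgue's construction and the uniqueness statements on the analytic side.
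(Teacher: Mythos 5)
Your proposal is correct and follows essentially the same route as the paper: the paper's proof of Theorem \ref{T:redu} simply invokes Lemma \ref{L:silberger} together with the pole/zero computation of $\mu(s,\tau\otimes\pi,\psi)$ carried out in the proof of Proposition \ref{P:noholes}, which is exactly the expansion into local L-factors and the three-case analysis you give. Your extra care at $s=1/2$ (the cancellation between the zero from the minimal Jordan block $a=2$ and the pole from the $R\circ\phi_\tau$ factor when $\epsilon(\phi_\tau)=-\epsilon(H_F)$) is implicit in the paper's tabulation and is correctly handled.
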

\vskip 5pt

\begin{proof}
This follows immediately from Lemma \ref{L:silberger} and the proof of Proposition \ref{P:noholes}. 
  \end{proof}
\vskip 5pt

We note that such a theorem was first shown by Shahidi \cite{Sh2} for general quasi-split groups and generic supercuspidal inducing data, in which case the reducibility points are at $0$, $1/2$ or $1$.

 \vskip 10pt
  
 \subsection{\bf Results of Moeglin-Tadi\'c}
 
Theorem \ref{T:redu} renders the results of Moeglin-Tadi\'c \cite{M, MT} unconditional. This places us in a position to extend the map $\mathcal{L}$ in Theorem \ref{T:L} from supercuspidal representations to discrete series representations. The  procedure (due to Moeglin-Tadi\'c) for extending $\mathcal{L}$ from supercuspidal representations to discrete series representations  has been explained in great detail and clarity in \cite[\S 7]{CKPSS2}. Let us give a brief description here, following \cite[\S 7]{CKPSS2} closely. 
  
  \vskip 5pt
  
  Moeglin-Tadi\'c showed that any nonsupercuspidal discrete series representation $\pi$ can be uniquely expressed as a subquotient of an induced representation of the form
  \begin{equation}  \label{E:MT}
    {\rm Ind}_{P_F}^{H_F}  (\bigotimes_{i \in S} \delta_i)  \otimes  (\bigotimes_{j \in T} \delta_j')  \otimes \pi_0 , \end{equation}
  where
  \begin{itemize}
   \item $\pi_0$ is a supercuspidal representation of a smaller classical group of the same type as $H_F$.
   \vskip 5pt
   
  \item For $i\in S$, $\delta_i$ is the generalized Steinberg representation of $\GL_{k_i}$ contained in the induced representation
  \[   \tau_i |-|^{-(b_i -1)/2}  \times \cdots \times  \tau_i|-|^{(a_i -1)/2}, \]
  where $a_i > b_i > 0$ are positive integers of the same parity and  $\tau_i$ is a  supercuspidal representation which is conjugate-self-dual with sign $\epsilon(H_F) \cdot (-1)^{a_i-1}$.
  \vskip 5pt
  
  \item For $j \in T$, $\delta'_j$ is the generalized Steinberg representation of $\GL_{k_j'}$ contained in the induced representation
  \[ \tau'_j |-|^{(c'_j+1)/2  }  \times \cdots  \times \tau'_j |-|^{(a_j'-1)/2} \]
  where $\tau'_j$ is a conjugate-self-dual  supercuspidal representation and $c_j'  \in \{ 1,2 \}$ has the same parity as $a_j'$ with $a_j' \geq c_j' +2$.
  Moreover, the $\tau_j'$'s are pairwise distinct and  
  \begin{alignat}{2}
   &\text{$a_j'$ odd}&& \Longrightarrow \text{$L(s, \phi_{\tau'_j}^{\vee} \otimes \phi_{\pi_0})$ has a pole at $s  =0$.} \notag \\
  &\text{$a_j'$ even}  &&\Longrightarrow \text{$L(s,  r_2 \circ \phi_{\tau'_j})$ has a pole at $s = 0$.}\notag  \end{alignat}
  In particular, if $a_j'$ is odd, then $\phi_{\tau'_j}$ is a summand in $\phi_{\pi_0}$ and 
  \[  \phi_{\pi_0}   -  \bigoplus_{j \in T: \text{$a_j'$ odd}}  \phi_{\tau'_j}  \]
  is an elliptic L-parameter for a smaller classical group of the same type as $H_F$.
    \end{itemize}
    \vskip 5pt
    
  Given this,  one can define the L-parameter of $\pi$ by:
  \begin{align}  \label{A:para}
    \mathcal{L}(\pi) =&\left( \bigoplus_{i \in S} \phi_{\tau_i} \otimes (S_{a_i} \oplus S_{b_i}) \right) \oplus \left(\bigoplus_{j \in T: \text{$a_j'$ even}}  \phi_{\tau'_j} \otimes S_{a'_j} \right)  \notag \\
  &  \oplus 
  \left( \bigoplus_{j \in T: \text{$a_j'$ odd}}   \phi_{\tau'_j} \otimes S_{a_j'} \right) \oplus \left( \phi_{\pi_0}   -  \bigoplus_{j \in T: \text{$a_j'$ odd}}  \phi_{\tau'_j} \right).  
  \end{align}
 It was shown in \cite{M} and \cite{MT} that this is a discrete series L-parameter for $H_F$, i.e., it is multiplicity-free. 
 
 \vskip 5pt
 
To see  that the Plancherel measure $\mu(s, \pi \times \tau,\psi)$ can be expressed in terms of $\mathcal{L}(\pi)$ as in Theorem \ref{T:L}(iii), we note that  by the multiplicativity property of Plancherel measures, $\mu(s, \pi \times \tau,\psi)$ depend only on the supercuspidal support of $\pi \otimes \tau$. Consider the representation  $\phi$ of $W_F$  associated to the supercuspidal support of the induced representation (\ref{E:MT}). Setting
\[  \rho_a =  \bigoplus_{i = 0}^{a-1}  |-|^{\frac{a-1}{2} - i }  \]
to be the L-parameter of the trivial representation of $\GL_a(F)$ for simplicity, we see that
\begin{align}  \label{A:para2}
  \phi = &\left( \bigoplus_{i \in S}  \phi_{\tau_i} \otimes (\rho_{a_i} \oplus \rho_{b_i}) \right)  \oplus \left( \bigoplus_{j\in T: \text{$a_j'$ even}}  \phi_{\tau'_j} \otimes \rho_{a'_j} \right)  \notag \\
   & \oplus  \left( \bigoplus _{j \in T: \text{$a_j'$ odd}}   \phi_{\tau'_j} \otimes \rho_{a'_j}  \right)  \oplus \left( \phi_{\pi_0} - \bigoplus_{j \in T: \text{$a_j'$ odd}}  \phi_{\tau'_j} \right). 
   \end{align}
 By multiplicativity and Theorem \ref{T:L}(iii)( for supercuspidal representations), one has
 \begin{equation}  \label{E:mu}
 \mu(s, \pi \times \tau,\psi) =  \gamma(s,\phi^{\vee} \otimes \phi_{\tau}, \psi) \cdot   \gamma(-s,\phi \otimes \phi_{\tau}^{\vee},\overline{\psi}) \cdot  \gamma (2s, R \circ \phi_{\tau},\psi) \cdot \gamma(-2s, R^{\vee} \circ \phi_{\tau},\overline{\psi}).  \end{equation}
 Comparing (\ref{A:para}) and (\ref{A:para2}), and noting that 
 \[  \gamma(s,  \Sigma \otimes  S_a, \psi)  = \gamma(s, \Sigma \otimes \rho_a, \psi)  \]
 for any representation $\Sigma$ of $WD_F$, we deduce that the RHS of (\ref{E:mu}) is equal to the same expression with $\phi$ replaced by $\mathcal{L}(\pi)$, as desired. 
 \vskip 10pt

  \subsection{\bf The LLC}
 In view of the above discussion and using the Langlands classification theorem, we obtain:
 \vskip 5pt
 
  \begin{thm}  \label{T:L2}
 Assume Working Hypothesis I (which is not needed when $H_F$ is a unitary group).   There is a  map
\[ \mathcal{L}:  \{\text{irreducible  smooth representations of $H_F$} \}  \longrightarrow 
\{ \text{L-parameters $W\!D_F  \longrightarrow {^L}H_F$} \}, \]
satisfying the following properties:
\vskip 5pt

\begin{itemize}
\item[(i)]  Writing $\phi_\pi = \mathcal{L}(\pi)$ for the corresponding Langlands parameter of a representation $\pi$, we have:
\[
\text{ $\pi$ is a discrete series representation $\Longleftrightarrow$ $\phi_{\pi}$ is a discrete series L-parameter,} \]
\[  \text{ $\pi$ is a tempered representation $\Longleftrightarrow$ $\phi_{\pi}$ is a tempered L-parameter.} \]

\item[(ii)] The map $\mathcal{L}$ is compatible with the Langlands classification theorem. More precisely, suppose $\pi$ is the unique Langlands quotient of a standard module ${\rm Ind}_{P_F}^{H_F} \tau$ where $P_F = M_F N_F$ is a parabolic subgroup and $\tau$ is an essentially tempered representation of the Levi factor $M_F$. Then
$ \phi_{\pi}$ is given by the composite
\[
\begin{CD}
  W\!D_F  @>\phi_{\tau}>>   {^L}M_F @>>> {^L}H_F  \end{CD} \]
where the first arrow is the L-parameter $\phi_{\tau}$ of $\tau$.

\vskip 5pt

\item[(iii)] Suppose $\pi$ is an irreducible generic representation of $H_F$. Then
\[   L(s,\pi \times \tau) =   L(s,\phi_\pi \otimes \phi_{\tau}) \quad \text{ and } \quad \varepsilon(s,\pi \times \tau,\psi) = \varepsilon(s,\phi_\pi \otimes \phi_\tau,\psi)  \] 
where $\tau$ is any irreducible representation of $\GL_r(F)$ (for any $r$) with L-parameter $\phi_{\tau}$. 
\vskip 5pt

\item[(iv)] For any $\pi$, one has
\[  \mu(s, \pi \times \tau, \psi) =   \gamma(s,\phi_\pi^{\vee} \otimes \phi_{\tau}, \psi) \cdot  \gamma(-s,\phi_\pi \otimes \phi_{\tau}^{\vee},\overline{\psi}) \cdot  \gamma (2s,R  \circ \phi_{\tau},\psi) \cdot \gamma(-2s, R^{\vee} \circ \phi_{\tau},\overline{\psi}).  \] 
\end{itemize}
\vskip 5pt

Moreover, $\mathcal{L}$ is characterized by properties  (i), (ii), (iii) and (iv).
 \end{thm}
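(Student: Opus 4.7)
The plan is to extend $\mathcal{L}$ in three stages: from supercuspidal representations to discrete series, from discrete series to tempered representations, and then to all irreducibles via the Langlands classification. Most of the conceptual work has already been laid out in the discussion preceding the theorem; what remains is to assemble the pieces and verify the four properties.

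First, for a discrete series representation $\pi$ of $H_F$, the Moeglin--Tadi\'c classification (which is now unconditional for classical groups over $F$ by Theorem \ref{T:redu}, verifying assumption (BA) in \cite{MT}) expresses $\pi$ uniquely as a subquotient of an induced representation of the form displayed in (\ref{E:MT}), where $\pi_0$ is supercuspidal on a smaller classical group of the same type. I define $\mathcal{L}(\pi)$ by the formula (\ref{A:para}), using $\mathcal{L}(\pi_0)$ from Theorem \ref{T:L} together with the local Langlands correspondence for $\GL_n$ of \cite{LRS}. The argument in \cite{M, MT} shows this is a discrete series L-parameter (i.e.\ multiplicity-free and of the right sign). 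Property (iv) for $\pi$ then follows from the comparison of (\ref{A:para}) and (\ref{A:para2}) already carried out in the excerpt: multiplicativity of Plancherel measures reduces the identity to the supercuspidal case (Theorem \ref{T:L}(iii)), after which the identity $\gamma(s, \Sigma \otimes S_a, \psi) = \gamma(s, \Sigma \otimes \rho_a, \psi)$ finishes the job.

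Next, for a tempered $\pi$, one writes $\pi$ as a direct summand of ${\rm Ind}_{P_F}^{H_F} \sigma$ for a discrete series $\sigma$ on the Levi factor $M_F = \prod_i \GL_{n_i}(E) \times H_F'$ of a parabolic $P_F$. The L-parameter of $\sigma$ is defined componentwise using LLC for general linear groups and the just-constructed $\mathcal{L}$ for $H_F'$, and $\mathcal{L}(\pi)$ is defined to be the composite $WD_F \xrightarrow{\phi_\sigma} {}^LM_F \to {}^LH_F$. Well-definedness (independence of the choice of inducing datum) follows from the fact that $(M_F,\sigma)$ is determined up to $H_F$-conjugacy by $\pi$. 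For a general irreducible $\pi$, one applies the Langlands classification: $\pi$ is the unique Langlands quotient of a standard module ${\rm Ind}_{P_F}^{H_F} \tau$ with $\tau$ essentially tempered, and $\mathcal{L}(\pi)$ is defined by the composite in property (ii). This construction builds in property (ii) by definition and property (i) by the Moeglin--Tadi\'c construction.

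For property (iii), the case of generic tempered $\pi$ follows from the supercuspidal case (Theorem \ref{T:L}(ii)) by multiplicativity of Langlands--Shahidi L- and $\varepsilon$-factors \cite{L2} and the analogous multiplicativity on the Galois side, since any irreducible generic tempered representation is a constituent of a representation induced from a generic discrete series, and discrete series are obtained from supercuspidals via the Moeglin--Tadi\'c recipe. The passage to general irreducible generic $\pi$ uses the Langlands classification and the standard module conjecture, which holds in the classical-group setting by \cite{L2}. Property (iv) for non-discrete-series $\pi$ follows again by multiplicativity of Plancherel measures. Finally, the characterization by (i)--(iv) is standard: (i) and (ii) reduce the problem to discrete series; among discrete series, the Plancherel-measure identity (iv) together with the $\gamma$-factor argument of \cite[Lemma 12.3]{GS} (already invoked in Proposition \ref{P:indep}) pins down $\phi_\pi$ uniquely. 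The main obstacle in this program is not conceptual but bookkeeping: verifying that the Moeglin--Tadi\'c parametrization of discrete series matches the Plancherel-measure formula on the nose, which is precisely the content of the comparison between (\ref{A:para}) and (\ref{A:para2}) sketched in the text.
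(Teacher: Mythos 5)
Your proposal is correct and follows essentially the same route as the paper: the paper obtains Theorem \ref{T:L2} precisely by extending $\mathcal{L}$ to discrete series via the Moeglin--Tadi\'c recipe (\ref{A:para}) (made unconditional by Theorem \ref{T:redu}), verifying property (iv) through the comparison of (\ref{A:para}) and (\ref{A:para2}), and then invoking the Langlands classification. Your write-up merely makes explicit the intermediate tempered step and the characterization argument via \cite[Lemma 12.3]{GS}, both of which are implicit in the paper's discussion.
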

 \vskip 10pt

 \vskip 10pt
 
 \subsection{\bf Some questions.}\label{questions}
 Naturally, we are led to ask the following questions:
 \vskip 5pt
 
 \begin{itemize}
 \item Is the map $\mathcal{L}$ surjective?
 
 \item Are the fibres of $\mathcal{L}$ finite?  
 
 \item If $\phi$ is a tempered L-parameter, is there a generic representation in its fiber under $\mathcal{L}$? This is the tempered L-packet conjecture of Shahidi.
 
 \item Is there a refined parametrisation of the fibres of $\mathcal{L}$ in terms of characters of a certain component group? 
 \end{itemize}
 \vskip 10pt
 
  Note that if one has the local descent results in positive characteristic, Working Hypothesis I would not be needed in Theorem \ref{T:L2} and the surjectivity of $\mathcal{L}$ would also follow. 
 \vskip 10pt
  
 We remark that in a recent preprint \cite{GV}, R. Ganapathy and S. Varma have used the Deligne-Kazhdan theory of close local fields to deduce the local Langlands correspondence for split classical groups in characteristic $p >0$ from the case of characteristic $0$.  Their map satisfies the properties of the above theorem and thus agree with our map $\mathcal{L}$; moreover, the above questions all have affirmative answers.
  We should also mention that in an ongoing work, A. Genestier and V. Lafforgue are trying to establish the local Langlands correspondence by a local analog of \cite{La}, and in particular to obtain the map $\mathcal{L}$ as in the theorem, for a general reductive group $G$ in characteristic $p>0$. 
  Their more geometric methods  should complement and perhaps go further than those of this paper.
\vskip 10pt

 \section{\bf Application of the Trace Formula}  \label{S:LLC2}
 We continue to assume that $H_F$ is a quasi-spit classical group over the local function field $F$.
 In this section, we consider an alternative way of extending the map $\mathcal{L}$ of Theorem \ref{T:L} from  supercuspidal representations to discrete series representations. Instead of appealing to the deep results of Moeglin-Tadi\'c, we shall use a global-to-local argument similar to the construction of $\mathcal{L}$ in the  supercuspidal case.
 \vskip 5pt

 The construction of the map $\mathcal{L}$ for supercuspidal representations  would also apply to discrete series representations if one can globalize discrete series representations. The Poincar\'e series argument used in our proof of Theorem \ref{T:main} only works for supercuspidal representations. However, in the characteristic 0 situation, one can use the Arthur-Selberg trace formula to globalize discrete series representations. Indeed, it suffices to have a weak version of ``limit multiplicity formula", such as that shown by Clozel in \cite{C}. Unfortunately, in positive characteristic, the local theory of invariant harmonic analysis and the global theory of the trace formula are not as fully developed as in the characteristic 0 case. As such, we shall make an additional working hypothesis (a simple trace formula) which we shall describe in a moment.
 
 \vskip 5pt
 
 \subsection{\bf Pseudo-Coefficients.}
 In order to detect non-supercuspidal discrete series representations using the trace formula, we need the notion of pseudo-coefficients.
 It has been shown by Henniart-Lemaire \cite{HLe} that
   any irreducible discrete series representation $\pi$ of $H_F$ has a pseudo-coefficient $f_{\pi}$. More precisely, $f_{\pi}  \in C^{\infty}_c(H_F)$ has the property that
  \[ {\rm Tr}  \,  \sigma(f_\pi)  = \begin{cases}
  1 \text{ if $\sigma \cong \pi$,} \\
  0, \text{ for any irreducible tempered representation $\sigma \ncong \pi$.} \end{cases} \]
 It follows that such an $f_{\pi}$ satisfies the following additional properties:
 \begin{itemize}
  \item the orbital integral of $f_{\pi}$ vanishes on all non-elliptic regular semisimple elements;
  
  \item $f_{\pi}(1)$ is equal to the formal degree of $\pi$ (with respect to an appropriate Haar measure) and thus is nonzero;
  
  \item for any standard module $I_P(\tau) = {\rm Ind}_{P(F)}^{H_F(F)} \tau$ with $P \neq H_F$ a proper parabolic subgroup, ${\rm Tr}\,( I_P(\tau)(f_{\pi}))  = 0$. 
   \end{itemize}
   \vskip 5pt

   If $\pi$ is supercuspidal, one can simply take $f_{\pi}$ to be a matrix coefficient of $\pi$ with $f_{\pi}(1)  \ne 0$. 
 Then such a pseudo-coefficient is a very cuspidal  function in the sense of  \cite[Pg. 133, Definition 5.1.4]{Lau2}. More precisely, it satisfies: for any proper parabolic subgroup $P = MN \subset H_F$ and a special maximal compact subgroup $K$ in good relative position to $P$,  
 \[   f_{\pi}^P  (m) := \delta_P(m)^{1/2} \cdot \int_{N(F)} \int_K  f_{\pi}(k^{-1} mn k)  \, dk \, dn  = 0 \]
 as a function on $M(F)$.  
 
 \vskip 5pt
 
   We also note the following lemma:
  \vskip 5pt
  
 \begin{lemma}  \label{L:supp}
 Let $\pi$ be a discrete series representation of $H_F$ with pseudo-coefficient $f_{\pi}$. If $\sigma$ is an irreducible representation of $H_F$ such that ${\rm Tr} \, \sigma(f_{\pi}) \ne 0$, then $\sigma$ and $\pi$ have the same supercuspidal support.  In particular, for any irreducible representation $\tau$ of $\GL_n(F)$, one has an equality of Plancherel measures:
 \[  \mu(s, \pi \times \tau, \psi)  =  \mu(s,\sigma \times \tau, \psi). \]
\end{lemma}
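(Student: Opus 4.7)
The plan is to invert the Langlands classification in order to express the character of any irreducible representation as a finite $\Z$-linear combination of characters of standard modules, and then exploit the three defining properties of $f_\pi$ to reduce the computation to its trace on $\pi$. More precisely, for each irreducible $\sigma$ of $H_F$ let $I_\sigma$ denote the standard module whose unique Langlands quotient is $\sigma$; so $I_\sigma = \sigma$ when $\sigma$ is tempered, and otherwise $I_\sigma$ is induced from a proper parabolic. The Jordan--H\"older multiplicity matrix $M = ([I_\sigma : \sigma'])_{\sigma,\sigma'}$ is unitriangular with respect to the Langlands partial order, hence admits an integral inverse, yielding the character identity
\[
 \chi_\sigma \;=\; \sum_{\sigma'} (M^{-1})_{\sigma, \sigma'}\, \chi_{I_{\sigma'}}
\]
as invariant distributions on $H_F$.

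Next, I would evaluate this identity against $f_\pi$. By the third property of the pseudo-coefficient, ${\rm Tr}\,I_{\sigma'}(f_\pi)$ vanishes whenever $I_{\sigma'}$ is induced from a proper parabolic, so only the tempered $\sigma'$ contribute; for these, $I_{\sigma'} = \sigma'$ and ${\rm Tr}\,\sigma'(f_\pi) = \delta_{\sigma', \pi}$. One concludes
\[
 {\rm Tr}\,\sigma(f_\pi) \;=\; (M^{-1})_{\sigma, \pi}.
\]

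The crucial observation is then that parabolic induction preserves supercuspidal support: every Jordan--H\"older constituent of $I_\sigma$ shares the supercuspidal support of $\sigma$, since this support is read off from the inducing data of $I_\sigma$ itself. Thus $M$, and therefore $M^{-1}$, is block-diagonal with respect to supercuspidal support, so $(M^{-1})_{\sigma,\pi}\neq 0$ forces $\sigma$ and $\pi$ to have the same supercuspidal support, which proves the first assertion. The equality of Plancherel measures is then immediate from the multiplicativity of the Plancherel measure (cf.\ \cite[Appendix B]{GI}), since $\mu(s, \pi \times \tau, \psi)$ depends on $\pi$ only through its supercuspidal support.

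The main obstacle I anticipate is to secure, in positive characteristic, the Langlands classification together with the character-theoretic identity for standard modules (additivity of the trace distribution on Jordan--H\"older constituents) that underlies the inversion of $M$; both should be available in the admissibility and trace formalism already invoked by Henniart--Lemaire \cite{HLe} in their construction of the pseudo-coefficient $f_\pi$ itself.
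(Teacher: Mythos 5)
Your proposal is correct and follows essentially the same route as the paper: the paper likewise writes $\sigma$ as a finite $\Z$-linear combination of standard modules, uses the vanishing of ${\rm Tr}\,f_\pi$ on properly induced standard modules and on tempered representations other than $\pi$ to force $\pi$ itself to appear in that combination, and concludes via the fact that all constituents of a standard module share its supercuspidal support. Your matrix formulation (unitriangularity and block-diagonality of $M$ with respect to supercuspidal support) is just a more explicit packaging of the same argument, and your closing appeal to multiplicativity of the Plancherel measure matches the paper's.
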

\vskip 5pt

\begin{proof}
If $\sigma \ncong \pi$, then $\sigma$ is nontempered and can be written as a finite $\Z$-linear combination of standard modules, all of whose irreducible subquotients have the same supercuspidal support.  Since the trace of $f_{\pi}$ vanishes on any standard module induced from a proper parabolic subgroup as well as any tempered representation different from $\pi$, one of the standard modules which intervene in the above linear combination must be $\pi$. Thus $\sigma$ has the same supercuspidal support as $\pi$.  
\end{proof}

   \vskip 10pt
 
 \subsection{\bf Another working hypothesis.}
  Now let $k$ be a global function field and $H_k$ a connected semisimple group over $k$ (for simplicity). 
  We shall formulate another working hypothesis which is basically a simple trace formula.
  \vskip 5pt
  
  \noindent{\bf \underline{Working Hypothesis II}}  
    \vskip 5pt
    
    Let $T$ be a nonempty finite set of places of $k$. Suppose that  $f = \prod_v f_v \in C^{\infty}_c(H_k(\A))$ is  such that for $v \in T$,   $f_{v}$ is a matrix coefficient of a supercuspidal representation $\pi_{v}$ with $f_{v}(1)  =1$. For such a test function $f$, consider the kernel function 
 \[  K_f(x,y) = \sum_{\gamma \in H_k(k)} f( x^{-1} \gamma y)  \]
 for the right translation action $R(f)$ on $L^2(H(k) \backslash H(\A))$. Then $K_f(x,y)$ is integrable on the diagonal and (at least when $T$ is sufficiently large) one has a spectral and geometric expansion:
 \[     \sum_{\text{cuspidal $\Pi$}}  Tr \, \Pi(f)  = \int_{H(k) \backslash H(\A)} K_f(x,x) \, dx  =   \sum_{ \{\gamma\} }  a_{\gamma} O_{\gamma}(f)  \]
 where the sum over $\gamma$ runs over conjugacy classes of elliptic semisimple elements in $H_k(k)$, $a_{\gamma} \ne 0$ are some nonzero constants and $O_{\gamma}(f)$ is the orbital integral of $f$ over the conjugacy class of $\gamma$.  
  \vskip 10pt

  In characteristic $0$, the hypothesis follows from the work of Arthur. For global function fields, the hypothesis was established by Laumon for $H = \GL_n$ in \cite[Chapters 9 and 10]{Lau3} and a variant was used for general $H$ by Gross in \cite[\S 5]{Gr}. One certainly hopes that Laumon's proof  would extend to general groups $H$. This is not the right place to verify this, but let us make a few comments. The proof of the integrality of $K_f(x,x)$ is given in \cite[Theorem 10.2 and \S10.4]{Lau3}: one would imagine that essentially the same proof should work for general groups $H$. The fact that   only cuspidal representations intervene on the spectral side is because we have used the matrix coefficient of a supercuspidal representation at places in $T$. The main difficulty, due to the non-perfectness of $k$, is the geometric expansion which is dealt with in \cite[\S 10.6-10.9]{Lau3}; for example, one would need the important \cite[Theorem 10.7.6]{Lau3}. The details of this geometric expansion need to be verified for general $H$.

  \subsection{\bf Globalisation of discrete series.}
 Using the above Working Hypothesis II, we can demonstrate the following result, which is a weak version of a result of Clozel \cite{C} in characteristic $0$ (itself a weak version of the so-called ``limit multiplicity formula"):
 \vskip 5pt
 
 \begin{prop}
 Let $k$ be a global function field and $H_k$ a connected semisimple group over $k$ (for simplicity). Let $S \cup T$ be a disjoint union of  finite sets of places of $k$ with $S$ and $T$ nonempty and $T$ sufficiently large. Suppose we are given discrete series representations $\pi_v$ of $H_k(k_v)$ for each $v \in S$ and a supercuspidal representation $\pi_{v_1}$ of $H_k(k_{v_1})$ for all $v_1 \in T$. Then there exists a cuspidal representation $\Pi$ of $H_k(\A)$ such that 
 \begin{itemize}
 \item  for all $v_1 \in T$, $\Pi_{v_1} \cong \pi_{v_1}$;
 
  \item  for all $v \in S$, 
 \[  {\rm Tr} \, \Pi_v(f_{\pi_v})   \ne 0, \]
 where $f_{\pi}$ is a pseudo-coefficient for $\pi_v$. Hence, $\Pi_v$ and $\pi_v$ have the same supercuspidal support.
 \end{itemize}
   \end{prop}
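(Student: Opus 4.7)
The plan is to apply Working Hypothesis II to a carefully chosen test function and extract the cuspidal $\Pi$ from the spectral side. I would take
\[ f = \prod_v f_v \in C^{\infty}_c(H_k(\A)), \]
where at each $v_1 \in T$, $f_{v_1}$ is a matrix coefficient of the supercuspidal $\pi_{v_1}$ normalized so that $f_{v_1}(1) = 1$ (which by Schur orthogonality forces $\mathrm{Tr}\,\pi_{v_1}(f_{v_1}) \ne 0$); at each $v \in S$, $f_v$ is the pseudo-coefficient $f_{\pi_v}$ supplied by \cite{HLe}; at an auxiliary place $v_0 \notin S \cup T$, $f_{v_0}$ is the characteristic function of a small open compact neighborhood $U_{v_0}$ of $1 \in H_k(k_{v_0})$ to be shrunk below; and at all remaining $v$, $f_v$ is the characteristic function of a hyperspecial maximal compact $K_v$.

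With $T$ chosen large enough, Working Hypothesis II yields
\[  \sum_{\text{cuspidal } \Pi} \mathrm{Tr}\,\Pi(f) \;=\; \sum_{\{\gamma\}} a_{\gamma}\, O_{\gamma}(f), \]
the right side summing over elliptic semisimple $k$-conjugacy classes. I would then argue that the geometric side reduces to the identity contribution. The matrix coefficients at $v_1 \in T$ are very cuspidal in the sense of \cite{Lau2}, and the pseudo-coefficients at $v \in S$ have vanishing orbital integrals on non-elliptic regular semisimple classes; together with $H_k(k)$ being discrete in $H_k(\A)$ (as $H_k$ is semisimple), this shows that only finitely many classes $\{\gamma\}$ meet $\prod_v \mathrm{supp}(f_v)$. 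For each such $\gamma \ne 1$, the $H_k(k_{v_0})$-conjugacy class of the semisimple element $\gamma$ is closed in $H_k(k_{v_0})$ and avoids $1$, so shrinking $U_{v_0}$ to lie in the complement of the union of these finitely many classes forces $O_{\gamma}(f_{v_0}) = 0$ for each $\gamma \ne 1$ in the finite list. The geometric side then collapses to
\[ a_1 \cdot f(1) \;=\; a_1 \cdot \mathrm{vol}(U_{v_0}) \cdot \prod_{v \in S} f_{\pi_v}(1), \]
which is nonzero because $f_{\pi_v}(1)$ equals the formal degree of $\pi_v$.

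The spectral side must therefore be nonzero, so some cuspidal $\Pi$ satisfies $\prod_v \mathrm{Tr}\,\Pi_v(f_v) \ne 0$. At each $v_1 \in T$, matrix-coefficient orthogonality for supercuspidals forces $\Pi_{v_1} \cong \pi_{v_1}$; at each $v \in S$, $\mathrm{Tr}\,\Pi_v(f_{\pi_v}) \ne 0$ is precisely the conclusion required, and Lemma \ref{L:supp} then identifies the supercuspidal support of $\Pi_v$ with that of $\pi_v$. The substantive obstacle lies not in the logical structure of the proof but in verifying that Working Hypothesis II applies as stated: namely, the absolute convergence of the geometric expansion and the restriction of that expansion to elliptic semisimple classes for general semisimple $H_k$ over a positive-characteristic function field. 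Over $\mathrm{GL}_n$ this is carried out in \cite[Ch.~10]{Lau3}, and in particular \cite[Thm.~10.7.6]{Lau3} is a central ingredient; extending it to general $H_k$ is the main input one has to grant, after which the argument proceeds as above in the style of Clozel's characteristic-zero limit multiplicity formula.
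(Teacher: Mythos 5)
Your proposal is correct and follows essentially the same route as the paper: the same test function (matrix coefficients at $T$, pseudo-coefficients at $S$, a shrinking characteristic function at one auxiliary place, unramified elsewhere), the same collapse of the geometric side of the simple trace formula of Working Hypothesis II to the identity contribution, and the same extraction of $\Pi$ from the spectral side. The extra detail you supply (closedness of semisimple conjugacy classes at the auxiliary place, orthogonality at the places of $T$) only makes explicit what the paper leaves implicit.
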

 
 Observe that for $v \in S$, we do not assert, nor do we know, that $\Pi_v \cong \pi_v$. Thus, $\Pi$ is not exactly a globalization of $\otimes_{v \in S} \pi_v$.  However,   Lemma \ref{L:supp} implies  that $\Pi_v$ and $\pi_v$ have the same supercuspidal support for $v \in S$, so one might call $\Pi$ a ``pseudo-globalization" of $\otimes_{v \in S} \pi_v$ (obtained as a consequence of using a pseudo-coefficient). 
 Moreover, we do not care about the local components of $\Pi$ outside the set $S \cup T$ (because we have the  stability of Plancherel measures as in Corollary~\ref{plancherelstability}).
 \vskip 5pt
 
 \begin{proof}
  To apply the trace formula supplied by Working Hypothesis II, we need to specify a test function 
 $f = \prod_v f_v \in C^{\infty}_c(H_k(\A))$:
 \vskip 5pt
 
 \begin{itemize}
 \item for $v_1 \in T$, we take $f_{v_1}$ to be a matrix coefficient of $\pi_{v_1}$ with $f_{v_1}(1)  =1$;
 \item for $v \in S$, we take $f_v$ to be a pseudo-coefficient $f_{\pi_v}$ for the discrete series representation $\pi_v$;
 \item for some fixed $v_2 \notin S \cup T$, we take $f_{v_2}$ to be the characteristic function of an open compact subgroup $J \subset H_k(k_{v_2})$;
 \item for all other $v$, we take $f_v$ to be the characteristic function of a (hyper)special maximal compact subgroup. 
 \end{itemize}
 \vskip 5pt
 
 Now we apply the trace formula in Working Hypothesis II to this test function $f$. 
  On the geometric side, the sum of elliptic semisimple orbital integrals 
 \[    \sum_{ \{\gamma\} }  a_{\gamma} O_{\gamma}(f)  \]
   is a finite sum. Thus, if we shrink the open compact subgroup $J \subset H_k(k_{v_2})$  to a sufficiently small neighbourhood of $1$, we see that the only term which contributes to the geometric side of the trace formula is the one given by $\gamma  =1$. Then the geometric side is equal to
 \[ a_1 \cdot  f(1)  \ne 0. \]
Thus, invoking the spectral side, we conclude that for this particular $f$, there exists a cuspidal representation $\Pi$ such that
\[  {\rm Tr} \, \Pi(f)  = \prod_v  {\rm Tr} \, \Pi_v (f_v)  \ne 0. \]
By the properties of the local test functions $f_v$,  we see that this $\Pi$ will satisfy the requirements of the proposition. 
  \end{proof}
 \vskip 5pt
 We have assumed that $H_k$ is semisimple for simplicity. The case of reductive $H_k$ with anisotropic center is similarly handled by working with a fixed central character, with some care needed in globalizing the central character, as we discussed in \S \ref{S:corollary}; we omit the details here.  
 
 \vskip 10pt

 \subsection{\bf Definition of $\mathcal{L}$.}
 Now we can define the extension of the map $\mathcal{L}$ to all discrete series representations. Given the local field $F$ and a classical group $H_F$ over $F$, choose a global field $k$ such that $k_{v_0} \cong F$ and a classical group $H_k$ over $k$ such that $H_{k, v_0} \cong H_F$.
 We consider a finite set $S \cup T$ of places of $k$ with $S = \{ v_0 \}$ and $T$ sufficiently large.
 Given a discrete series representation $\pi$ of $H_F$, we apply the proposition with
 \vskip 5pt
 
 \begin{itemize}
 \item  $\pi_{v_0}  = \pi$; 
 \item  for all $v_1 \in T$, $\pi_{v_1} = $ the supercuspidal representation $\pi_1$ with L-parameter $\mathcal{L}(\pi_1)$ equal to the L-parameter $\phi_1$ in Working Hypothesis I.
 \end{itemize}
 Then the proposition provides a cuspidal $\Pi$ such that
 \[  \Pi_{v_1} = \pi_1 \,  \text{for all $v_1 \in T$,} \quad \text{and}  \quad {\rm Tr} \, \Pi_{v_0}(f_{\pi})  \ne 0. \]
 Note that we do not know whether $\Pi_{v_0}$ is isomorphic to $\pi$. However, if we believe in various standard conjectures in the theory of automorphic forms, we would expect that $\Pi_{v_0}$ is tempered, and thus is isomorphic to $\pi$.   
 \vskip 10pt

 We now consider the Galois representation $\rho_{\Pi, l}$ associated to $\Pi$ by V. Lafforgue, as well as the Frobenius-semisimplification of its local component $\rho_{\Pi, v_0, l}$. We have:
 \vskip 5pt
 
 \begin{itemize}
 \item In view of Lemma \ref{L:supp} and the properties of $\Pi$, the statements and proof of  Proposition \ref{P:deligne} continue to apply to $\rho_{\Pi, l}$. 
 The main point is that one has
 \begin{align}
  \mu(s,\pi\times \tau , \psi)  &= \mu(s,\Pi_{v_0} \times \tau, \psi) \notag \\
 &=   \gamma(s, \rho_{\Pi, v_0, l} \times \phi_{\tau}, \psi)  \cdot \gamma(-s,\rho_{\Pi, v_0, l}^{\vee} \times \phi^{\vee}_{\tau}, \psi) \cdot \gamma(2s, r_2 \circ \phi_{\tau}, \psi) \cdot \gamma(-2s, r_2^{\vee} \circ \phi_{\tau}, \psi). \notag \end{align}
 Here the first equality follows by Lemma \ref{L:supp} and the second follows by Corollary \ref{C:plan}.
  Thus, the proof of Proposition \ref{P:deligne}  shows that $\rho_{\Pi, v_0, l}$ corresponds to a discrete series $L$-parameter of $H_F$.
 
 \vskip 5pt
 
 \item Thanks to Lemma \ref{L:supp} again,  we deduce by the proof of Proposition \ref{P:indep} that $\rho_{\Pi, v_0, l}$ is independent of the choice of the prime $l$ or the globalization $\Pi$ used (as long as $\rho_{\Pi, l}$ is pure of weight $0$ and $\Pi_{v_0}$ has the same supercuspidal support as $\pi$). 
 \end{itemize}
 \vskip 5pt
 
 In view of the above, we may set 
 \[  \mathcal{L}(\pi)  = \text{the Frobenius-semisimplification of $\rho_{\Pi, v_0, l}$} \]
 where $\Pi$ is a cuspidal representation constructed as above. In this way, we have extended the map $\mathcal{L}$ of Theorem \ref{T:L} (except for Property (ii)) to all discrete series representations. Applying the Langlands classification theorem,  we then recover Theorem \ref{T:L2} (except for Property (iii)), albeit under the additional Working Hypothesis II. We do not get Property (iii) (for generic discrete series representations) this way because in using the trace formula to globalize,  we could not ensure that the globalization of a generic representation is globally generic.  
 \vskip 5pt
 We hope that the application discussed in this section will provide some impetus for a systematic development of the local theory of invariant harmonic analysis and the global theory of the Arthur-Selberg trace formula for general reductive groups over function fields of  characteristic $p>0$.

 \vskip 15pt

\end{document}